\newtheorem{thm}{Theorem}[section]
\newtheorem{prop}[thm]{Proposition}
\newtheorem{lem}[thm]{Lemma}
\newtheorem{conj}[thm]{Conjecture} 
\theoremstyle{definition}
\newtheorem{definition}[thm]{Definition}
\newtheorem{remark}[thm]{Remark}
\newtheorem{construction}[thm]{Construction}
\numberwithin{equation}{section}
\numberwithin{figure}{section}
\DeclareMathOperator{\cl}{cl}
\DeclareMathOperator{\Hom}{Hom}
\DeclareMathOperator{\Ima}{Im}
\DeclareMathOperator{\Region}{\mathbf{T}}
\DeclareMathOperator{\Int}{Int}
\DeclarePairedDelimiter\abs{\lvert}{\rvert}
\newcommand{\mut}{\operatorname{mut}}
\DeclareMathOperator{\GL}{{GL}}
\DeclareMathOperator{\SL}{{SL}}
\DeclareMathOperator{\Ann}{Ann}
\DeclareMathOperator{\Cone}{cone}
\DeclareMathOperator{\Diag}{D}
\newcommand{\cB}{\mathcal{B}}
\newcommand{\ba}{{\mathbf{a}}}
\newcommand{\cT}{\mathbf{T}}
\newcommand{\Q}{{\mathbb{Q}}}
\newcommand{\R}{{\mathbb{R}}}
\renewcommand{\P}{{\mathbb{P}}}
\newcommand{\PP}{{\mathbb{P}}}
\newcommand{\Z}{{\mathbb{Z}}}
\newcommand{\C}{\mathbb{C}}
\newcommand{\Chambers}{\operatorname{Chambers}}
\newcommand{\Rays}{\operatorname{Rays}}
\renewcommand{\emptyset}{\varnothing}
\newcommand{\conv}[1]{\operatorname{conv}\left\{{#1}\right\}}
\newcommand{\V}[1]{\operatorname{\mathcal{V}}\left({#1}\right)}
\newcommand{\fD}{\mathfrak{D}}
\newcommand{\fu}{\mathfrak{u}}
\renewcommand{\d}{\mathfrak{d}}
\newcommand{\fr}{\mathfrak{r}}
\newcommand{\fe}{\mathfrak{e}}
\newcommand{\field}{\mathrm{k}}
\newcommand{\iu}{\imath}
\newcommand{\sS}{\mathscr{S}}
\newcommand{\sP}{\mathscr{P}}
\newcommand{\Markov}{\mathcal{G}}
\begin{document}
\author[T.~Prince]{Thomas Prince}
\address{Mathematical Institute\\University of Oxford\\Woodstock Road\\Oxford\\OX2 6GG\\UK}
\email{thomas.prince@magd.ox.ac.uk}

\title[Tropical Superpotential]{The Tropical Superpotential For $\P^2$}
\begin{abstract}
We present an extended worked example of the computation of the tropical superpotential considered by Carl--Pumperla--Siebert. In particular we consider an affine manifold associated to the complement of a non-singular genus one plane curve, and calculate the wall and chamber decomposition determined by the Gross--Siebert algorithm. Using the results of Carl--Pumperla--Siebert we determine the tropical superpotential, via broken line counts, in every chamber of this decomposition. The superpotential defines a Laurent polynomial in every chamber, which we demonstrate to be identical to the Laurent polynomials predicted by Coates--Corti--Galkin--Golyshev--Kaspzryk to be mirror to $\P^2$.
\end{abstract}

\maketitle

\section{Introduction}
\label{sec:introduction}

The phenomenon of mirror symmetry famously identifies pairs of dual Calabi--Yau manifolds which are related by a duality of $N=2$ superconformal sigma models with Calabi--Yau target spaces. The phenomenon of mirror symmetry extends to some non-Calabi--Yau cases, in particular to the case of \emph{Fano manifolds}~\cite{Givental:Homological,Givental:Toda,Givental:toric,Hori--Vafa:00}. In this setting mirror symmetry is expected to relate a pair $(X,D)$ -- consisting of a Fano manifold $X$ and a divisor $D \in |-K_X|$ -- with a \emph{Landau--Ginzburg model} $(\breve{X},W)$, consisting of a complex manifold $\breve{X}$ and a holomorphic function $W$, referred to as the \emph{superpotential}.

The prototypical example of mirror symmetry for Fano manifolds is the case $(\P^2,D)$ where $D$ is the toric boundary of $\P^2$. The mirror Landau--Ginzburg model in this case is the pair
\[
\big({\C^\star}^2,{\textstyle x+y+\frac{1}{xy}}\big).
\]
Many mathematical formulations of mirror symmetry can be proved in this setting; these include Homological Mirror Symmetry~\cite{Seidel:00,Seidel:01,AKO1}, the Strominger--Yau--Zaslow (SYZ) conjecture~\cite{ChoOh,CL10}, as well as the original formulation of Givental and Hori--Vafa~\cite{Givental:toric,Hori--Vafa:00}. However, even in the case of $(\P^2,E)$ where $E$ is a non-singular genus one curve -- the subject of this article -- the SYZ formulation of mirror symmetry is highly non-trivial.

In general, given a Fano manifold $X$, the SYZ Conjecture~\cite{SYZ} predicts -- see for example the surveys~\cite{Aur1,Aur2} by Auroux -- that the variety $\breve{X}$ is a moduli space of pairs $(L,\nabla)$, where $L$ is a special Lagrangian torus in $X$ and $\nabla$ is a $U(1)$ connection on $L$. Moreover, following \cite{M98}, the moduli space of special Lagrangians on $X$ is a manifold which carries a pair of \emph{integral affine structures}; see Definition~\ref{def:affine_manifold}. In this context the holomorphic function $W$ is predicted to be a count of Maslov index two holomorphic discs in $X$ whose boundary in contained in $L$. 

\begin{figure}
	\includegraphics{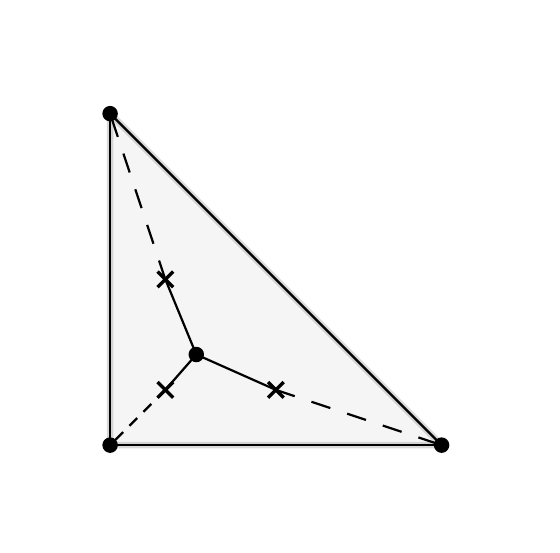}
	\caption{Intersection complex for a toric degeneration of $(\mathbb{P}^2,E)$}
	\label{fig:intersection_complex}
\end{figure}

Fundamental work of Kontsevich--Soibelman~\cite{KS06} and Gross--Siebert~\cite{GS1} exploits this connection between Mirror Symmetry and affine geometry: the authors directly construct a degeneration of a mirror variety via a combinatorial construction (scattering) on an affine manifold. Returning to the case of $\P^2$, one can construct an affine manifold $B$ by `smoothing the corners' of the moment polytope as shown in Figure~\ref{fig:intersection_complex}. This operation appears in \cite{CPS11,Ruddat:LG}, and is explored in some detail in \cite{P15}. There is also an analogue of the superpotential $W$ -- the \emph{tropical superpotential} -- introduced in~\cite{CPS11} in which holomorphic disc counting is replaced by counting tropical discs or \emph{broken lines} in the Legendre dual affine manifold $\breve{B}_{\P^2}$. This builds on the general correspondence between tropical and holomorphic curves established by Mikhalkin~\cite{Mikhalkin03,Mikhalkin05}, and Nishinou--Siebert~\cite{Nishinou--Siebert}.

One essential feature of the Gross--Siebert algorithm is that it encodes the data used to define a degeneration in terms of walls -- or rays -- of a certain wall and chamber decomposition of an affine manifold, created by an order-by-order scattering process. In this article we determine the wall and chamber structure on $\breve{B}_{\P^2}$ produced by the Gross--Siebert algorithm and -- using results of \cite{CPS11} -- determine the tropical superpotential in every chamber. In particular we study a certain collection of rays called a \emph{compatible structure}~\cite[Definition~$6.27$]{TropGeom} $\sS$ on $\breve{B}_{\P^2}$. We recall~\cite[Definition~$6.22$]{TropGeom} that, while the set $\sS$ is infinite, it has a filtration by finite subsets $\sS[k]$, for $k \in \Z_{\geq 0}$. The wall and chamber structure determined by $\sS$ is recorded in a (non-unique) collection of polyhedral subdivisions $\sP_k$ of $\breve{B}_{\P^2}$ for $k \in \Z_{\geq 0}$. The precise conditions the decompositions $\sP_k$ are required to satisfy are given in \cite[Definition~$6.24$]{TropGeom}. Following \cite[p.$276$]{TropGeom}, we let $\Chambers(\sS,k)$ denote the maximal cells of $\sP_k$ for each $k \in \Z_{\geq 0}$.

\begin{thm}
	\label{thm:superpotential}
	There is an increasing sequence of subsets $(V_k)^\infty_{k=0}$ of $\breve{B}_{\P^2}$ and choice of $\sP_k$ for each $k \in \Z_{\geq 0}$ such that, setting $V = \bigcup_{i \geq 0}{V_i}$:
	\begin{enumerate}
		\item For all $j \in \Z_{\geq k}$, the restriction of $\sP_j$ to $V_k$ is a constant union of chambers $\fu \in \Chambers(\sS,k)$.
		\item For each $\fu \in \Chambers(\sS,k)$ such that $\fu \subset V_k$, $\fu$ is related by a scale and translation in an affine chart to a Fano polytope $P_\fu$ whose spanning fan determines a toric variety to which $\P^2$ degenerates.
		\item\label{it:dense} Identifying each ray in $\sS$ with its support in $\breve{B}_{\P^2}$, rays in $\sS$ are dense in $\breve{B}_{\P^2}\setminus V$.
	\end{enumerate}
\end{thm}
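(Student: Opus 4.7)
The proof proceeds by explicit construction, running the Gross--Siebert algorithm on $\breve{B}_{\P^2}$ order by order. I would begin by fixing the initial scattering data: the affine structure on $\breve{B}_{\P^2}$ has singular locus consisting of the focus--focus singularities obtained by smoothing the three vertices of the moment polytope of $\P^2$, and the order-zero rays in $\sS[0]$ emanating from these singularities are determined by the log Gromov--Witten invariants of the central fiber of the associated toric degeneration. At each subsequent order $k$, the algorithm modifies $\sS[k-1]$ by adjoining finitely many new rays generated by consistency conditions at wall intersections; correspondingly, a suitable choice of $\sP_k$ refines $\sP_{k-1}$ only along these new rays.

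The central structural observation I would establish is that each chamber $\fu \in \Chambers(\sS,k)$ can be identified, via the integral affine coordinates on $\fu$ obtained by parallel transport across existing walls from the origin, with a Fano polytope $P_\fu$, and that crossing a wall of $\sS$ corresponds to a combinatorial \emph{mutation} of $P_\fu$ in the sense of Akhtar--Coates--Galkin--Golyshev--Kasprzyk. The chamber containing the origin corresponds to the standard reflexive polytope of $\P^2$; iterating mutations then identifies reachable chambers with the polytopes of weighted projective planes $\P(a^2,b^2,c^2)$ indexed by Markov triples $(a,b,c)$, each of which is a qG-degeneration of $\P^2$.

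With this identification in hand, I would define $V_k$ to be the union of those chambers $\fu \in \Chambers(\sS,k)$ whose interiors are disjoint from every ray in $\sS[j]\setminus\sS[k]$ for all $j > k$. The sequence $(V_k)_{k\geq 0}$ is then manifestly increasing, and the restriction of $\sP_j$ to $V_k$ for $j \geq k$ coincides with $\sP_k|_{V_k}$, giving property~(1). Property~(2) follows immediately from the Fano-polytope identification above, since by construction a chamber in $V_k$ has stabilized and its associated polytope is mutation-equivalent to that of $\P^2$.

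The main obstacle is property~(3). For any point $x \in \breve{B}_{\P^2}\setminus V$ and every $k$, the chamber of $\sP_k$ containing $x$ is unstable and must be subdivided at some higher order, so infinitely many rays of $\sS$ pass arbitrarily close to $x$ provided the diameters of the chambers containing $x$ shrink to zero. The hard step is establishing this shrinking: I would argue that along any infinite path in the Markov tree, successive mutations produce nested sequences of triangular chambers whose diameters decrease geometrically, with accumulation loci filling out $\breve{B}_{\P^2}\setminus V$. Concretely, given an open set $U \subset \breve{B}_{\P^2}\setminus V$ one would exhibit a Markov triple whose associated ray meets $U$, using density of the approximating slopes produced by the Markov recursion; this is the combinatorial heart of the argument, and the step most sensitive to the precise choice of the $\sP_k$.
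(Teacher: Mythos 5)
Your high-level structure is aligned with the paper's: identify chambers with Fano polygons of weighted projective planes $\P(a^2,b^2,c^2)$ indexed by Markov triples, observe that wall-crossing corresponds to combinatorial mutation, and then define $V_k$ as a union of stabilised chambers. Items~(1) and~(2) are essentially correct as you outline them, although the paper works in the opposite direction from what you describe: rather than running the Gross--Siebert algorithm and then recognising Fano polygons in the output, it first builds an explicit polyhedral complex $\Region(P,v)$ by gluing Fano triangles along mutations (Sections~3--4), and then proves via Proposition~\ref{prop:rays_and_triangles} -- using the cluster-transformation description of $\fD(s)$ from Gross--Hacking--Keel--Kontsevich -- that this complex coincides with the chamber decomposition produced by the compatible structure $\sS$. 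Your characterisation-based definition of $V_k$ (``chambers whose interiors avoid all higher-order rays'') is workable for an existence claim, but the paper's construction-based version ($\fu\in\cT$ with $d(\fu)\le K(k)$) is what makes the identification with the Markov tree, and hence item~(2), unconditional.

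The genuine gap is in your treatment of item~(3). You argue that chambers in $\breve{B}_{\P^2}\setminus V$ shrink, so rays accumulate near any such point, and you invoke ``density of the approximating slopes produced by the Markov recursion.'' This conflates two distinct phenomena. The Markov recursion (the vectors $v^i_j$ in Definition~\ref{dfn:rays_from_edges}) produces rays lying \emph{outside} the cone between the two irrational asymptotes of slope $\frac{s\pm\sqrt{s^2-4}}{2}$; those rays accumulate only on the \emph{boundary} of that cone, and this is exactly what bounds the region $V$. The complement $\breve{B}_{\P^2}\setminus V$ is, up to closure, the union of cones $C_p$ lying \emph{between} the asymptotes (the ``Badlands'' of each local scattering diagram $\fD(s)$), and density of rays there is precisely the content of Conjecture~\ref{conj:dense_with_rays}, which the paper explicitly assumes and does not prove. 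Your proposal does not surface this hypothesis, and without it the shrinking-chamber argument establishes at best that rays accumulate on $\partial W$, not that they are dense in $W$. The paper's Proposition~\ref{pro:dense_complement} also does nontrivial work you gloss over: showing $\cl(W)\cup V=\breve{B}_{\P^2}$ requires the slope bounds of Lemma~\ref{lem:slope_bound} and Proposition~\ref{prop:better_bound}, the disjointness Lemma~\ref{lem:V_W_disjoint}, and a two-case argument (bounded versus unbounded minimal Markov entry) to show the edge lengths $f(i)\to 0$ -- the decay is not simply geometric in the unbounded case, so ``diameters decrease geometrically'' is not quite right.
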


We recall that a Fano polygon is a lattice polygon with primitive vertices which contains the origin in its interior. Given a Fano polygon $P$, its \emph{spanning fan} is the rational fan obtained by taking cones over the faces of $P$. We refer to~\cite{TropGeom,GS1} for the precise definitions of the terms specific to the Gross--Siebert algorithm, although we provide an overview of the program in \S\ref{sec:GS_background}. Note that item~\eqref{it:dense} of Theorem~\ref{thm:superpotential} is proven in \S\ref{sec:proof} subject to Conjecture~\ref{conj:dense_with_rays}, a well known expectation on rank $2$ scattering diagrams. Combining Theorem~\ref{thm:superpotential} with the results of \cite{CPS11} we recover the tropical superpotential in every chamber.

\begin{thm}
	\label{thm:superpotential_2}
	The \emph{tropical superpotential} $W^k_{\omega,\tau,\fu}$ is \emph{manifestly algebraic} in the sense defined in~\cite[p.$34$]{CPS11}, and may be identified with the unique rigid maximally mutable Laurent polynomial~\cite[Definition~$4$]{Overarching} with Newton polygon $P_{\fu}$.
\end{thm}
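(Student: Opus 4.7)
The plan is to combine the explicit broken--line construction of the tropical superpotential from~\cite{CPS11} with the uniqueness theorem for rigid maximally mutable Laurent polynomials supported on Fano polygons from~\cite{Overarching}. Throughout I fix $k$ and a chamber $\fu \in \Chambers(\sS,k)$ with $\fu \subset V_k$, so that Theorem~\ref{thm:superpotential}(2) identifies $\fu$, up to affine equivalence, with a Fano polygon $P_\fu$ mutation-equivalent to the standard triangle mirror to $\P^2$.

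First I would unwind the definition of $W^k_{\omega,\tau,\fu}$: by construction it is a sum of terms $c_\beta z^{m_\beta}$ indexed by broken lines $\beta$ terminating at a chosen basepoint of $\fu$, with $c_\beta$ the product of wall functions encountered at the bends of $\beta$ and $m_\beta$ the outgoing monomial direction. Manifest algebraicity, in the sense of~\cite[p.~34]{CPS11}, amounts to the finiteness of the contributing $\beta$ for each lattice exponent $m$, so that the sum is a Laurent polynomial. I would prove this by (i) bounding the initial asymptotic directions of contributing broken lines by $-P_\fu \cap \Z^2$, using that $\fu$ is a full-dimensional chamber of $\sP_k$ and item~(1) of Theorem~\ref{thm:superpotential}, and (ii) bounding the number of bends of $\beta$ by the finiteness of $\sS[k]$ at each order. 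This simultaneously identifies $\Newt(W^k_{\omega,\tau,\fu})$ with $P_\fu$.

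Next I would establish the maximal mutability property. When $\fu$ and an adjacent chamber $\fu'$ of $\sP_k$ lying in $V_k$ are separated by a wall carrying an attached function $(1+z^{v_e})^{\ell_e}$, with $v_e$ the primitive inward normal to an edge $e$ of $P_\fu$ of lattice length $\ell_e$, the wall-crossing formula of~\cite{CPS11} transforms the broken-line generating function by the corresponding algebraic mutation. Item~(1) of Theorem~\ref{thm:superpotential} ensures every edge of $P_\fu$ is realised by such a wall for $k$ sufficiently large, so the collection $\{W^k_{\omega,\tau,\fu}\}_\fu$ mutates in the sense of~\cite[Definition~$4$]{Overarching}. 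A direct count of broken lines bending exactly once at a ray emanating from a focus--focus singularity dual to $e$ then recovers the binomial edge term $(1+x)^{\ell_e}$ required by the rigid MMLP axiom.

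Finally I would conclude by invoking the uniqueness of the rigid MMLP on each Fano polygon in the mutation class of the standard triangle: since $W^k_{\omega,\tau,\fu}$ is maximally mutable with Newton polygon $P_\fu$ and satisfies the edge binomial conditions, it must agree with the unique such polynomial. The step I anticipate being most delicate is verifying genuine \emph{rigidity} -- that no nontrivial deformation of the interior coefficients preserves both the edge binomials and the mutation compatibilities simultaneously. I would handle this by fixing one reference chamber whose polygon is the standard triangle, where $W^k_{\omega,\tau,\fu} = x + y + (xy)^{-1}$ by a direct calculation, and then propagate rigidity across the mutation orbit via the mutation equivalences established above. The principal obstacle is controlling higher-bend broken-line contributions to interior coefficients finely enough to match the rigid (and not merely maximally mutable) polynomial; here I expect to exploit the Markov triple combinatorics governing the mutation class of $\P^2$ to cascade the constraints from edges to interior.
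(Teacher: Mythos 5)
Your overall strategy is the same as the paper's: anchor the calculation in the central chamber where $W = x + y + (xy)^{-1}$, propagate to all other chambers via the wall-crossing formula, identify wall-crossing with algebraic mutation, and conclude by uniqueness of the rigid MMLP supported on each $P_\fu$. That is exactly the structure of the paper's (very short) proof, which proceeds from Proposition~\ref{prop:rays_are_edges} and the binomiality of wall functions via~\cite[Example~1.15]{GHKK}.

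However, there is a concrete error in the middle step. You claim that adjacent chambers are separated by a wall carrying the function $(1+z^{v_e})^{\ell_e}$, with $\ell_e$ the lattice length of the edge $e$ of $P_\fu$. This is wrong, and if it were right the rest of the argument would not work: the wall-crossing would then be the $\ell_e$-th power of a mutation, which is not a combinatorial mutation of Fano polygons. The correct statement — as recorded in the unlabelled Lemma at the end of \S\ref{sec:GS_background} and used in the paper's proof — is that the function on each ray bounding a triangle in $\cT$ is a \emph{single} binomial $1+c_m z^m$ with $\bar m$ primitive; the corresponding birational map is the algebraic mutation with factor $1+c_m z^{\bar m}$, which is precisely the mutation taking $P_\fu$ to an adjacent Fano polygon (the edge of length $\ell_e$ is removed in one step because its lattice length equals its local index, cf.~Lemma~\ref{lem:comparison}, not because the factor is raised to a power). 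Also note that $v_e$ should be the primitive \emph{tangent} direction along the wall, not the inward normal: by consistency of the scattering diagram, $\bar m$ is parallel to $\d$.

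Two further remarks on economy. Your proposed direct proof of manifest algebraicity (bounding initial directions and numbers of bends) and the "direct count of broken lines bending exactly once" are not needed: the paper derives manifest algebraicity from items~(1)--(2) of Theorem~\ref{thm:superpotential}, which say that for a fixed $\fu$ the decompositions $\sP_k$ stabilize on $V_k$, so $W^k_{\omega,\tau,\fu}$ stabilizes in $\field[P_{\varphi,\omega}]$. Similarly, the rigidity you flag as the delicate step is automatic once the wall-crossing is correctly identified with algebraic mutation: $x+y+(xy)^{-1}$ is the rigid MMLP for the standard triangle, mutation takes rigid MMLPs to rigid MMLPs, and by Proposition~\ref{prop:rays_are_edges} \emph{only} edge-rays of $\cT$-triangles separate chambers in $V$, so no extraneous wall-crossing contaminates the propagation. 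Your proposal implicitly relies on this last fact by invoking Theorem~\ref{thm:superpotential}(1), but it is worth making the dependence on Proposition~\ref{prop:rays_are_edges} explicit, since without it one could not rule out additional rays meeting $V$ that would introduce extra factors into the wall-crossing transformation.
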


That is, rather than a single mirror Laurent Polynomial $W$, we obtain infinitely many polynomials related by certain birational changes of variables. The dual cell complex to these walls and chambers is a trivalent tree, and each chamber is a triangle similar to the Fano triangle defined by the corresponding degeneration of the projective plane. In fact the nodes of this trivalent tree have an interpretation as integral solutions of the Markov equation $x^2+y^2+z^2=3xyz$. Indeed, we recall that toric degenerations of $\P^2$ are also classified by such Markov triples; see Hacking--Prokhorov~\cite{HP10}.

\begin{thm}
	\label{thm:P2_classification}
	The set of toric varieties to which $\P^2$ admits a toric degeneration is in canonical bijection with the integral solutions of the \emph{Markov equation} $a^2+b^2+c^2 = 3abc$. Consequently all toric degenerations of $\P^2$ are related by combinatorial mutation.
\end{thm}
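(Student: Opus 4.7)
The plan is to use Theorem~\ref{thm:superpotential} to produce a map from chambers to Markov triples, then identify wall-crossings in the chamber structure with Markov mutations, and finally compare the resulting trivalent tree with the classical Markov tree.

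First, I would associate a Markov triple to each chamber. By Theorem~\ref{thm:superpotential}(2), for each $\fu \in \Chambers(\sS,k)$ with $\fu \subset V_k$ the polygon $P_\fu$ is a Fano triangle (the dual cell complex of the chamber decomposition is trivalent), and its spanning fan determines a toric variety $X_\fu$ to which $\P^2$ degenerates. Such a toric surface satisfies $(-K_{X_\fu})^2 = 9$ and has only T-singularities (the singularity type admitting $\Q$-Gorenstein smoothings), so by the Hacking--Prokhorov classification~\cite{HP10} we have $X_\fu \cong \P(a^2,b^2,c^2)$ for some triple $(a,b,c) \in \Z^3_{>0}$ with $a^2+b^2+c^2 = 3abc$. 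Assigning this triple (up to permutation) to $\fu$ defines a map from chambers to Markov triples.

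Next, I would identify wall-crossings with Markov mutations. Each chamber has exactly three neighbours in the trivalent tree. At each wall, the affine chart transition prescribed by the scattering diagram is an element of $\GL_2(\Z) \ltimes \Z^2$ of shear type; acting on $P_\fu$ it removes one edge and attaches a replacement in an orthogonal direction, which is precisely the operation of \emph{combinatorial mutation} in the sense of Akhtar--Coates--Galkin--Kasprzyk. Expressed on the triple of edge lattice lengths $(a^2,b^2,c^2)$, this is a Markov mutation, for instance $(a,b,c) \mapsto (3bc - a, b, c)$.

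Third, I would establish the bijection. The chamber corresponding to $\P^2$ itself is labelled by $(1,1,1)$, and by Markov's theorem every integral solution of $a^2+b^2+c^2=3abc$ is connected to $(1,1,1)$ by a sequence of Markov mutations. The Markov tree is the infinite trivalent tree rooted at $(1,1,1)$ with edges labelled by such mutations, and the chamber structure of Theorem~\ref{thm:superpotential} is a trivalent tree of the same combinatorial type. Hence the map from chambers to triples is both surjective (every Markov triple is reached) and injective (the labelling of the tree is determined by its root), giving the canonical bijection. The second assertion of the theorem then follows immediately: any two chambers are joined by a finite path of wall-crossings, each realising a combinatorial mutation, so any two toric degenerations of $\P^2$ are related by a sequence of combinatorial mutations.

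The main obstacle is the second step: verifying that the scattering chart-transition across each wall precisely realises the combinatorial mutation of the attached Fano triangle. This requires explicitly computing the wall-crossing transformations at the walls separating the chambers produced by Theorem~\ref{thm:superpotential} and tracking the resulting deformation of $P_\fu$ against the general definition of combinatorial mutation. Once this identification is in place, the translation to the Markov equation and the comparison with the Markov tree are formal.
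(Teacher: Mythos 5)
The paper gives no independent proof of Theorem~\ref{thm:P2_classification}: it is a direct citation of the Hacking--Prokhorov classification~\cite{HP10} (see the sentence immediately preceding the theorem, and the proof of Theorem~\ref{thm:P2_triangles}, where the equivalence~(1)$\Leftrightarrow$(3) is justified by ``the classification of Hacking--Prokhorov stated in Theorem~\ref{thm:P2_classification}''). The second sentence then follows because the Markov tree is connected under $(a,b,c)\mapsto(b,c,3bc-a)$ and the paper identifies each such move with a combinatorial mutation of the associated Fano triangle (see \S\ref{sec:fano_polygons}, in particular Lemma~\ref{lem:P2_mutations} and Lemma~\ref{lem:comparison}).

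Your proposal is circular. The conclusion of Theorem~\ref{thm:superpotential}(2) -- that the spanning fan of each $P_\fu$ defines a toric variety to which $\P^2$ degenerates -- rests on Theorem~\ref{thm:P2_triangles}, whose proof cites Theorem~\ref{thm:P2_classification} itself; you then cite~\cite{HP10} again in your second step to identify $X_\fu\cong\P(a^2,b^2,c^2)$, at which point the first assertion of the theorem has already been assumed. What your argument actually constructs is a bijection between chambers of the scattering structure and Markov triples, not between toric degenerations of $\P^2$ and Markov triples; upgrading the former to the latter requires knowing that every toric degeneration of $\P^2$ arises as some $X_\fu$, and this surjectivity is once more the content of~\cite{HP10}. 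Finally, the ``main obstacle'' you flag -- verifying that wall-crossings realise combinatorial mutations -- is moot here: in the paper the chambers are \emph{defined} via iterated combinatorial mutation of $P_0$ (Definition~\ref{dfn:diagram}), so adjacency by mutation is built into the construction rather than something to be deduced from the scattering formalism.
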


We recall that combinatorial mutation was defined in~\cite{ACGK12} for any Fano polytope. The Markov equation is well known and appears in many different areas of mathematics. The integral solutions of this equation are completely described by the following lemma.

\begin{lem}
	Given a solution $(a,b,c)$ of $x^2 + y^2 + z^2=3xyz$ another solution is given by $\left(b,c,3bc-a\right)$. Given the initial solution $(1,1,1)$ this process generates all integral solutions to the Markov equation. 
\end{lem}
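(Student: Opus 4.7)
The plan is to treat the lemma's two assertions in turn. To see that $(b,c,3bc-a)$ solves the Markov equation, I would view the equation with $b$ and $c$ fixed as a quadratic in the first variable:
\[
a^2 - 3bc\cdot a + (b^2+c^2) = 0.
\]
Vieta's formulas give $3bc-a$ as the other root, so $(3bc-a,b,c)$ is again a solution; by the symmetry of the equation in the three variables, so is $(b,c,3bc-a)$.

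For the generation claim I would interpret ``this process'' to include permutations of coordinates -- equivalently, the Vieta involution at any of the three slots -- so that the assertion becomes the classical Markov descent. Let $(a,b,c)$ be a positive integer solution arranged so that $a \leq b \leq c$. The plan is to show that whenever $(a,b,c) \neq (1,1,1)$ the mutated triple $(a,b,c')$ with $c' := 3ab-c$ satisfies $c' < c$, so the sum $a+b+c$ strictly decreases under descent. Since this sum is a positive integer, descent must terminate, and by the preceding sentence it can only terminate at $(1,1,1)$. Running the mutations in reverse then produces every positive integer solution from $(1,1,1)$.

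The key computation is to evaluate $f(x) := x^2 - 3abx + (a^2 + b^2)$ at $x = b$, giving
\[
f(b) = a^2 + b^2(2-3a).
\]
A brief case split shows $f(b) < 0$ whenever $(a,b) \neq (1,1)$: if $a=1$ and $b \geq 2$ then $f(b) = 1-b^2 < 0$, and if $a \geq 2$ then $b \geq a \geq 2$ and $f(b) \leq a^2 - 4b^2 < 0$. In these cases $b$ lies strictly between the two roots of $f$, so the hypothesis $c \geq b$ forces $c$ to be the larger root and $c'$ the smaller, whence $c' < c$. The remaining boundary case $a = b = 1$ is handled by inspection: the roots of $f$ are $1$ and $2$, yielding only $(1,1,1)$ and $(1,1,2)$, and the latter mutates back to the former. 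This boundary case, where the Vieta argument degenerates, is the only real point requiring care -- I do not anticipate any further obstacle. Integer solutions of arbitrary sign reduce to the positive case by the observation that flipping the signs of any two coordinates preserves the Markov equation.
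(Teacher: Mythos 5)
The paper offers no proof of this lemma at all: it is stated as a classical fact about Markov triples, so there is no argument of the author's to compare yours against. Your proof is the standard and correct one: the Vieta observation that $a$ and $3bc-a$ are the two roots of $t^2-3bc\,t+(b^2+c^2)$ shows the mutated triple is again a solution, and the descent step — the computation $f(b)=a^2+b^2(2-3a)<0$ for $(a,b)\neq(1,1)$ with $a\leq b$, which places $b$ strictly between the two roots and hence forces $3ab-c<c$ when $c$ is the largest entry — makes the sum strictly decrease until the descent reaches $(1,1,1)$, with the degenerate case $a=b=1$ correctly disposed of by inspection via $(1,1,2)$. Two small points of precision, both about the lemma's informal phrasing rather than your argument: the process $(a,b,c)\mapsto(b,c,3bc-a)$ as literally stated generates solutions only up to permutation, so your reading that permutations (equivalently, the Vieta involution in any slot) are implicitly allowed is the intended one — it matches the tree $\Markov$ built immediately afterwards, whose vertices are unordered triples; and ``all integral solutions'' must tacitly exclude the trivial solution $(0,0,0)$, which is not reached by your sign-flip reduction (if one coordinate vanishes, all do) and is not in the orbit of $(1,1,1)$. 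With the conventional reading — positive solutions up to permutation — your proof is complete and is exactly the argument the paper implicitly relies on.
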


Thus the solutions of the Markov equation may be encoded in a trivalent graph $\Markov$, part of which is illustrated below. Note that -- fixing the root $(1,1,1)$ -- the tree $\Markov$ can be interpreted as a partial order of its vertices. Regarded as a partially ordered set, $\Markov$ is an unbounded meet-semilattice. $\Markov$ is graded by the function $d\colon \Markov \to \Z_{\geq 0}$, such that -- for any $\ba \in \Markov$ -- $d(\ba)$ is the length of the shortest path between $\ba$ and $(1,1,1)$. Given an element $\ba \in \Markov$, we let $\Markov_{\mathbf{a}}$ denote the subgraph of $\Markov$ on vertices greater than or equal to $\mathbf{a}$.

\begin{center}
	\small
	\begin{tikzpicture}[grow=down,level distance=1cm]
	\tikzstyle{level 3}=[sibling distance=6cm] 
	\tikzstyle{level 4}=[sibling distance=3cm] 
	\tikzstyle{level 5}=[sibling distance=1.5cm,level distance=0.7cm]
	\node {$(1,1,1)$}
	child {node {$(1,1,2)$}
		child {node {$(1,2,5)$}
			child {node {$(2,5,29)$}
				child {node {$(5,29,433)$}
					child {node {} edge from parent[dotted]}
					child {node {} edge from parent[dotted]}}
				child {node {$(2,29,169)$}
					child {node {} edge from parent[dotted]}
					child {node {} edge from parent[dotted]}}}
			child {node {$(1,5,13)$}
				child {node {$(5,13,194)$}
					child {node {} edge from parent[dotted]}
					child {node {} edge from parent[dotted]}}
				child {node {$(1,13,34)$}
					child {node {} edge from parent[dotted]}
					child {node {} edge from parent[dotted]}}}}
	};
	\end{tikzpicture}
\end{center}

In fact this structure on the mirror to the projective plane is expected from another point of view on Mirror Symmetry. Following \cite{CCGGK}, one expects that a certain period of the fibration defined by $W$ computes a certain generating function of Gromov--Witten invariants called the (regularised) \emph{quantum period}. In the case of $\P^2$ it is easy to construct an infinite family of Laurent polynomials $f \in \C[\Z^2]$ such that the period integral
\[
\pi_f(t) := \int_\Gamma \frac{\Omega}{1-tf},
\]
\noindent where $\Gamma := \{|x_1| = |x_2| = 1\}$ and $\Omega := \frac{1}{2\pi\iu}\frac{dx_1}{x_1}\wedge\frac{dx_2}{x_2}$, is equal to the regularised quantum period $\widehat{G}_{\P^2}$ of $\P^2$,
\[
\widehat{G}_{\P^2}(t) = \sum_{m \geq 0}{\frac{(3m)!}{(m!)^3}t^{3m}}.
\]
Indeed, following \cite{CCGGK} we say that a Laurent polynomial $f \in \C[\Z^2]$ is \emph{mirror-dual} to $\P^2$ if its period $\pi_f(t)$ is equal to the regularised quantum period of $\P^2$. A collection of such polynomials, indexed by the vertices of $\mathcal{G}$, can be obtained from the polynomial $x+y+1/xy$ using the notion of \emph{mutation of potential}\footnote{These are called \emph{algebraic mutations} in \cite{ACGK12}, and \emph{symplectomorphisms of cluster type} in~\cite{Katzarkov--Przyjalkowski}.} defined by Galkin--Usnich~\cite{GU10}, and developed by Akhtar--Coates--Galkin--Kasprzyk~\cite{ACGK12}.

\begin{remark}
	Combining results of Tveiten~\cite{T18} with the results of \cite{Overarching}, it may be shown that \emph{all} Laurent polynomials with period $\pi_f(t) = \widehat{G}_{\P^2}$ may be obtained from the polynomial $x+y+1/xy$ by mutation.
\end{remark}

Theorem~\ref{thm:superpotential_2} shows that all Laurent polynomials mirror-dual to $\P^2$ can be expressed as counts of broken lines in the affine manifold $\breve{B}_{\P^2}$. We also observe that the integral solutions to the Markov equation also enumerate the monotone Lagrangian tori in $\P^2$ found by Vianna~\cite{Vianna:P2}. In fact Theorems~\ref{thm:superpotential} and~\ref{thm:superpotential_2} can be interpreted as tropical analogues of these results in symplectic geometry. Indeed, for each chamber in $\breve{B}_{\P^2}$ the SYZ conjecture predicts there is a family of Hamiltonian isotopic Lagrangian tori lying in $\P^2$, thus we demonstrate the close compatibility of the wall and chamber structure defined by the Gross--Siebert algorithm and that predicted by the existence of non-Hamiltonian isotopic Lagrangian tori in this case.

\section*{Acknowledgements}
We thank Alexander Kasprzyk and Mohammad Akhtar for explaining combinatorial mutations and thank Tom Coates for suggesting a number of corrections. There is also a clear intellectual debt owed to the preliminary version of the paper~\cite{CPS11} of Carl--Pumperla--Siebert. TP is supported by an EPSRC Postdoctoral Prize Fellowship and Fellowship by Examination at Magdalen College, Oxford.

\section{Fano Polygons and mutation}
\label{sec:fano_polygons}

We study the class of Fano polygons $P$ associated\footnote{We say a polygon $P$ is associated to a toric variety $X$ if $X$ is isomorphic to the toric variety defined by the \emph{spanning fan} of $P$.} to $\Q$-Gorenstein toric degenerations of $\P^2$. These polygons are related by \emph{combinatorial mutation}; we refer to~\cite{ACGK12} for the general definition of a combinatorial mutation, but we give a simple characterisation of the definition in Lemma~\ref{lem:P2_mutations}.

Given a Fano polygon $P$ we can form a pair $(n,\cB)$ -- the \emph{singularity content}~\cite[Definition~$3.1$]{SingCon} of $P$ -- consisting of an integer $n$ and a set of cyclic quotient singularities $\cB$. The combinatorial definition of singularity content and proof of its mutation invariance is given in~\cite{SingCon}, but a geometric interpretation of this invariant is given in~\cite[p.$10$]{Overarching}. In particular, given a Fano polygon $P$ and a locally $\Q$-Gorenstein rigid del Pezzo surface $X$ which admits a $\Q$-Gorenstein degeneration to $X_P$, the integer $n$ is the topological Euler characteristic of the smooth locus of $X$. The tuple $\cB$ is the basket of singularities of the log del Pezzo surface $X$.

The class of Fano polygons associated to $\Q$-Gorenstein toric degenerations of $\P^2$ is well-understood; see Theorem~\ref{thm:P2_classification}, following \cite{HP10}. Indeed, each Fano polygon in this class is formed by taking the convex hull of the ray generators of the fan determined by a weighted projective space $\P(a^2,b^2,c^2)$, where $(a,b,c)$ is a Markov triple. Note that, via the characterisation of singularity content given above, the singularity content of any such Fano triangle $P$ is equal to $(3,\varnothing)$. Fix a lattice $N \cong \Z^2$, and let $N_\R := N\otimes_\Z \R$.

\begin{thm}
	\label{thm:P2_triangles}
	Given a Fano polygon $P \subset N_\R$ the following are equivalent:
	\begin{enumerate}
		\item\label{it:toric_degen} $P$ is the polygon associated to a toric degeneration of $\P^2$.
		\item\label{it:sing_con} The singularity content of $P$ is $(3,\emptyset)$.
		\item\label{it:mutations} The polygons $P$ and $P_0 := \conv{(1,0),(0,1),(-1,-1)}$ are related by a sequence of combinatorial mutations.
	\end{enumerate}
\end{thm}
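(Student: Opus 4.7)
The plan is to establish the three implications (1) $\Rightarrow$ (2) $\Rightarrow$ (3) $\Rightarrow$ (1) in turn, using the mutation invariance of singularity content as the conceptual backbone and the classification of Hacking--Prokhorov~\cite{HP10} to reduce combinatorics to the Markov equation.

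For (1) $\Rightarrow$ (2) I will use the geometric interpretation of singularity content quoted from~\cite{Overarching}: if $P$ is associated to a $\Q$-Gorenstein toric degeneration of a smooth (or locally $\Q$-Gorenstein rigid) del Pezzo surface $X$, then $n = \chi_{\mathrm{top}}(X^{\mathrm{sm}})$ and $\cB$ is the basket of singularities of $X$. Applied to $X = \P^2$ this gives $n = \chi_{\mathrm{top}}(\P^2) = 3$ and $\cB = \emptyset$ since $\P^2$ is smooth. (Alternatively, since every toric degeneration of $\P^2$ is a weighted projective space $\P(a^2,b^2,c^2)$ with $(a,b,c)$ Markov, one checks that each of its three singular cones is a T-cone, giving the same conclusion directly.)

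For (2) $\Rightarrow$ (3) I first argue that $(n,\cB) = (3,\emptyset)$ forces $P$ to be a triangle: the definition of singularity content writes $n$ as the total number of primitive T-cones around the boundary of $P$ and $\cB$ as the collection of residual \emph{residual} singularities, so $\cB = \emptyset$ means every cone over an edge of $P$ is a union of T-cones, and $n=3$ counts these primitive T-cones. Since a Fano polygon has at least three vertices and each vertex contributes at least one primitive T-cone to $n$, $P$ has exactly three vertices and each edge determines a single primitive T-cone. The primitive T-cones in $N_\R$ are the cones $\sigma_{a,b}$ generated by $(a,b)$ and some companion lattice vector, parametrised (up to $\mathrm{GL}(N)$) by a single positive integer; combining the three T-cone conditions around $P$ yields (after a standard normalisation, see~\cite{HP10}) the Markov equation $a^2+b^2+c^2 = 3abc$ on the vertices. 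Combinatorial mutation acts on this triangle by Markov mutations $(a,b,c) \mapsto (b,c,3bc-a)$, and the lemma stated just before Theorem~\ref{thm:P2_classification} guarantees that any Markov triple can be reduced to $(1,1,1)$, which corresponds exactly to $P_0$.

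For (3) $\Rightarrow$ (1) I invoke the principle, from~\cite{ACGK12} and the $\Q$-Gorenstein deformation theory underlying~\cite{HP10}, that combinatorial mutation of Fano polygons corresponds to a $\Q$-Gorenstein deformation connecting the associated toric varieties inside a single qG-deformation family. Since $P_0$ is the polygon of $\P^2$, the toric variety associated to any mutation of $P_0$ lies in the same qG-deformation component as $\P^2$ and hence admits a $\Q$-Gorenstein degeneration from $\P^2$, establishing (1).

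The principal obstacle is the step (2) $\Rightarrow$ (3): identifying a Fano polygon with singularity content $(3,\emptyset)$ with a Markov triple, and then matching combinatorial mutation of the polygon with Markov mutation of the triple. The first half is essentially the content of the Hacking--Prokhorov classification, which I will cite rather than reprove; the second is a direct calculation verifying that the mutation formula of~\cite{ACGK12} on vertices of such a triangle reproduces $(a,b,c) \mapsto (b,c,3bc-a)$.
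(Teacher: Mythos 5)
Your proof takes a genuinely different route from the paper's. You prove the cycle $(1) \Rightarrow (2) \Rightarrow (3) \Rightarrow (1)$, whereas the paper establishes two independent equivalences: $(2) \Leftrightarrow (3)$ via mutation-invariance of singularity content together with the KNP15 classification of minimal Fano polygons, and $(1) \Leftrightarrow (3)$ via Hacking--Prokhorov. The key difference lies in the $(2) \Rightarrow (3)$ step. The paper's argument is a clean two-line citation: any Fano polygon mutates to a minimal one (KNP15, Theorem 1.2), singularity content is preserved along the way, and $P_0$ is the unique minimal polygon with singularity content $(3,\emptyset)$. You instead try to derive the Markov structure of $P$ directly from the observation that each edge cone must be a single primitive T-cone, and then cite HP10 for the claim that the three T-cone closure conditions imply the Markov equation. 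Be careful here: HP10's main theorem classifies toric surfaces admitting a $\Q$-Gorenstein smoothing to $\P^2$ — that is, it characterises the objects satisfying statement (1), not (2). Using it to establish $(2)\Rightarrow(3)$ without first connecting (2) to (1) risks circularity; the ingredient you actually need is the purely combinatorial classification of Fano triangles whose edges are primitive T-cones, which is precisely what KNP15 supplies. (Alternatively, your observation that $\cB=\emptyset$ and $n=3$ forces the qG-rigid del Pezzo in the mutation class to be smooth with $\chi_{\mathrm{top}}=3$, hence $\P^2$, would give a direct $(2)\Rightarrow(1)$ and close the cycle without the combinatorial derivation.) A small slip worth flagging: ``each vertex contributes at least one primitive T-cone'' should read ``each edge'' — T-cones are cones over edges, and it is the edge count that bounds $n$ from below. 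Your $(1)\Rightarrow(2)$ via the geometric interpretation of singularity content and $(3)\Rightarrow(1)$ via the qG-deformation interpretation of combinatorial mutation are both sound, and together replace the paper's single invocation of HP10 for $(1)\Leftrightarrow(3)$; this is a reasonable trade, spreading the geometric input over two implications.
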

\begin{proof}
	The equivalence of \eqref{it:sing_con} and \eqref{it:mutations} follows from the mutation invariance of singularity content~\cite[Proposition~$3.6$]{SingCon} and the classification of minimal polygons~\cite[Theorem~$1.2$]{KNP15}. The equivalence of \eqref{it:toric_degen} and \eqref{it:mutations} follows from the classification of Hacking--Prokhorov stated in Theorem~\ref{thm:P2_classification}.
\end{proof}

\begin{figure}
	\includegraphics[scale=0.7]{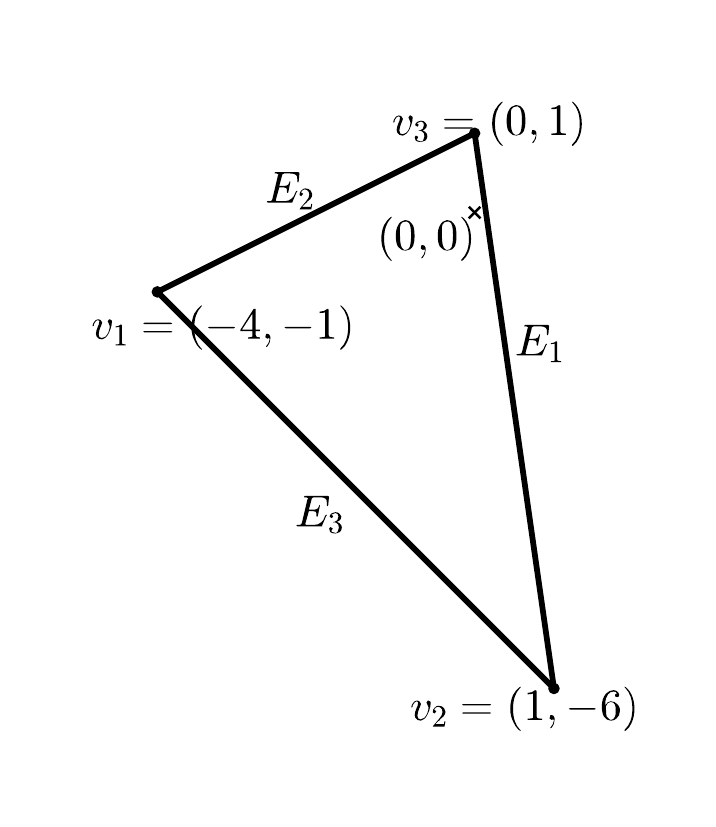}
	\caption{Fano polygon associated to $\P(1^2,2^2,5^2)$ }
	\label{fig:example_triangle}
\end{figure}

For the remainder of this section we fix a Markov triple $\ba = (a_1,a_2,a_3)$, and let $P \subset N_\R$ be the Fano polygon associated to $\P(a_1^2,a_2^2,a_3^2)$. We also fix a bijection between the vertices of $P$ and the multiset $\{a_1,a_2,a_3\}$ such that, if $v_i$ denotes the vertex associated to $a_i$ for $i \in \{1,2,3\}$, $\sum_{i = 1}^3{a_i^2v_i} = 0$. Let $E_i$ denote the edge of $P$ which is disjoint from $v_i$ for each $i \in \{1,2,3\}$. Throughout this article we will assume that Markov triples are ordered so that $a_3 \geq a_2$ and $a_3 \geq a_1$.

Fixing an edge $E$ of $P$, let $w_E \in M := \Hom(N,\Z)$ denote the primitive inner normal vector to $E$. The integer $r_E = -w_E(E)$ is called the \emph{local index} of $E$. It is easy to verify that $r_{E_i} = a_i$, and that the lattice length $\ell(E_i) = a_i$ for all $i \in \{1,2,3\}$.

Following \cite[Definition~$5$]{ACGK12}, a \emph{mutation} of $P$ is fixed by a choice of weight vector $w \in M$ and factor polytope $F \subset \Ann(w) \otimes_\Z \R$. In fact, as $\Ann(w) \cong \Z$, we can make a standard choice of $F$; the interval $\conv{0,u}$, where $u$ is a primitive lattice vector. Note that there is a binary choice of $u$, which we leave unresolved. There are three (non-trivial) choices of weight vectors $w$ for $P$: the inner normal vectors to the edges. We let $\mut_{w}(P)$ denote either of the ($\SL_2(\Z)$ equivalent) polygons obtained by mutating with either choice of the element $u$.

Fix a weight vector $w \in M$ and factor $F = \conv{0,u} \subset w^\bot$, where $u$ is a primitive lattice vector, which define a mutation of $P$. Note there is a unique edge $E$ and vertex $v$ of $P$ such that $E = \ell(E) F + v$. Letting $v'$ denote the vertex of $P$ not contained in $E$, the following lemma may be taken as the definition of the polygon $\mut_w(P)$. See Figure~\ref{fig:mutating_triangle} for an illustration of this transformation.

\begin{lem}
	\label{lem:P2_mutations}
	The polygon $\mut_w(P)$ is equal to the convex hull of $v$, $v'$, and $v' + \langle w, v'\rangle u$.
\end{lem}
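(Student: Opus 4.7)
The plan is to apply the Minkowski-operation definition of combinatorial mutation from~\cite{ACGK12} directly to the Fano triangle $P = \conv{v_1, v_2, v_3}$. By symmetry it suffices to take $w = w_1$, the inner normal to $E_1 = [v_2, v_3]$, so that $v' = v_1$; fixing a sign for $u$ so that $v = v_2$, we have $v_3 = v_2 + a_1 u$ and the claim becomes $\mut_w(P) = \conv{v_2, v_1, v_1 + Ku}$, where $K := \langle w, v_1 \rangle$. A first computation applying $w$ to the weighted barycenter relation $a_1^2 v_1 + a_2^2 v_2 + a_3^2 v_3 = 0$, and using the Markov identity $a_2^2 + a_3^2 = a_1(3 a_2 a_3 - a_1)$, identifies $K = 3 a_2 a_3 - a_1$.

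I would then parametrise the slice $w_h(P)$ for $h \in [-a_1, K]$ in barycentric coordinates. The point $\alpha v_1 + \beta v_2 + \gamma v_3$ has $w$-height $3 a_2 a_3\, \alpha - a_1$, so the slice is fixed by $\alpha = \lambda := (h + a_1)/(3 a_2 a_3)$, and substituting $v_3 = v_2 + a_1 u$ shows that it takes the form $[\lambda v_1 + (1-\lambda) v_2, \; \lambda v_1 + (1-\lambda) v_2 + L(h) u]$ with $L(h) = a_1(1-\lambda)$. The mutation now replaces each slice by its Minkowski sum with $hF$ for $h \geq 0$ and by its Minkowski difference with $|h|F$ for $h \leq 0$; the latter is well-defined because the Markov identity forces $L(h) \geq |h|$ on $[-a_1, 0]$, with equality at the base $h=-a_1$.

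The heart of the argument is the observation that, in both sign cases, the mutated slice retains its left endpoint while its length becomes
\[
L(h) + h \;=\; \frac{a_2^2 + a_3^2 - a_1 h}{3 a_2 a_3} + h \;=\; \frac{a_1 K + K h}{3 a_2 a_3} \;=\; K \lambda,
\]
via $a_2^2 + a_3^2 = a_1 K$ and $K + a_1 = 3 a_2 a_3$. Hence every mutated slice equals $[\lambda v_1 + (1-\lambda) v_2, \; \lambda(v_1 + Ku) + (1-\lambda) v_2]$, and as $\lambda$ sweeps $[0, 1]$ these segments fill out exactly $\{\alpha v_1 + \beta v_2 + \gamma(v_1 + Ku) : \alpha, \beta, \gamma \geq 0, \; \alpha + \beta + \gamma = 1\} = \conv{v_2, v_1, v_1 + Ku}$, which is already convex. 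The main step to watch is this slice-length cancellation: it is precisely the Markov identity that collapses the two branches of the mutation definition into the common value $K\lambda$, without which the mutation boundary would kink across $\{w = 0\}$ and produce a quadrilateral rather than a triangle.
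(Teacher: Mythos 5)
Your computation is correct, and it is worth noting that the paper does not actually prove this lemma: it is introduced with the remark that it ``may be taken as the definition'' of $\mut_w(P)$, the implicit justification being the observation (used later, in Lemma~2.8) that for a Markov triangle $\ell(E_i)=r_{E_i}=a_i$, so the mutation ``completely removes'' the edge $E$ and leaves the triangle spanned by $v$, $v'$ and $v'+\langle w,v'\rangle u$. Your argument supplies exactly the missing verification from the Minkowski-sum/difference definition of~\cite{ACGK12}: the identification $K=\langle w,v'\rangle=3a_2a_3-a_1$ via $\sum a_i^2v_i=0$, the slice parametrisation $L(h)=a_1(1-\lambda)$, and the cancellation $L(h)+h=K\lambda$ are all correct, and the last identity is precisely where the hypothesis $\ell(E)=r_E$ (equivalently, the Markov relations) enters -- for a general Fano triangle with $\ell(E)\neq r_E$ the mutated slices would not align into a triangle with apex $v'+Ku$, so it is good that you isolated this as the key step. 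Two small points of hygiene: the definition in~\cite{ACGK12} works with integer heights and only requires $G_h$ to satisfy a sandwich condition $H_{w,h}\cap\V{P}\subseteq G_h+\abs{h}F\subseteq w_h(P)$, with the result independent of the choice of $G_h$; your choice of the full Minkowski difference at every (real) height is a valid choice, and since every mutated slice lies in $\conv{v_2,v_1,v_1+Ku}$ while the extreme slices $G_{-a_1}=\{v_2\}$ and $w_K(P)+KF=[v_1,v_1+Ku]$ already span it, the convex hull over integer heights agrees with your continuous sweep. With those remarks your proof is complete and, if anything, more explicit than what the paper offers.
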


\begin{figure}[H]
	\centering
	\subfigure{%
		\includegraphics[scale=0.8]{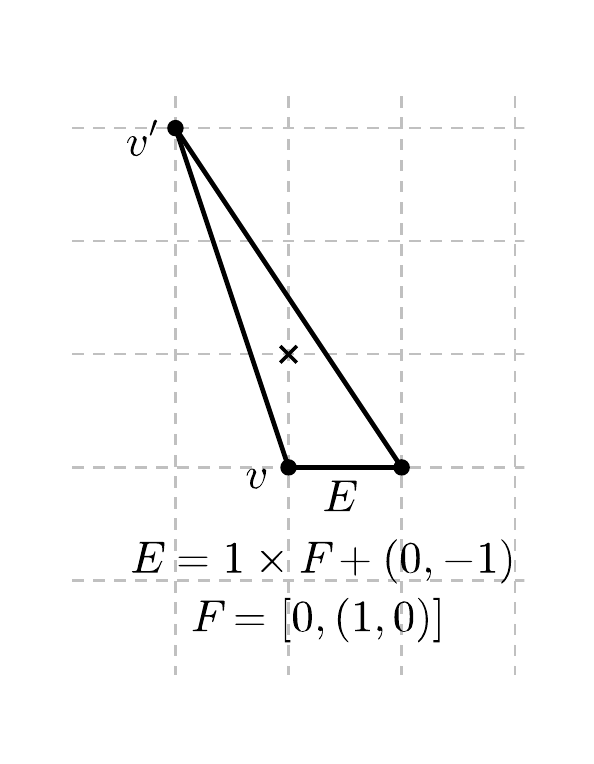}}
	\quad
	\subfigure{%
		\includegraphics[scale=0.8]{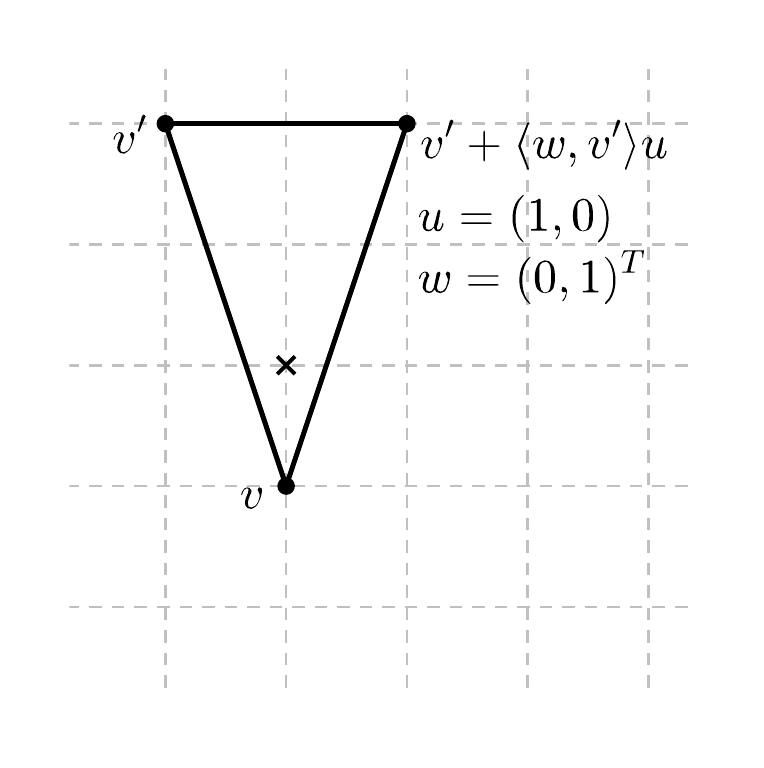}}

	\caption{Mutating a Fano triangle. \label{fig:mutating_triangle}}
\end{figure}

\begin{remark}
	The polygon $\mut_w(P)$ is exactly -- that is, not only up to transformations in $\SL(2,\Z)$ -- determined by a \emph{mutating} edge $E$ and \emph{fixed} edge $E'$. In Figure~\ref{fig:mutating_triangle} the edge $E'$ is the convex hull of $v$ and $v'$.
\end{remark}

\begin{remark}
	\label{rem:alg_mutation}
	Combinatorial mutation is the operation on polytopes induced by taking the Newton polytopes of Laurent polynomials related by algebraic mutations~\cite{ACGK12}. In the two-dimensional case these are precisely the birational transformations
	\[
	\theta_{w,F}^\star(z^n) = F^{\langle w,n \rangle}z^n,
	\]
	where $F = (1+z^u)$ and $u \in w^\bot$ is a primitive lattice point.
\end{remark}

The effect of a mutation on the triple of inner normals to the edges of $P$ is also easy to describe. Recall $E_i$ denotes the edge of $P$ disjoint from $v_i$ for each $i \in \{1,2,3\}$, and let $w_i$ be the primitive inner normal vector to $E_i$. We have the following formula for the inner normal vectors of the polygon obtained by mutating $P$ along $E_i$,

\begin{equation}
\label{eq:PL_map}
w_j \mapsto
\begin{cases}
-w_i & \text{if $i=j$} \\
w_j + \max\{0,w_i\wedge w_j\} w_i & \text{otherwise.}
\end{cases}
\end{equation}

This transformation is illustrated in Figure~\ref{fig:changing_normals}. Note that there is a binary choice in this formula; the choice of the orientation of $M$ used to identify $\bigwedge^2 M$ with $\Z$.

\begin{figure}
	\makebox[\textwidth][c]{\includegraphics[width=0.8\textwidth]{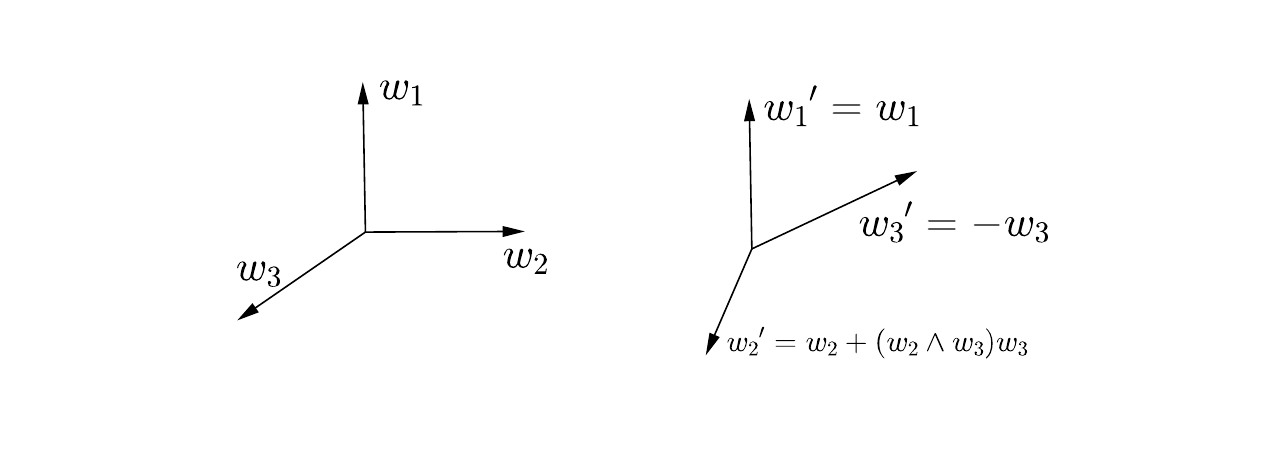}}%
	\caption{Normal vectors changing under a mutation}
	\label{fig:changing_normals}
\end{figure}

We describe a `normal form' for the Fano $P$ by mapping $P$ into $\R^2$ and making the edges of $P$ incident to the vertex $v_3$ orthogonal, at the expense of embedding $N$ into a finer lattice.

	
\begin{definition}
	\label{dfn:std_form}
	Consider the map $\rho\colon \Z^2 \rightarrow M$ defined by sending $\rho \colon e_i \mapsto w_i$ for each $i \in \{1,2\}$, where $\{e_i : i \in \{1,2\}\}$ is the standard basis of $\Z^2$. The dual map $\rho^\star\colon N \hookrightarrow \Z^2$ embeds $N$ into the lattice $(\Z^2)^\star$, and consequently embeds $P$ into the vector space $(\R^2)^\star$. We refer to the embedding $\rho^\star$ as the \emph{normal form} for $P$.
\end{definition}
Figure~\ref{fig:normal_form_ex} shows the Fano polygon associated to $\P^2$ in standard form.
\begin{figure}
	\centering
	\includegraphics[scale=1.2]{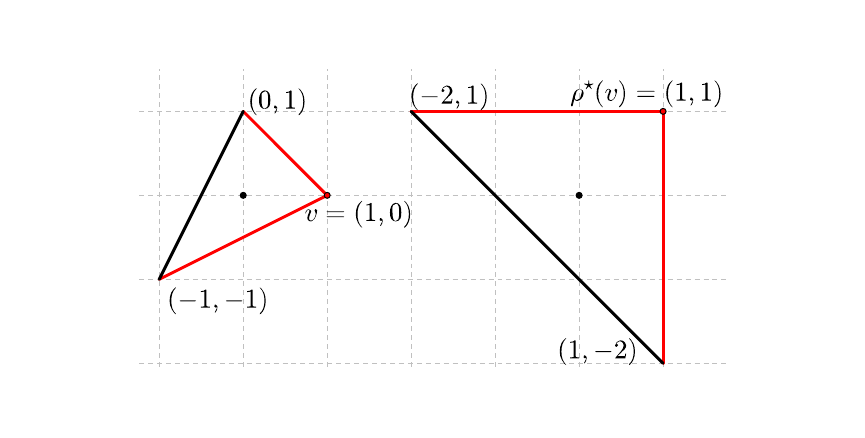}
	\caption{A triangle in normal form}
	\label{fig:normal_form_ex}
\end{figure}
\begin{remark}
	Definition~\ref{dfn:std_form} is intimately related to the construction of a \emph{cluster algebra} associated to $P$ described in \cite{KNP15}, c.f.~\cite{GHK2}. In that context one defines \emph{seed data} by fixing the lattice $\Z^m$ with its standard basis, and a skew-symmetric bilinear form $\{-,-\}$ on $\Z^m$. The mutation of seed data then involves making a choice of basis vector $e_k$ and applying the transformation
	\[ e_i' = 
	\begin{cases}
	-e_k & \text{if } i=k \\
	e_i + \max{(\{e_k,e_i\},0)}e_k & \text{otherwise}.
	\end{cases}
	\]
	Given a Fano polygon $Q$ with singularity content $m_Q = \sum_E{m_E}$, where $m_E$ is the singularity content\footnote{See~\cite[Definition~$2.4$]{SingCon} for the definition of the singularity content of a cone.} of $\Cone(E)$, and the sum is taken over the edges of $Q$, we can define a map $\hat{\rho} \colon \Z^{m_Q} \rightarrow M$ sending $m_E$ basis vectors to the inward-pointing normal vector to the edge $E$. We define a rank $2$ skew-symmetric $2$-form on $\Z^m$ by setting $\{u_1,u_2\} := \det(\hat{\rho}(u_1),\hat{\rho}(u_2))$. Restricting this definition to a pair of basis vectors we recover our previous definition of $\rho$.
\end{remark}

Putting $P$ in normal form embeds the edges $E_1$ and $E_2$ of $P$ into affine coordinate lines. This means we have a very simple description of the result of the pair of mutations in the edges $E_1$ and $E_2$. To make this precise we define the notion of polygon mutation \emph{with respect to a sublattice}.

\begin{definition}
	Given an inclusion $\rho^\star\colon N \hookrightarrow \Z^2$, a Fano polygon $Q \subset N_\R$, a vector $w \in (\Z^2)^\star$, and a line segment $F = \rho^\star(F')$ -- where $F' \subset w^\bot$ -- we define the \emph{mutation with respect to $N$} as follows,
	\[
	\mut_{w,\rho^\star}(\rho^\star(Q),F') := \rho^\star(\mut_{\rho(w)}(Q,F)).
	\]
\end{definition}

\begin{lem}
	\label{lem:facet_length}
	Put the Fano polygon $P$ in normal form. The mutations of $\rho^\star(P)$ with respect to $N$ in edges $\rho^\star(E_1)$ and $\rho^\star(E_2)$ have factors
	\begin{align*}
	F_1 = \conv{(0,0),(\pm s,0)} && \textrm{and} && F_2 = \conv{(0,0),(0,\pm s)}
	\end{align*}
	\noindent respectively, where $s$ is the index $[\Z^2 : N]$.
\end{lem}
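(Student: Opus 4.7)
The plan is to reduce the lemma to a short computation in linear algebra, built around the explicit formula $\rho^\star(n) = (w_1(n), w_2(n))$ coming from the definition of $\rho^\star$ as the dual of $\rho\colon e_i \mapsto w_i$. The two ingredients are an index identity for $s$ and a direct evaluation of $\rho^\star$ on a primitive generator of $w_i^\perp \cap N$.

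First I would establish $s = |\det(w_1, w_2)|$. In any basis of $M$, the matrix of $\rho$ has columns $w_1$ and $w_2$, so its image has index $|\det(w_1, w_2)|$ in $M$. The dual $\rho^\star$ is represented by the transpose, whose determinant has the same absolute value, so $[\Z^2 : \rho^\star(N)] = |\det(w_1, w_2)|$.

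Second, I would identify the factor of the mutation at $E_i$ as $\conv{0, u_i}$, where $u_i \in N$ is a primitive generator of $w_i^\perp \cap N$. By construction, $\rho^\star(u_1)$ has first coordinate $w_1(u_1) = 0$, and a short calculation in a basis $\{f_1, f_2\}$ of $N$ with dual basis of $M$ --- writing $w_j = \alpha_j f_1^\star + \beta_j f_2^\star$ and taking $u_1 = -\beta_1 f_1 + \alpha_1 f_2$ (which is primitive since $\gcd(\alpha_1, \beta_1) = 1$) --- yields $w_2(u_1) = \alpha_1\beta_2 - \beta_1\alpha_2 = \pm\det(w_1, w_2) = \pm s$. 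The analogous computation for $u_2$ places $\rho^\star(u_2)$ on the complementary coordinate axis with value $\pm s$. Reading off the factors of the induced mutations with respect to $N$ then produces the $F_1$ and $F_2$ stated in the lemma.

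The main obstacle is entirely bookkeeping: reconciling the binary choices --- the orientation of $M$ that fixes the sign in $\bigwedge^2 M \cong \Z$, the sign of each primitive generator $u_i$, and the labelling of coordinate axes by the normal form --- to match the $\pm$ signs in the statement. No geometric or combinatorial input beyond the determinant identity and elementary lattice duality is required.
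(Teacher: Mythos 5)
Your proposal is correct and follows essentially the same route as the paper's proof: both identify the factor as the image under $\rho^\star$ of a primitive vector parallel to $E_i$, observe that one coordinate vanishes while the other equals $\pm\det(w_1,w_2)$, and identify this determinant with the index $s=[\Z^2:N]$ (the paper phrases the coordinate computation via the functional $\langle -,\nu_1\rangle = \pm\, w_1\wedge -$ rather than your explicit basis calculation, but it is the same determinant identity). The residual $\pm$ and axis-labelling ambiguities you flag as bookkeeping are likewise left unresolved in the paper, so nothing essential is missing.
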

\begin{proof}
	The factors $F_i$ are, by definition, line segments such that the origin is a vertex. Since $F_i$ lies in $w_i^\bot$,
	\begin{align*}
	F_1 = \conv{(0,0),(k_1,0)} && \textrm{and} && F_2 = \conv{(0,0),(0,k_2)}
	\end{align*}
	\noindent for some integers $k_1$ and $k_2$. To see that $k_i = \pm s$ for each $i \in \{1,2\}$ we observe that
	\[
	k_1 =  \langle e_2,\rho^\star(\nu_1)\rangle = \nu_1(\rho(e_2)) = \langle w_2, \nu_1\rangle,
	\]
	where $\nu_i$ is a primitive integral vector parallel to $E_i$. Fixing an orientation of $M_\R$, the functional $\langle -,\nu_1\rangle\colon M \rightarrow \Z$ is equal to $w_1\wedge -$. Since $s$ is the determinant of $\rho$, $\abs{w_1\wedge w_2} = s$.
\end{proof}

We fix notation for the local indices of cones over edges in $\rho^\star(P)$ and its mutations in $E_1$ and $E_2$. For each $i \in \{1,2\}$ let $r_i$ be the local index of the cone over the edge $E_i$, and let $r'_i$ be the local index of the cone over the edge $E'_i$, where $E'_i$ is the edge formed by mutating at $E_i$. 
	\begin{itemize}
		\item Let $r_i$ be the local index of the cone over the edge $E_i$.
		\item Let $r'_i$ be the local index of the cone over the edge $E'_i$, where $E'_i$ is the edge formed by mutating at $E_i$. 
		\item Let $R_i = r_i + r'_i$.
	\end{itemize}

\begin{lem}
	\label{lem:standard_scale}
	Defining $R_i$ and $r_i$ as above for $i \in \{1,2\}$, we have that $R_1/r_2 = R_2/r_1 = s$.
\end{lem}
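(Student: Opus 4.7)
The plan is to reduce both sides of the claimed equality to explicit expressions in the Markov numbers $(a_1,a_2,a_3)$, exploiting the normal form of Definition~\ref{dfn:std_form}. By construction $\rho^\star(x) = (w_1(x),w_2(x))$ for any $x \in N$, and the identifications $r_{E_i} = a_i$ together with $v_3 \in E_1 \cap E_2$ give $\rho^\star(v_3) = (-a_1,-a_2)$. Applying $w_1$ to the defining relation $a_1^2 v_1 + a_2^2 v_2 + a_3^2 v_3 = 0$, using $w_1(v_2) = w_1(v_3) = -a_1$, and then the Markov identity $a_1^2+a_2^2+a_3^2 = 3 a_1 a_2 a_3$, immediately yields $w_1(v_1) = 3 a_2 a_3 - a_1$; symmetrically $w_2(v_2) = 3 a_1 a_3 - a_2$.

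Next I would compute $r'_i$. Formula~\eqref{eq:PL_map} shows that the edge $E'_i$ of $\mut_{w_i}(P)$ has inner normal $-w_i$. Writing $\mut_{w_i}(P)$ via Lemma~\ref{lem:P2_mutations} as the convex hull of a chosen vertex $q$ of $E_i$, the far vertex $v_i$, and the translated point $v_i + \langle w_i, v_i \rangle u_i$, one checks that $w_i$ takes the value $-r_i$ at $q$ and the value $w_i(v_i)$ at the other two vertices; hence $E'_i$ is the edge from $v_i$ to $v_i + \langle w_i, v_i \rangle u_i$, and $r'_i = -(-w_i)(v_i) = w_i(v_i)$. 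Combining with the previous step, $R_1 = r_1 + r'_1 = 3 a_2 a_3$ and $R_2 = r_2 + r'_2 = 3 a_1 a_3$, whence $R_1/r_2 = R_2/r_1 = 3 a_3$.

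It remains to identify $s$ as $3 a_3$. Since $|\det \rho^\star| = s$, for any $x,y \in N$ one has $|\rho^\star(x) \wedge \rho^\star(y)| = s \cdot |x \wedge y|$. A direct determinant computation using the coordinates above gives $|\rho^\star(v_1) \wedge \rho^\star(v_3)| = 3 a_2^2 a_3$. For the other factor, the toric description of $P$ shows that $\Cone(v_1,v_3)$ realises the cyclic quotient singularity $\tfrac{1}{a_2^2}(a_1^2,a_3^2)$ of $\P(a_1^2,a_2^2,a_3^2)$, of order $a_2^2$ by the pairwise coprimality of the Markov numbers; hence $|v_1 \wedge v_3| = a_2^2$. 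Dividing yields $s = 3 a_3$, completing the argument. The only point requiring any care is the identification of $E'_i$, which is settled by direct inspection of the values of $w_i$ at the three vertices of $\mut_{w_i}(P)$.
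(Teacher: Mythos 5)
Your proof is correct, but it takes a substantially more computational route than the paper's. The paper argues purely structurally: the interval $[-r_1,\,r_1']$ on the first coordinate axis is exactly the image of $E_2$ under the first coordinate projection, so $R_1 = r_1 + r_1'$ equals the lattice length of $\rho^\star(E_2)$ in $\Z^2$; and the map $\rho^\star$ scales tangent directions along $E_2$ by the factor $s$ (this is essentially the content of Lemma~\ref{lem:facet_length}), while $\ell(E_2) = r_2$. Hence $R_1 = s\,r_2$ without ever writing down a Markov number. Your proof instead reduces everything to explicit coordinates: you use the relation $\sum_i a_i^2 v_i = 0$ and the Markov equation to compute $r'_i$, and then prove $s = 3a_3$ directly via a determinant computation together with the pairwise coprimality of Markov numbers. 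This works, but it has two drawbacks relative to the paper. First, you re-derive $s = 3a_3$ in passing, which is precisely the content of Lemma~\ref{lem:comparison}, proved separately in the paper by appealing to the transformation of wedge products of normals under mutation; your version folds that lemma into this one. Second, your identification $|v_1 \wedge v_3| = a_2^2$ invokes the pairwise coprimality of Markov triples, a true but nontrivial arithmetic fact that the paper's argument entirely avoids. On the other hand, your explicit identification of $E'_i$ via evaluating $w_i$ at the three vertices of $\mut_{w_i}(P)$ is a clean and careful step, and your computation $R_1 = 3a_2a_3$, $R_2 = 3a_1a_3$ makes the Markov structure of the scaling very concrete — something the paper's proof leaves implicit. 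Both arguments are sound; the paper's is leaner, yours is more explicit.
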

\begin{proof}
	Since $\ell(E_i)$ is equal to the local index of the cone over $E_i$, the mutation in these edges completely removes the edge. Consequently the mutation of $\rho^\star(P)$ with respect to $N$ also removes an entire edge. If $p$ is a point on $E_i$, $r_i = \langle w_i, p \rangle = \langle e_i, \rho^\star(p)\rangle$, thus the local index of the cone over $\rho^\star(E_i)$ equals that of the cone over the edge $E_i$. Hence $R_i = r_{3-i}s$ for $i \in \{1,2\}$ by Lemma~\ref{lem:facet_length}.
\end{proof}
\section{Gluing Fano polygons}
\label{sec:diagrams}

We now consider a construction of the support of a \emph{scattering diagram}\footnote{See \S\ref{sec:GS_background} for a special case and~\cite{GPS,TropGeom,GS1} for more details.} in terms of polygon mutations. In particular we build a collection of triangles using successive mutations, and use the edges of these polygons to define a collection of rays. The main objects of study in this section are collections of triangles formed by mutations which we call \emph{diagrams}.

Throughout this section we fix a Fano polygon $P \subset N_\R$ associated to a Markov triple $\ba = (a_1,a_2,a_3)$ and assume, without loss of generality, that $a_1 \leq a_3$, and $a_2 \leq a_3$. For each $i \in \{1,2,3\}$ let $v_i$ and $E_i$ denote the vertex and edge associated with $a_i$.

Let $\Diag_0(P) := \{P\}$ and, for $i \in \{1,2\}$, let $P^i_1$ denote the polygon obtained by mutating $P$ at the edge $E_i$ while fixing $E_{3-i}$. Let $E^i_1$ be the edge of $P^i_1$ equal to $E_{3-i}$, let $F^i_1$ be the edge of $P^i_1$ corresponding to $E_3$ after mutation, and let $G^i_1$ be the remaining edge of $P^i_1$ (corresponding to $E_i$ under mutation). For any $k \in \Z_{> 0}$, let $P^i_{k+1}$ be the polygon obtained by mutating $P^i_k$ at $E^i_k$ and fixing edge $F^i_k$. Let $E^i_{k+1}$ be the edge corresponding to $G^i_k$ after mutation, $G^i_{k+1}$ the edge corresponding to $E^i_k$, and let $F^i_{k+1}$ be the edge of $P^i_{k+1}$ equal to $F^i_k$.

Given a Fano polygon $Q$, let $R$ be a polygon obtained from $Q$ by mutating along edge $E$ of $Q$ which fixes an edge $F$. Moreover let $E'$ be the edge of $R$ corresponding to $E$ after mutation. Let $S_{Q,E,F}$ denote the (unique) map $x \mapsto \lambda x + u$ such that $\lambda \in \R_{>0}$, $u \in N_\R$, and $S_{Q,E,F}(E') = E$. We use these maps to glue together an infinite collection of Fano polygons.

\begin{definition}
	\label{dfn:diagram}
	 Let $T^i_1 := S_{P,E_i,E_{3-i}}$, and set $\fu^i_1 := T^i_1(P^i_1)$ for $i \in \{1,2\}$. For $k \in \Z_{> 0}$ we set $T^i_{k+1} := T^i_k\circ S_{P^i_k,E^i_k,F^i_k}$ and $\fu^i_{k+1} := T^i_{k+1}(P^i_{k+1})$. Let $\Diag_0(P) := P$, and for $k \in \Z_{\geq 0}$ let 
	 \[
	 	\Diag_{k+1}(P) := \Diag_k(P) \cup \{ \fu^1_{k+1}, \fu^2_{k+1} \},
	 \]
	and $\Diag(P):= \bigcup_{k \geq 0}{\Diag_k(P)}$. We also write $\fu^i_j(P)$ for the triangle $\fu^i_j \in \Diag(P)$, where $i \in \{1,2\}$ and $j \in \Z_{\geq 0}$. We refer to any set $D_k(P)$ or $D(P)$ as a \emph{diagram}.
\end{definition}

Given an affine linear map $L \colon \R^2 \to \R^2$ we define $L(\Diag(P)) := \{L(\fu) : \fu \in \Diag(P)\}$.

\begin{remark}
	The condition that $a_3$ is maximal in the triple $(a_1,a_2,a_3)$ means that the vertex $v_3$ of $P$ is uniquely determined unless $a_1 = a_2 = a_3 = 1$. In this case we may label the vertices arbitrarily, and our convention will be to write $\Diag_k(P,v)$ where $v \in \V{P}$ is the vertex chosen to be $v_3$. 
\end{remark}


\begin{figure}
	\makebox[\textwidth][c]{\includegraphics[scale=0.85]{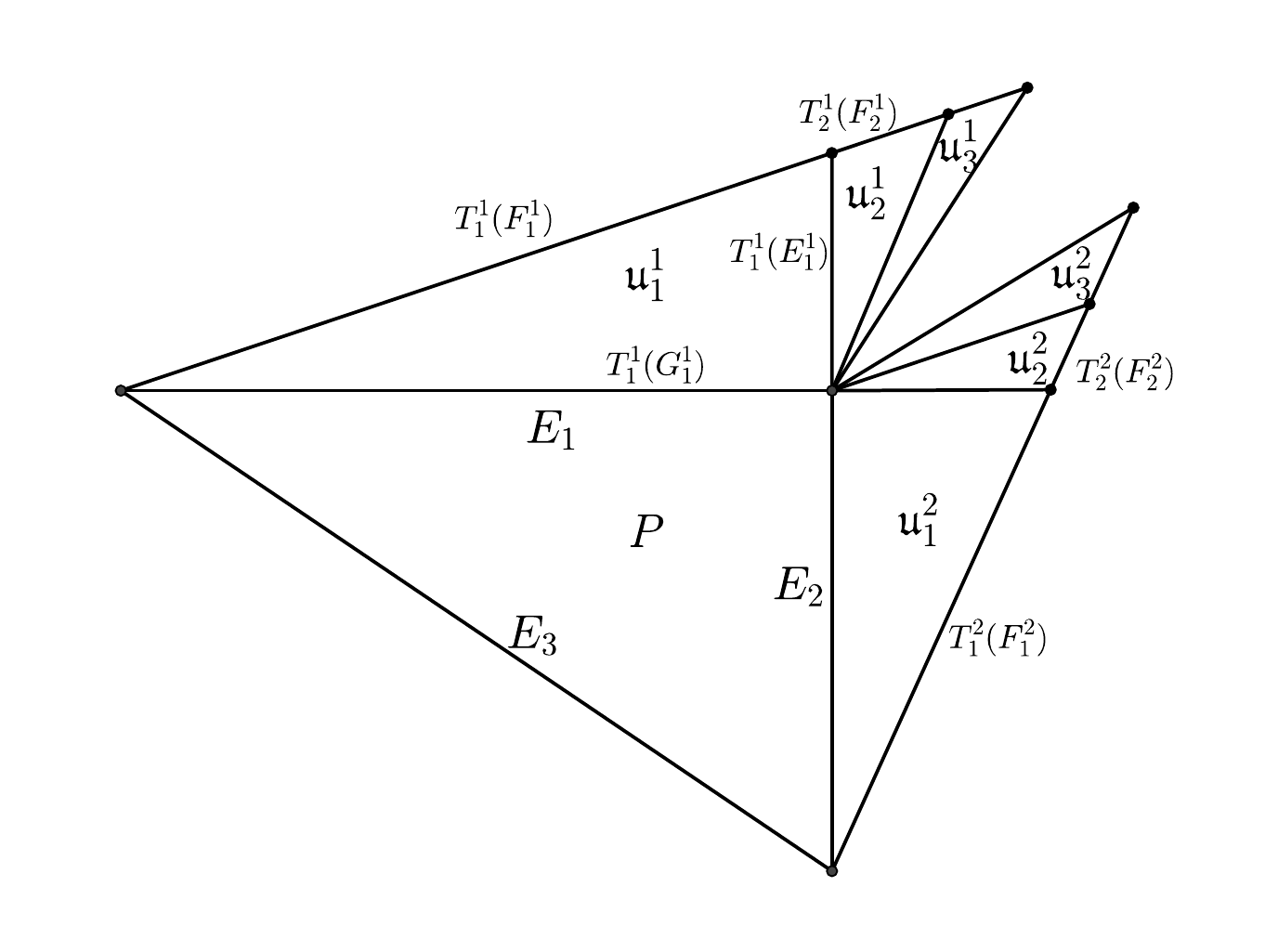}}%
	\caption{Triangles appearing in $\Diag_3(P)$.}
	\label{fig:adding_rays}
\end{figure}

For the remainder of this section we assume $P$ is in standard form with respect to $v$, that is, we embed $P$ into $\Z^2$ via the map $\rho^\star$, where $\rho$ sends the standard basis in $\Z^2$ to the primitive inner normal vectors to $E_1$ and $E_2$; recall that we let $s := \det(\rho)$. We slightly abuse notation and write $\fu$ for $\rho^\star(\fu)$ for each $\fu \in \Diag(P)$ (including $P$) in what follows.

We use transformation \eqref{eq:PL_map} to describe the normal vectors of the region $\fu^i_j$. To do this we define vectors $u^i_{j,k}$ normal to edges of $\fu^i_j$, illustrated in Figure~\ref{fig:normals}.

\begin{definition}
	Let $H_1$ and $H_2$ denote the edges of $\fu^i_j(P)$ incident to $v$. Order this pair of edges so that $H_1$ is an edge of $\fu^i_{j-1}(P)$, and $H_2$ is an edge of $\fu^i_{j+1}(P)$. Let $u^i_{j,1}$,~$u^i_{j,2} \in \R^2$ be the inner normal vectors to $H_1$ and $H_2$ in $\fu^i_j(P)$ respectively.
\end{definition}

The first few terms of the sequences $u^1_{j,1}$, and $u^1_{j,2}$ are shown below. Note these two sequences are equal to the two rows displayed; the arrows indicate how the normal vectors transform under mutation.

\begin{figure}
	\centering
	\begin{minipage}{0.45\textwidth}
		\centering
		\includegraphics[width=\textwidth]{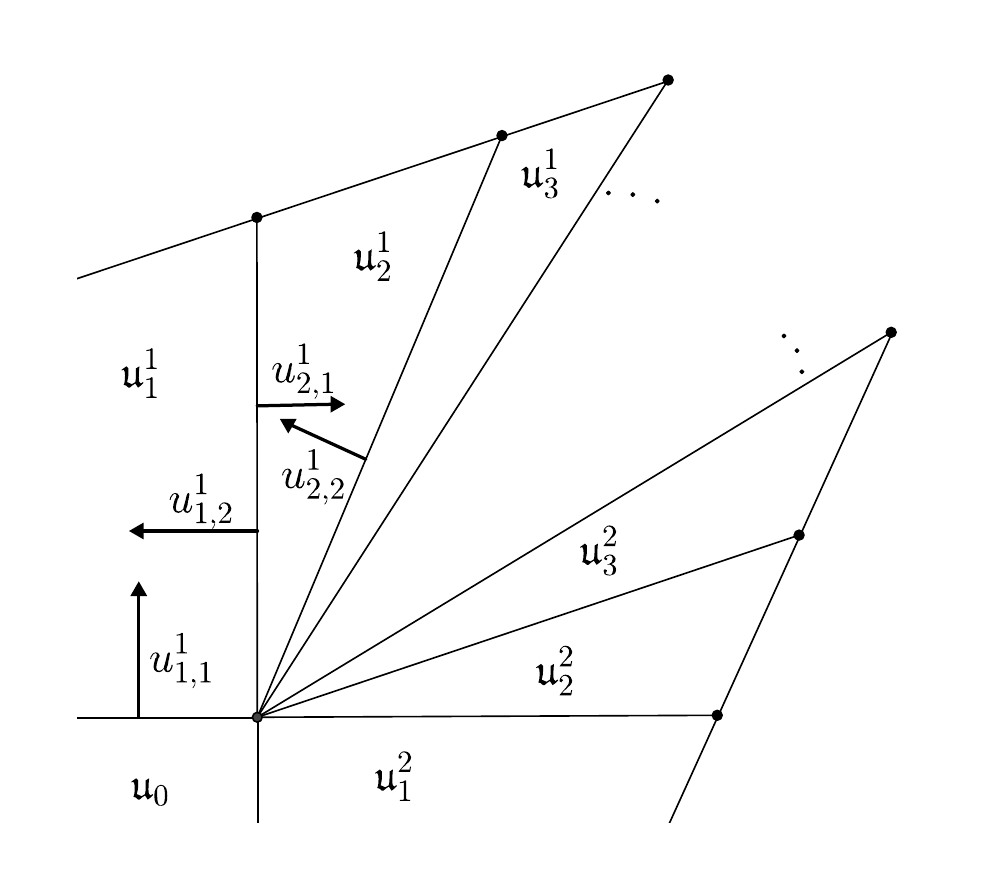}
		\caption{Normal vectors $u^i_{j,k}$.}
		\label{fig:normals}
	\end{minipage}\hfill
	\begin{minipage}{0.45\textwidth}
		\centering
		\includegraphics[width=\textwidth]{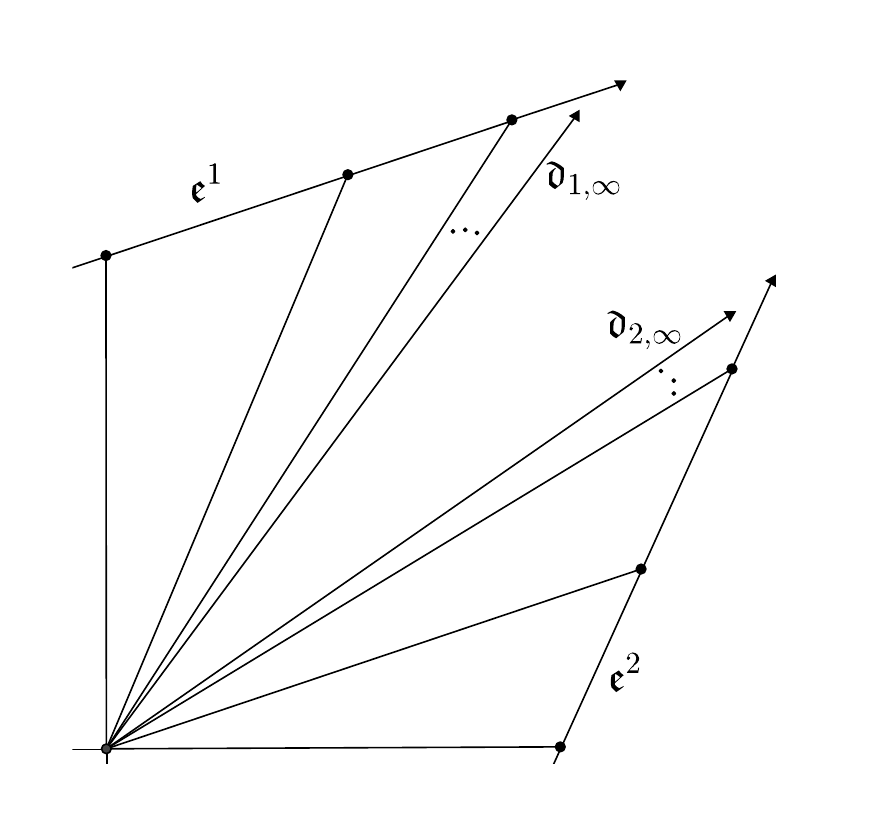}
		\caption{Rays $\fe^i$ and $\d^i_\infty$.}
		\label{fig:slope_bound}
	\end{minipage}
\end{figure}

\begin{align*}
\xymatrix{
	(0,1) \ar[dr] & (1,0) \ar[dr] & (s,-1) \ar[dr] & (s^2-1,-s) \ar[dr] & \cdots \ar[dr] & u^1_{1,j} \\
	(-1,0) \ar[ur] & (-s,1) \ar[ur] & (1-s^2,s) \ar[ur] & (2s-s^3,s^2-1) \ar[ur] & \cdots \ar[ur] & u^2_{1,j}
}
\end{align*}
There is an analogous sequence of vectors $u^2_{j,k}$ for $k \in \{1,2\}$. Recalling that inner normal vectors to edges transform under mutation by the piecewise linear transformation \eqref{eq:PL_map} we compute that
\begin{align*}
u^1_{j,1} = (x_j,-x_{j-1}) && \textrm{and} && u^2_{j,1} = (-x_{j+1},x_j),
\end{align*}

for values $x_j$ generated by the recursive relation
\[
x_{j+1} = sx_j - x_{j-1}
\]
such that $x_0 = -1$, and $x_1 = 0$.

\begin{lem}
	\label{lem:normal_asymptotes}
	For each $i \in \{1,2\}$ let $u^i_{\infty,1} := \lim_{j \to \infty} u^i_{j,1}$. The linear span of the vector $u^i_{\infty,1}$ is a line $L$ in $\R^2$ with slope $\frac{-s+\sqrt{s^2-4}}{2}$. 
\end{lem}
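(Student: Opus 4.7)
The plan is to reduce the geometric statement to the asymptotics of a linear recurrence, which can be solved in closed form.

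First, I would observe that the characteristic polynomial of the recursion $x_{j+1} = sx_j - x_{j-1}$ is $\lambda^2 - s\lambda + 1$, whose roots
\[
\lambda_{\pm} = \frac{s \pm \sqrt{s^2-4}}{2}
\]
are real for $s \geq 2$ (the relevant range, since $s$ is the lattice index coming from a Fano polygon for $\P(a_1^2,a_2^2,a_3^2)$) and satisfy $\lambda_+\lambda_- = 1$. For $s > 2$ they are distinct, so the general solution takes the form $x_j = A\lambda_+^j + B\lambda_-^j$. Solving with $x_0 = -1$, $x_1 = 0$ gives $A = -\lambda_-/(\lambda_+ - \lambda_-)$ and $B = \lambda_+/(\lambda_+ - \lambda_-)$, both nonzero. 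In the degenerate case $s = 2$ one has $x_j = j-1$ directly, and the argument below goes through unchanged; when $s = 2$ the formula $(-s+\sqrt{s^2-4})/2 = -1$ is consistent with $-x_{j-1}/x_j \to -1$, so I would mention this case separately.

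Next, since $|\lambda_+| > 1 > |\lambda_-|$ for $s > 2$, I would compute
\[
\frac{x_{j-1}}{x_j} = \frac{A\lambda_+^{j-1} + B\lambda_-^{j-1}}{A\lambda_+^{j} + B\lambda_-^{j}} \longrightarrow \frac{1}{\lambda_+} = \lambda_-
\quad \text{as } j \to \infty.
\]
The vectors $u^1_{j,1} = (x_j, -x_{j-1})$ themselves diverge in norm, so I interpret $u^1_{\infty,1}$ as the limit in $\P(\R^2)$ of the lines $\R\cdot u^1_{j,1}$; the slope of this projective limit equals the limit of $-x_{j-1}/x_j$, which by the computation above is $-\lambda_- = \tfrac{-s+\sqrt{s^2-4}}{2}$, exactly the value claimed.

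Finally, to check that the line $L$ does not depend on $i$, I would apply the same analysis to $u^2_{j,1} = (-x_{j+1}, x_j)$: its slope is $-x_j/x_{j+1}$, which again tends to $-\lambda_-$ by the same argument (the sequence $(x_j)$ is the \emph{same} one, since the recurrence and initial conditions coincide after the displayed table). Hence both limiting directions span the same line $L$ of slope $(-s+\sqrt{s^2-4})/2$. There is no real obstacle here: the only subtle point is the projective interpretation of the ``limit of the $u^i_{j,1}$'', which I would make explicit at the start so that the algebraic computation of a limiting ratio of coordinates is known to capture the correct geometric notion.
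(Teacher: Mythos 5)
Your proof is correct and reaches the same conclusion, but by a slightly different route. The paper works directly with the sequence of slopes $m_j$ and derives a Möbius-type recurrence $m_{j+1}m_j + sm_{j+1} + 1 = 0$ from the linear recursion on the $x_j$, then identifies the limiting slope as a fixed point of that map; you instead solve the linear recurrence $x_{j+1} = sx_j - x_{j-1}$ in closed form via the characteristic polynomial and take the limit of the coordinate ratio. Both are sound, and the answers agree: $-\lambda_- = \frac{-s+\sqrt{s^2-4}}{2}$. Your version has the modest advantages of making the convergence of $x_{j-1}/x_j$ explicit (the paper leaves convergence implicit) and of clarifying up front that the ``limit'' $u^i_{\infty,1}$ must be interpreted projectively since the vectors themselves grow without bound. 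Two small remarks: the constants $A$ and $B$ in your solution have their signs swapped (one finds $A = \lambda_-/(\lambda_+-\lambda_-)$ and $B = -\lambda_+/(\lambda_+-\lambda_-)$), though this has no bearing on the argument since only $A\neq 0$ matters; and the degenerate case $s=2$ you carefully include never actually arises here, since Lemma~\ref{lem:comparison} gives $s = 3a_3 \geq 3$ for every Markov triple.
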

\begin{proof}
	The line spanned by $u^1_{j,1}$ has slope $m_j = -x_j/x_{j+1}$. From the recurrence relation above we have that $m_{j+1}m_j + m_js + 1 = 0$, from which we compute the limiting slope. The line spanned by $u^1_{j,2}$ has slope $m_{j-1}$, and hence the same limiting slope. 
\end{proof}

\begin{remark}
	Repeating the calculation appearing in Lemma~\ref{lem:normal_asymptotes} for the sequences $u^2_{j,1}$ and $u^2_{j,2}$, the sequence of slopes of the corresponding lines converges to $\frac{-s-\sqrt{s^2-4}}{2}$.
\end{remark}

We compute the slopes of the rays in $T_v\R^2$ generated by edges of the chambers $\fu^i_j$.
\begin{definition}
	\label{dfn:rays_from_edges}
	Let $\d^i_j$ be the sequence of rays generated by vectors $v^i_j$ for $i \in \{1,2\}$, $j \in \Z_{\geq 0}$. The vectors $v^i_j$ are defined recursively as follows.
	\begin{itemize}
		\item $v^1_0 := (-1,0)^T$,~$v^1_1 := (0,1)^T$,
		\item $v^2_0 := (0,-1)^T$,~$v^2_1 := (1,0)^T$, and,
		\item $v^i_{j-1} + v^i_{j+1} = s v^i_j$, recalling that $s = \det(\rho)$.
	\end{itemize}
\end{definition}

\begin{lem}
	\label{lem:recursive_rays}
	The set of rays $\{\d^i_j : i \in \{1,2\}, ~j \in \Z_{>0}\}$ in $T_v\R^2$ is equal to the set of rays in $T_v\R^2$ induced by edges of the triangles in $\Diag(P)$ incident to $v$.
\end{lem}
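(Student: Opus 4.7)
The plan is to prove the lemma by induction on the level $j$, maintaining the invariant that the two edges of $\fu^i_j$ incident to $v$ are supported on the rays $\d^i_j$ and $\d^i_{j+1}$ (with the convention $\fu^i_0 := P$). Since $\Diag(P) = \bigcup_{k \geq 0} \Diag_k(P)$ and each level adds exactly the triangles $\fu^1_k$ and $\fu^2_k$, this invariant immediately identifies the set of rays induced by edges incident to $v$ with the sequence $\{\d^i_j : i \in \{1,2\}, j \geq 1\}$.

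The base case is immediate from Definition~\ref{dfn:std_form}: under $\rho^\star$, the two edges $E_1, E_2$ of $P$ at $v$ are aligned with the coordinate half-axes (as illustrated in Figure~\ref{fig:normal_form_ex}), generating rays in the directions $v^1_1 = (0,1)$ and $v^2_1 = (1,0)$ respectively.

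For the inductive step I would analyze how the construction passes from $\fu^i_j$ to $\fu^i_{j+1}$. By construction the gluing map $T^i_{j+1}$ identifies the shared edge of $\fu^i_j$ and $\fu^i_{j+1}$ incident to $v$, and the inductive hypothesis identifies this edge with $\d^i_{j+1}$. To determine the remaining new edge I would apply Lemma~\ref{lem:P2_mutations} in normal form: the vertex replacing the mutated one is $v' + \langle w, v'\rangle u$ for the appropriate $w, v', u$ determined by Lemma~\ref{lem:facet_length}. Combining this with the transformation rule \eqref{eq:PL_map} for the inner normals $u^i_{j,k}$ — which explicitly satisfy $u^i_{j,1} = (x_j, -x_{j-1})$ with $x_{j+1} = sx_j - x_{j-1}$ — one extracts the direction at $v$ to the new vertex and verifies it satisfies the recurrence $v^i_{j+2} = s v^i_{j+1} - v^i_j$ of Definition~\ref{dfn:rays_from_edges}, so that the new ray is precisely $\d^i_{j+2}$.

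I expect the main obstacle to be bookkeeping rather than conceptual difficulty. The composition $T^i_{j+1} = T^i_j \circ S_{P^i_j, E^i_j, F^i_j}$ and the passage to normal form must be matched carefully against the conventions fixing initial vectors $v^i_0, v^i_1$ and the sign of $s = \det(\rho)$; the recurrence on normals dualises to the recurrence on edge directions via the perpendicularity relation $\langle u^i_{j,k}, v^i_{j+k-1} \rangle = 0$, but verifying that the orientation choices all conspire correctly (so that the output of the mutation formula lies on the ray $\d^i_{j+2}$ with the correct orientation and not its opposite) is the subtle point. Once the first two inductive steps — producing $\fu^i_1$ and $\fu^i_2$ — are checked by hand using the explicit mutation in normal form, the rest of the induction proceeds uniformly from the linear recurrence.
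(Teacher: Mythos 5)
Your argument is essentially the paper's own proof: the paper likewise reads off the edge directions at $v$ as the perpendiculars $(x_{j-1},x_j)$ of the normals $u^i_{j,1}=(x_j,-x_{j-1})$ computed from the transformation \eqref{eq:PL_map}, and observes that these satisfy the recurrence defining the vectors $v^i_j$, which is exactly your dualisation step. The only caveat is a harmless index shift in your stated perpendicularity relation (the pairing should be with $v^i_{j+k-2}$ rather than $v^i_{j+k-1}$), which is precisely the bookkeeping you flag and which your hand-check of the first two steps would resolve.
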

\begin{proof}
	There are two edges of each chamber $\fu^i_j$ incident to $v$. Since, for $j \in \Z_{>0}$, ${\left(u^1_{j,1}\right)}^\bot = (x_{j-1},x_j)$, and $x_{j+1} = sx_j - x_{j-1}$, these vectors satisfy the recurrence relation defining $v^i_j$.
\end{proof}

In particular, fixing $i \in \{1,2\}$, the rays $\d^i_j$ converge to a pair of asymptotes as $j \to \infty$.
\begin{lem}
	\label{lem:asymptote_slope}
	The sequence of rays $\d^i_j$ converge to the ray $\d^i_\infty$ with slope
	\[
	m^i_\infty := \frac{s - (-1)^i\sqrt{s^2 - 4}}{2}.
	\]
\end{lem}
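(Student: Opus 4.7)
The plan is to solve the linear recurrence that defines the vectors $v^i_j$ in closed form, and then take the limit of their slopes directly.

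First, I would observe that the relation $v^i_{j-1} + v^i_{j+1} = s v^i_j$ applies to each coordinate separately, giving a second-order scalar recurrence with characteristic polynomial $\lambda^2 - s\lambda + 1 = 0$. Its roots $\lambda_\pm = (s \pm \sqrt{s^2-4})/2$ satisfy $\lambda_+ \lambda_- = 1$; assuming $s > 2$ (the generic case, covering in particular $\P^2$ where $s = 3$) they are real, positive and distinct, with $\lambda_+ > 1 > \lambda_- > 0$.

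Next, writing $v^1_j = (a_j, b_j)^T$ and using the initial conditions $v^1_0 = (-1,0)^T$, $v^1_1 = (0,1)^T$, I would obtain the closed forms
\[
a_j = \frac{\lambda_- \lambda_+^j - \lambda_+ \lambda_-^j}{\lambda_+ - \lambda_-}, \qquad b_j = \frac{\lambda_+^j - \lambda_-^j}{\lambda_+ - \lambda_-}.
\]
The slope of $\d^1_j$ is $b_j/a_j$; dividing numerator and denominator by $\lambda_+^j$ and using $(\lambda_-/\lambda_+)^j = \lambda_-^{2j} \to 0$ gives
\[
\lim_{j \to \infty} \frac{b_j}{a_j} = \frac{1}{\lambda_-} = \lambda_+ = \frac{s + \sqrt{s^2-4}}{2},
\]
which matches $m^1_\infty$. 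The case $i = 2$ is identical after swapping the two coordinates in the initial data (by Definition~\ref{dfn:rays_from_edges}), so the limiting slope becomes $\lambda_-^{-1}$ replaced by $\lambda_+^{-1} = \lambda_-$, giving $m^2_\infty = (s - \sqrt{s^2-4})/2$ as required.

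To upgrade convergence of slopes to convergence of rays, I would note that the coefficient of the dominant term $\lambda_+^j$ in each coordinate of $v^i_j$ has a definite sign, so the $v^i_j$ eventually lie in a fixed quadrant; hence the unit vectors $v^i_j/\|v^i_j\|$ converge along with the slopes, and the rays $\d^i_j$ themselves converge to a well-defined limiting ray.

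The main obstacle is minor and concerns the degenerate case $s = 2$, where the characteristic polynomial has a double root and the closed form involves $j \lambda^j$ rather than two distinct exponentials. There the formula gives $m^i_\infty = 1$, and the recurrence degenerates to $v_{j+1} - v_j = v_j - v_{j-1}$, making $v_j$ affine in $j$ so that the direction converges directly to the common difference vector. The remaining cases $s \leq 1$ do not arise for Fano polygons in the Markov family under consideration.
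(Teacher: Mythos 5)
Your proof is correct, and it takes a genuinely different route from the paper's. The paper's proof of this lemma is a one-liner: it defers to Lemma~\ref{lem:normal_asymptotes}, whose proof writes the slope of $u^1_{j,1}$ as $m_j$ and derives from $x_{j+1}=sx_j-x_{j-1}$ a first-order rational recursion on the $m_j$ (a M\"obius iteration of the form $m_{j+1}=-1/(s+m_j)$), then reads off the fixed point of that recursion; the present lemma then observes that the limiting tangent direction must be orthogonal to that limiting normal, giving $-1/m_\infty = (s+\sqrt{s^2-4})/2$. You instead solve the second-order linear recurrence coordinatewise via the characteristic polynomial $\lambda^2 - s\lambda + 1 = 0$, obtain closed forms for $a_j$ and $b_j$, and compute $\lim b_j/a_j$ directly. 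Your approach buys two things the paper's argument leaves implicit: the closed form shows that the slope sequence actually converges, rather than merely identifying the candidate fixed point of the slope recursion, and the sign analysis on the dominant coefficients upgrades convergence of slopes to convergence of rays (i.e.\ that the $v^i_j$ eventually stay in a fixed half-plane). The paper's approach is shorter and dovetails with the normal-vector bookkeeping used elsewhere in \S\ref{sec:diagrams}. One small remark: your discussion of the degenerate case $s=2$ is superfluous in this setting, since $s = 3a_3 \geq 3$ by Lemma~\ref{lem:comparison}, but it does no harm.
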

\begin{proof}
	Fixing an $i \in \{1,2\}$, the limit of the vectors $v^i_j$ as $j \to \infty$ lies in the normal space to the limit obtained in Lemma~\ref{lem:normal_asymptotes}.
\end{proof}

Each chamber $\fu^1_j$ has edges with inner normals $u^1_{j,k}$ for $k \in \{1,2\}$, as well as the edge $F_j := T^1_j(F^1_j)$. It follows immediately from the transformation of normal vectors of a triangle under mutation that the sequence of inner normals vectors to the edges $F_j$ is constant. Thus these edges determine a ray --  which we denote $\fe^i$ for $i \in \{1,2\}$ -- illustrated in Figure~\ref{fig:slope_bound}. For each $i \in \{1,2\}$, let $\ell_i := \ell(E_i)$ be the lattice length of $E_i$.

\begin{lem}
	\label{lem:comparison}
	Fix a triangle $P$ associated to a Markov triple $\ba = (a_1,a_2,a_3)$, and consider $P$ in standard form with respect to the vertex $v_3$ corresponding to $a_3$. We have that $\ell_1 = a_1$,~$\ell_2 = a_2$ and $s=3a_3$.
\end{lem}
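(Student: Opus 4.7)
The equalities $\ell_1 = a_1$ and $\ell_2 = a_2$ follow immediately from the observation recorded earlier in this section: for the Fano polygon associated to $\P(a_1^2,a_2^2,a_3^2)$, the lattice length of $E_i$ equals $a_i$ for every $i$. So only the identity $s = 3a_3$ requires work.

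The plan is to compute the determinant $\det\bigl(w_i(v_j)\bigr)_{i,j\in\{1,2\}}$ in two different ways and compare. Since $\rho(e_i) = w_i$ and $s = |\det(\rho)|$, writing $v_1,v_2$ and $w_1,w_2$ in any pair of dual bases gives
\[
\det\bigl(w_i(v_j)\bigr) \;=\; \det(\rho)\cdot\det(v_1,v_2),
\]
so once both the determinant on the left and $|v_1\wedge v_2|$ are known, $s$ can be read off directly.

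For the matrix entries I intend to use two inputs: the local index identity $w_i \equiv -a_i$ on $E_i$, and the origin-centring relation $a_1^2 v_1 + a_2^2 v_2 + a_3^2 v_3 = 0$. The former yields $w_1(v_2) = -a_1$ and $w_2(v_1) = -a_2$ for free, since $v_2\in E_1$ and $v_1\in E_2$. Applying $w_1$ to the origin-centring relation forces $a_1^2 w_1(v_1) = a_1(a_2^2+a_3^2)$, and substituting the Markov equation $a_1^2+a_2^2+a_3^2 = 3a_1a_2a_3$ reduces this to $w_1(v_1) = 3a_2a_3 - a_1$; symmetrically $w_2(v_2) = 3a_1a_3 - a_2$. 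Expanding the $2\times 2$ determinant and invoking the Markov equation a second time should collapse the expression to $3a_3^3$.

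The final ingredient is the identification $|v_1\wedge v_2| = a_3^2$, which I expect to be the only step drawing on non-trivial external input. Since $v_1, v_2, v_3$ are primitive ray generators of the fan of $\P(a_1^2, a_2^2, a_3^2)$ satisfying $\sum a_i^2 v_i = 0$, the index $[N:\Z\langle v_1,v_2\rangle]$ equals the remaining weight $a_3^2$ provided the three weights are pairwise coprime; this holds because the entries of a Markov triple are classically pairwise coprime, and squaring preserves coprimality. Putting the two computations together yields $s = 3a_3^3/a_3^2 = 3a_3$, as required.
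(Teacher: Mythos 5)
Your argument is correct, but it takes a genuinely different route from the paper's. The paper invokes a result of Kasprzyk--Nill--Prince (\cite[Proposition~$3.17$]{KNP15}) describing how the triple $(s_1,s_2,s_3)$ of pairwise wedge products of inner normals transforms under mutation ($s_i \mapsto s_js_k - s_i$), observes that this is exactly the Markov mutation rescaled by a factor of $3$, and descends to the base case $\ba = (1,1,1)$ where $s_1 = s_2 = s_3 = 3$. You instead compute the $2\times 2$ matrix $(w_i(v_j))$ directly, feeding in the local-index identities $w_1(v_2) = -a_1$, $w_2(v_1) = -a_2$, the origin-centring relation $\sum a_i^2 v_i = 0$, and the Markov equation itself to get $\det(w_i(v_j)) = 3a_3^3$, and then divide by $|v_1\wedge v_2| = a_3^2$. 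Your computation checks out (I verified it, including the base case $(1,1,1)$), and the index calculation $[N:\Z\langle v_1,v_2\rangle] = a_3^2$ is justified since Markov triples are pairwise coprime. The trade-off: the paper's proof is very short once one grants the cited proposition, and it makes visible the precise compatibility between polygon mutation and Markov mutation; yours is more self-contained, avoiding the external citation, at the cost of relying on the weighted-projective-space description of $P$ and the classical coprimality fact. Both are perfectly valid.
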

\begin{proof}
	Fixing an orientation of $N_\R$, let $s_i$ denote the wedge product of the inner normal vectors to the edges of $P$ incident to $v_i$ for each $i \in \{1,2,3\}$. By \cite[Proposition~$3.17$]{KNP15}, mutating $P$ in edge $E_1$ sends the triple $(s_1,s_2,s_3)$ to $(s'_1,s_2,s_3)$, where $s_1' = s_2s_3-s_1$. Noting that 
	\[
	\frac{s'_1}{3} = 3\frac{s_2}{3}\frac{s_3}{3} - \frac{s_1}{3},
	\]
	and that $s_1=s_2=s_3=3$ if $P = \conv{(1,0),(0,1),(-1,-1)}$, we have that $\ba = (s_1/3,s_2/3,s_3/3)$.
	
	To prove $\ell_1 = a_1$ and $\ell_2 = a_2$, first recall that $\ell_2$ is equal to the local index of the edges $E_1$,~$E_2$ respectively. Additionally recalling that the triple of local indices of $P$ is given by the Markov triple $\ba$, the result follows.
\end{proof}

The following bound ensures that once the rays $\fe^i$ enter the region defined by the pair of asymptotes $v^i_\infty$ of the two sequences of rays they never emerge again. The rays $\fe^1$, $\fe^2$, $\d^1_\infty$, and $\d^2_\infty$ are illustrated in Figure~\ref{fig:slope_bound}. 

\begin{lem}
	\label{lem:slope_bound}
	Let $m_i$ be the slope of the rays $\fe^i$ for $i \in \{1,2\}$, then
	\[
	m^1_\infty > m_i > m^2_\infty.
	\]
\end{lem}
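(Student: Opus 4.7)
The plan is to reduce the statement to a pair of algebraic identities in the Markov data. By Lemma~\ref{lem:asymptote_slope}, $m^1_\infty$ and $m^2_\infty$ are the two roots of the quadratic $f(x) := x^2 - sx + 1$, so the condition $m^1_\infty > m > m^2_\infty$ is equivalent to $f(m) < 0$. The main work is to compute $m_1$ and $m_2$ explicitly in terms of $(a_1, a_2, a_3)$; once that is done, verifying $f(m_i) < 0$ will be a quick application of the Markov equation $a_1^2 + a_2^2 + a_3^2 = s a_1 a_2$ (with $s = 3a_3$).

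To compute $m_1$, I will place $P$ in the shifted normal form so that $v_3 = 0$ and (using Lemma~\ref{lem:comparison}) $P = \conv{0, (sa_2, 0), (0, sa_1)}$. Applying the mutation formula of Lemma~\ref{lem:P2_mutations} with mutating edge $E_1$, factor vector $u = (0,s) \in \rho^\star(N)$, and the identity $\langle w_1, v_1 \rangle = (a_2^2 + a_3^2)/a_1 = sa_2 - a_1$ (obtained from $\sum a_i^2 v_i = 0$ together with the Markov equation) produces $P^1_1$ with new vertex $(sa_2,\, s(sa_2 - a_1))$. The edge $F^1_1$ runs from the origin to this vertex and therefore has slope $s - a_1/a_2$. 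Each elementary gluing map $x \mapsto \lambda x + u$ with $\lambda > 0$ preserves directions, so $F_1 = T^1_1(F^1_1)$ inherits this slope. To conclude that every $F_j$ lies on the same line as $F_1$ (and not merely on a parallel line), I will establish the geometric lemma that $S = S_{Q,E,F}$ preserves the affine line through $F$: writing $F = \conv{v,v'}$ with $v$ the common vertex of $E$ and $F$, the condition $S(E') = E$ combined with the description of $E'$ in Lemma~\ref{lem:P2_mutations} forces $S(v') = v$ and, after a short computation, $S(v) = v + \lambda(v - v')$, both of which lie on the line through $v$ and $v'$. By induction, every $F_j$ lies on the $T^1_1$-image of this line, so $m_1 = s - a_1/a_2$. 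An analogous mutation starting from $E_2$ (equivalently, the symmetry swapping $E_1 \leftrightarrow E_2$ together with the coordinate exchange inverting slopes) yields $m_2 = a_1/(sa_1 - a_2)$.

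Finally, substituting into $f$ and applying the Markov equation gives
\[
f(m_1) = (a_1/a_2)^2 - s(a_1/a_2) + 1 = \frac{a_1^2 + a_2^2 - sa_1 a_2}{a_2^2} = -\frac{a_3^2}{a_2^2} < 0,
\]
and the analogous computation yields $f(m_2) = -a_3^2/(sa_1 - a_2)^2 < 0$, completing the proof. The main obstacle in this outline is establishing the collinearity of the $F_j$ in Step~1: a priori they only share a common direction, so the line-preservation property of each $S_{Q,E,F}$ is the essential geometric input; once this is in hand, the desired bound follows from a one-line computation using the Markov identity.
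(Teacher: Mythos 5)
Your proof is correct and reaches the same key reduction as the paper (the Markov identity $a_1^2+a_2^2+a_3^2 = s a_1 a_2$), but it arrives there by a somewhat different path and fills in a detail the paper elides. Where the paper computes the slope of $F^i_1$ by tracking the inner normal direction of $E_3$ through the piecewise-linear transformation \eqref{eq:PL_map} (obtaining normals $(\ell_1,\ell_2-s\ell_1)$ and $(\ell_1-s\ell_2,\ell_2)$), you instead compute the new vertex of $P^1_1$ directly from Lemma~\ref{lem:P2_mutations} and read off the slope from the vertex coordinates; both yield the slope set $\{s-a_1/a_2,\ a_1/(sa_1-a_2)\}$, though your labelling of which slope goes with $F^1_1$ is swapped relative to the paper's --- immaterial, since the lemma bounds both. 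For the inequality itself, the paper uses the symmetry $a_1 \leftrightarrow a_2$ to reduce to one case, writes the condition as $\sqrt{s^2-4} > |s - 2\ell_1/\ell_2|$, squares, and rearranges; you instead observe that $m^1_\infty, m^2_\infty$ are the roots of $f(x) = x^2 - sx + 1$ and check $f(m_i) = -a_3^2/(\cdot)^2 < 0$ directly for both $i$. The computation $f(m_1) = \tfrac{a_1^2+a_2^2 - sa_1a_2}{a_2^2} = -a_3^2/a_2^2$ and its analogue for $m_2$ are correct. Your most useful addition is the explicit argument that the segments $F_j = T^1_j(F^1_j)$ are collinear, not merely parallel: the paper asserts (just before the lemma) that the $F_j$ ``determine a ray'' after noting their normal directions are constant, but parallelism alone does not give collinearity; your observation that each gluing map $S_{Q,E,F}$ sends the shared vertex of $E'$ and $F$ to the shared vertex of $E$ and $F$, and hence preserves the affine line through $F$, is exactly the missing step, and the induction closes the gap cleanly. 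One point to tighten in a final write-up: when you apply Lemma~\ref{lem:P2_mutations} in the shifted chart with $v_3 = 0$, the scalar $\langle w_1, v_1\rangle$ must be evaluated in the original (unshifted) $N$-coordinates where the relation $\sum a_i^2 v_i = 0$ holds --- a naive evaluation of $\langle w_1, v_1 - v_3\rangle = sa_2$ would give the wrong new vertex. Your arithmetic does use the correct value $sa_2 - a_1$, so the result is right, but the exposition silently mixes the two coordinate systems; spelling out that the mutation is performed in $N$ and then translated would make this airtight.
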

\begin{proof}
	We first compute the gradient of the rays $\fe^i$. The edge $E_3$ of $P$ (in normal form) has normal vector $(\ell_1,\ell_2)$. Thus, transforming this normal direction using \eqref{eq:PL_map}, the normal directions to the edges $F^1_1$ and $F^2_1$ of $P^1_1$ and $P^2_1$ respectively are $(\ell_1,\ell_2-s\ell_1)$ and $(\ell_1-s\ell_2,\ell_2)$. Thus the corresponding tangent directions have slopes $\frac{\ell_1}{s\ell_1-\ell_2}$ and $s-\frac{\ell_1}{\ell_2}$. Note that -- since we can freely interchange $a_1$ and $a_2$ -- we only need to consider the second case; that is, we only need to prove that
	\[
	\frac{s-\sqrt{s^2-4}}{2} < s-\frac{\ell_1}{\ell_2} < \frac{s+\sqrt{s^2-4}}{2}.
	\]
	By Lemma~\ref{lem:comparison} these are equivalent to the inequality
	\[
	\sqrt{(3a_3)^2-4} > \abs*{3a_3 - \frac{2a_1}{a_2}}.
	\]
	Squaring both sides and rearranging, we may reduce this inequality to a tautology:
	\begin{align*}
	(3a_3)^2-4 > (3a_3)^2 - 4.3\frac{a_3a_1}{a_2} + 4\frac{a_1^2}{a_2^2} & \Leftrightarrow \\
	\frac{a_1}{a_2}\left(\frac{3a_3a_2-a_1}{a_2}\right) > 1 & \Leftrightarrow \\
	1+\frac{a_3^2}{a_2^2} > 1.
	\end{align*}
\end{proof}

One easy consequence of Lemma~\ref{lem:slope_bound} is that triangles $\fu^1_j$ never overlap triangles $\fu^2_k$ for non-negative integers $j$ and $k$. Phrased differently, we have the following proposition.

\begin{prop}
	The collection of triangles $\Diag(P)$ are the maximal cells of a polyhedral decomposition of a subset of $\R^2$.
\end{prop}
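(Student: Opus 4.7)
The plan is to verify the two defining conditions for a polyhedral decomposition of the union $\bigcup_{\fu \in \Diag(P)} \fu$: each cell is a polyhedron, and any two cells intersect along a common face. The former is immediate from the construction, so the main task is to analyse pairwise intersections of the triangles $\fu \in \Diag(P)$.

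First, adjacent triangles within a single branch $\{\fu^i_j\}_{j \geq 0}$ (with $\fu^i_0 := P$) share a common edge by construction. The gluing map $T^i_{k+1} = T^i_k \circ S_{P^i_k, E^i_k, F^i_k}$ is defined precisely so that the mutated edge of $P^i_{k+1}$ is identified with the edge $E^i_k$ of $\fu^i_k$; hence $\fu^i_k \cap \fu^i_{k+1}$ is exactly this shared edge, and similarly $P \cap \fu^i_1 = E_i$.

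Second, for non-adjacent triangles I would argue via the rays at the common vertex $v$. Lemma~\ref{lem:recursive_rays} identifies the rays $\d^i_j$ with the set of all edges of triangles in $\Diag(P)$ incident to $v$, and the recurrence $v^i_{j+1} = s v^i_j - v^i_{j-1}$ (with $s = 3a_3 \geq 3$ by Lemma~\ref{lem:comparison}) combined with Lemma~\ref{lem:asymptote_slope} shows that within each branch the rays $\d^i_j$ rotate strictly monotonically in angle and converge (without crossing) to $\d^i_\infty$. Hence the angular sectors at $v$ associated to distinct $\fu^i_j$ within a branch are pairwise disjoint, and non-adjacent triangles in the same branch meet only at $v$. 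To separate the two branches, I would apply Lemma~\ref{lem:slope_bound}: the slopes of the constant far edges $\fe^i$ lie strictly between $m^2_\infty$ and $m^1_\infty$, confining the $i=1$ branch to the closed sector at $v$ bounded by $\d^1_0$ and $\d^1_\infty$, and the $i=2$ branch to the opposite sector bounded by $\d^2_0$ and $\d^2_\infty$. These two sectors have disjoint interiors, so triangles from distinct branches intersect at most at $v$.

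The main obstacle is establishing the monotone, non-crossing behaviour of the rays $\d^i_j$. This reduces to the standard analysis of the second-order linear recurrence above: its characteristic polynomial $\lambda^2 - s \lambda + 1 = 0$ has two positive real roots for $s \geq 3$, so the slope $-x_j/x_{j+1}$ converges from one side to the reciprocal of the dominant root, recovering $m^i_\infty$ and giving the required monotonicity. Once this is in hand, the angular-sector disjointness follows directly, and the remaining claims are immediate from the construction of the gluing maps and Lemma~\ref{lem:slope_bound}.
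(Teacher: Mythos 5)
Your proof is correct and follows the paper's approach: the paper offers no proof beyond the one-line remark that Lemma~\ref{lem:slope_bound} keeps the two branches $\{\fu^1_j\}$ and $\{\fu^2_k\}$ from overlapping, and your write-up supplies the intra-branch details (monotone, non-crossing rays $\d^i_j$ from the recurrence; adjacent triangles sharing edges by construction of the gluing maps $T^i_{k}$) that the paper leaves implicit. One small clarification: each $\fu^i_j$ is already contained in the cone at $v$ spanned by $\d^i_j$ and $\d^i_{j+1}$, so inter-branch disjointness really only requires the asymptote slopes of Lemma~\ref{lem:asymptote_slope} to be distinct, with Lemma~\ref{lem:slope_bound} serving chiefly to control the far edges $\fe^i$ (the ingredient needed for boundedness of the support and the gluing arguments in \S\ref{sec:gluing_diagrams}).
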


\section{Gluing diagrams}
\label{sec:gluing_diagrams}

We construct polyhedral decompositions of subsets of $\R^2$ which extend those defined in the previous section. The triangular regions in these decompositions will form chambers of the compatible structure $\sS$ which appears in the statement of Theorem~\ref{thm:superpotential}. Throughout this section we assume that $P$ is a Fano triangle associated to a Markov triple $\ba = (a_1,a_2,a_3)$ such that $a_3 \geq a_1$ and $a_3 \geq a_2$. For each $i \in \{1,2,3\}$ let $v_i$ and $E_i$ denote the vertex and edge of $P$ associated to $a_i$, following the prescription given in \S\ref{sec:diagrams}.

Given such a Fano polygon -- and setting $A_0(P) := \{P\}$ -- we define a collection $A_k(P)$ of subsets of $N_\R$ for each $k \in \Z_{\geq 0}$. For any $\fu \in A_k(P)$ we inductively assume that there is a unique Fano polygon $P_\fu$ and affine transformation $S_\fu$ of the form $S_\fu \colon x \mapsto \lambda x + u$, for some $\lambda \in \R_{>0}$ and $u \in N_\R$, such that $S_\fu(P_\fu) = \fu$. Given the collection $A_k(P)$ we define
\[
A_{k+1}(P) := \{S_{\fu}(\fu') : \fu \in A_k(P), \fu' \in \Diag_1(P_\fu) \setminus \{P_\fu\}\}.
\]
Note that for any $\fu \in A_{k+1}(P)$ there is a unique $P_\fu$ and $S_\fu$ such that $S_\fu(P_\fu) = \fu$.

\begin{definition}
	Assuming that $\ba \neq (1,1,1)$, let $\Region(P) := \coprod_{k \geq 0}{A_k(P)}$ be the union of the sets $A_k(P)$ for $k \in \Z_{\geq 0}$. Assuming instead that $\ba = (1,1,1)$, we let $\Region(P,v)$ denote the union of the sets $A_k(P)$, where $\Diag_1(P_\fu)$ is replaced by $\Diag_1(P_\fu,v)$ in the definition of $A_1(P)$.
\end{definition}

Polygon mutation induces a partial order on $\Region(P)$ (or $\Region(P,v)$) and, if $\ba \neq (1,1,1)$, there is an order preserving bijection between $\Region(P)$ and the graph $\Markov_{\ba}$. If $\ba = (1,1,1)$, there is an order preserving injection $\Region(P,v) \hookrightarrow \Markov_\ba = \Markov$, fixed by taking the two mutations of $\ba$ corresponding to the edges incident to the vertex $v$ of $P$. In particular $\Region(P)$ (resp. $\Region(P,v)$) is a graded meet-semilattice and we let $\fu \wedge \fu'$ denote the infimum of $\fu$ and $\fu'$. We say $\fu'$ is a \emph{successor} of $\fu$ if $\fu' > \fu$ and $d(\fu') = d(\fu) + 1$, where $d$ is the grading function on $\Region(P)$ induced by the grading function $d$ on $\Markov$.

We now check that this gluing construction is compatible with that used in the previous section. That is, we verify that $\Diag(P) \subset \Region(P)$, or -- if $\ba = (1,1,1)$ -- that $\Diag(P,v) \subset \Region(P,v)$ for each $v \in \V{P}$. First note that the polygon $P$ is an element of both sets by the definitions of $\Diag(P)$ and $\Region(P)$ respectively. Moreover, assuming that $\fu^i_k \in \Diag(P)$ is an element of $\Region(P)$ for each $i \in \{1,2\}$, the inclusion $\Diag(P) \subset \Region(P)$ (or $\Diag(P,v) \subset \Region(P,v)$) follows from Lemma~\ref{lem:gluing_chambers}. The triangles appearing in this lemma are illustrated in Figure~\ref{fig:gluing_diagrams}.

\begin{lem}
	\label{lem:gluing_chambers}
	Fixing a value of $i \in \{1,2\}$ and $k \in \Z_{\geq 0}$, let $\fu := \fu^i_k(P) \in \Diag(P)$. For each $j \in \{1,2\}$, let $\bar{\fu}^j_1 := \fu^j_1(P_\fu) \in \Diag(P_\fu)$. We have that
	\[
	S_\fu(\bar{\fu}^{3-i}_1) = \fu^i_{k+1}(P).
	\]
\end{lem}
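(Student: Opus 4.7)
The plan is to unwind the recursive definition of $T^i_{k+1}$ and match the resulting expression for $\fu^i_{k+1}(P)$ against $S_\fu(\bar{\fu}^{3-i}_1)$, keeping careful track of the canonical edge labels on $P_\fu$. From Definition~\ref{dfn:diagram} we have $T^i_{k+1} = T^i_k \circ S_{P^i_k, E^i_k, F^i_k}$, so
\[
\fu^i_{k+1}(P) = T^i_k \bigl( S_{P^i_k, E^i_k, F^i_k}(P^i_{k+1}) \bigr).
\]
Since $T^i_k(P^i_k) = \fu^i_k = \fu$, uniqueness of the pair $(P_\fu, S_\fu)$ forces $P_\fu = P^i_k$ and $S_\fu = T^i_k$; the lemma therefore reduces to the single identity
\[
\bar{\fu}^{3-i}_1 = S_{P^i_k, E^i_k, F^i_k}(P^i_{k+1}),
\]
after which applying $S_\fu$ to both sides finishes the proof.

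The core of the argument is to identify the canonical edge labels $E_1^{(P_\fu)}, E_2^{(P_\fu)}, E_3^{(P_\fu)}$ of the Fano triangle $P_\fu$ with the local labels $E^i_k, F^i_k, G^i_k$. The distinguished vertex $v_3^{(P_\fu)}$ corresponds to the maximal entry of the Markov triple of $P_\fu$ and, away from the degenerate root $(1,1,1)$ (which is handled by the analogous $\Diag(P,v)$ construction), is the vertex just created by the most recent mutation; hence $E_3^{(P^i_k)} = G^i_k$. The remaining binary labelling choice I fix inductively by setting
\[
E_i^{(P^i_k)} = F^i_k, \qquad E_{3-i}^{(P^i_k)} = E^i_k,
\]
for each $i \in \{1,2\}$ and $k \geq 1$. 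This convention is preserved under $P^i_k \mapsto P^i_{k+1}$: the edge $F^i_{k+1} = F^i_k$ keeps its label, the newly created vertex opposite $G^i_{k+1}$ is labelled $v_3^{(P^i_{k+1})}$, and the recursions $E^i_{k+1} = $ image of $G^i_k$ and $G^i_{k+1} = $ image of $E^i_k$ are compatible with the assignment.

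With the labelling in place, the definition of $\bar{\fu}^{3-i}_1 := \fu^{3-i}_1(P_\fu)$ reads
\[
\bar{\fu}^{3-i}_1 = S_{P_\fu,\, E_{3-i}^{(P_\fu)},\, E_i^{(P_\fu)}}\bigl(\text{mutation of } P_\fu \text{ at } E_{3-i}^{(P_\fu)} \text{ fixing } E_i^{(P_\fu)}\bigr),
\]
and substituting $E_{3-i}^{(P^i_k)} = E^i_k$ and $E_i^{(P^i_k)} = F^i_k$ identifies this with $S_{P^i_k, E^i_k, F^i_k}(P^i_{k+1})$, as required.

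I expect the main obstacle to be precisely this labelling bookkeeping: the binary ambiguity in the canonical assignment of $E_1^{(P_\fu)}$ versus $E_2^{(P_\fu)}$ must be resolved in \emph{opposite} ways on the two branches $i = 1$ and $i = 2$, and the swap $i \mapsto 3-i$ appearing in the statement is exactly the manifestation of this opposite resolution. Once the inductive labelling above is fixed, the remaining verification collapses to matching mutation data for a single mutation of the triangle $P_\fu$.
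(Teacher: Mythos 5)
Your overall plan is sound and in fact matches the structure the paper uses (both proofs ultimately reduce to verifying that the single mutation $P_\fu \to P'$ in the definition of $\bar{\fu}^{3-i}_1$ is the same mutation $P^i_k \to P^i_{k+1}$, whereupon the rest is composing scale-and-translate maps). Your version is more explicit about the bookkeeping: the paper asserts that $P'$ is related by a scale-and-translate to $\fu^i_{k+1}(P)$, and then invokes the shared edge with $\fu$ to force equality, implicitly using the same labelling identification you spell out.

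However, your justification of the labelling contains a factual error. You claim that $v_3^{(P_\fu)}$ ``is the vertex just created by the most recent mutation'' and later that ``the newly created vertex opposite $G^i_{k+1}$ is labelled $v_3^{(P^i_{k+1})}$.'' Both statements are false, and the second is internally inconsistent: the newly created vertex lies \emph{on} the new edge $G^i_{k+1}$, not opposite it. In the paper's convention, $v_j$ is the vertex opposite the edge $E_j$ whose lattice length (equivalently, local index) is $a_j$; the edge $G^i_{k+1}$ is the unique edge of $P^i_{k+1}$ whose length equals the new, maximal entry of the Markov triple, so $E_3^{(P^i_{k+1})} = G^i_{k+1}$, and $v_3^{(P^i_{k+1})}$ is the vertex opposite $G^i_{k+1}$, namely the vertex $E^i_k \cap F^i_k$ that survives from $P^i_k$ (not the newly created one). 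Fortunately your downstream argument uses only the edge identity $E_3^{(P^i_k)} = G^i_k$, which is correct, so the proof goes through once the vertex description is repaired; but as written the reasoning for the base of the induction is wrong. You should also note that the reduction via uniqueness of $(P_\fu, S_\fu)$ silently restricts to $k \geq 1$ (since $T^i_0$ is not defined and $\fu^i_0 = P$), which is the case actually needed for the inductive inclusion $\Diag(P) \subset \Region(P)$.
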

\begin{proof}
	Let $P'$ be the Fano polygon obtained by mutating $P_\fu$ in the edge $E_{3-i}$, while fixing edge $E_i$. $P'$ is related -- by a map $x \mapsto \lambda x + u$, for some $\lambda \in \R_{>0}$ and $u \in N_\R$ -- to both $\bar{\fu}^{3-i}_1$ and $\fu^i_{k+1}(P)$. Moreover the triangles $S_\fu(\bar{\fu}^{3-i}_1)$ and $\fu^i_{k+1}$ both share the same edge with $\fu$ and are hence identical.
\end{proof}

\begin{figure}[ht]
	\includegraphics[scale=0.9]{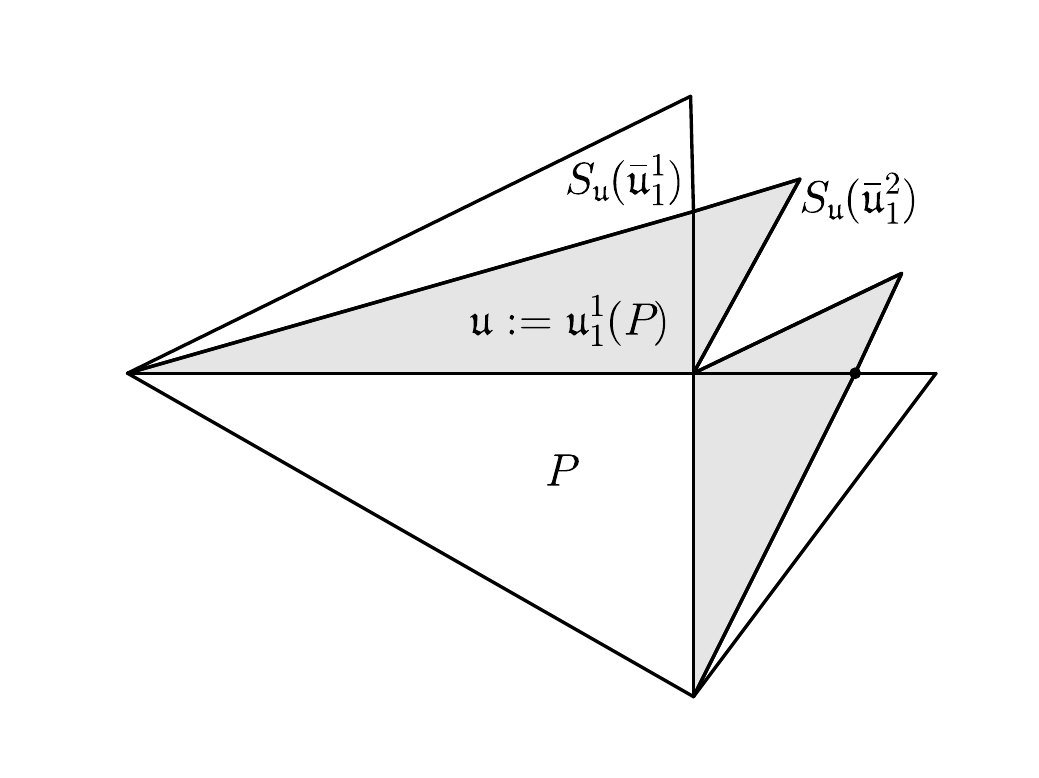}
	\caption{Constructing $\Diag_2(P)$ from $\Diag_1(P)$, and $\Diag_1(P_\fu)$, $\fu \in \Diag_1(P)\setminus \{P\}$.}
	\label{fig:gluing_diagrams}
\end{figure}

We check that $\Region(P)$ defines a polyhedral decomposition of a domain in $\R^2$; that is, that different chambers intersect along faces of each chamber. Given an affine linear map $L \colon \R^2 \to \R^2$ we define $L(\Region(P)) := \{L(\fu) : \fu \in \Region(P)\}$.

We will make use of a version of Lemma~\ref{lem:slope_bound} for $\rho^\star(\Region(P))$, where $\rho$ sends the standard basis in $\Z^2$ to the inner normal vectors of edges $E_1$ and $E_2$ of $P$. Let $\d^i_\infty$ be the rays defined in the previous section, with slopes $m^i_\infty$ for $i \in \{1,2\}$ as shown in Figure~\ref{fig:better_bound}. Fix an edge $E := E_i$ for some $i \in \{1,2\}$ of $P$, and define a sequence $\{\fu_i \in \Region(P) : i \in \Z_{\geq 0}\}$ by setting $\fu_0 = P$, and insisting that $\fu_{k+1}$ is a successor of $\fu_k$ such that the corresponding mutation from $P_{\fu_k}$ fixes the edge $E$. Each $\fu_j$ contains the vertex $v_i$ of $P$, while th remaining vertices of $\fu_i$ converge to a point on $\fr^i$. The triangles $\fu^i_k$ are illustrated in Figure~\ref{fig:better_bound}. We let $r_i$ denote the slope of the ray $\fr^i$.

\begin{figure}
	\includegraphics[scale=1.2]{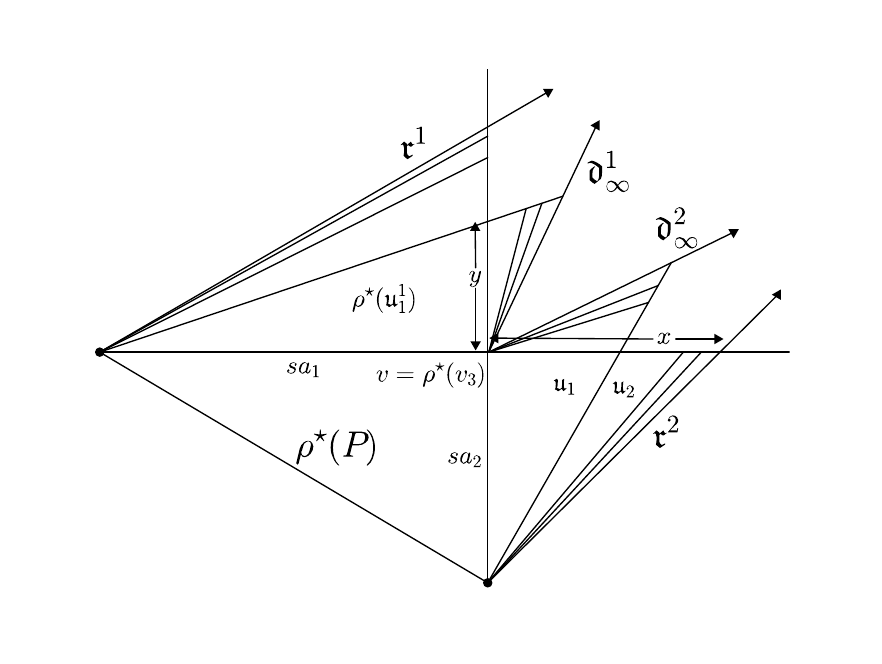}
	\caption{Comparing asymptotes of diagrams}
	\label{fig:better_bound}
\end{figure}

\begin{prop}
	\label{prop:better_bound}
	Given $m^1_\infty$, $m^2_\infty$, $r_1$, and $r_2$ as above, we have that $m^1_\infty > r_i > m^2_\infty$ for $i \in \{1,2\}$.
\end{prop}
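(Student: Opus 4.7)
The plan is to bound $r_i$ by bootstrapping Lemma~\ref{lem:slope_bound} through the nested gluing structure of $\Region(P)$, using induction on depth in $\Markov_\ba$.

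First, I would pin down the geometry of the limit: since each $\fu_k$ contains $v_i$ and the edge of $\fu_k$ opposite $v_i$ contracts geometrically at each mutation, the two non-$v_i$ vertices of $\fu_k$ converge to a common limit point $p \in \R^2$, and $\fr^i$ is the ray from $v_i$ through $p$ with slope $r_i$. Equivalently, $r_i$ is the common limit of the slopes of the edges of $\fu_k$ incident to $v_i$.

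Next, I would run the induction. For the base case, the chamber $\fu_1 \in \Diag_1(P)\setminus\{P\}$ coincides with one of the chambers $\fu^j_1(P) \in \Diag(P)$; applying Lemma~\ref{lem:slope_bound} to $P$ together with the normal-form computation of \S\ref{sec:diagrams} pins down the slope $m_j$ of the relevant $\fe^j$-ray strictly between $m^2_\infty$ and $m^1_\infty$. For the inductive step, the tail $\{S_{\fu_1}^{-1}(\fu_k) : k \geq 1\}$ is a sequence of the same form inside $\Region(P_{\fu_1})$, where $P_{\fu_1}$ lies one step deeper in $\Markov_\ba$. Applying the inductive hypothesis to $P_{\fu_1}$ and transporting the resulting slope bound back via $S_{\fu_1}$, combined with the local bound from Lemma~\ref{lem:slope_bound} applied at each intermediate Fano triangle $P_{\fu_k}$, confines $p$ inside a nested decreasing sequence of sub-chambers whose edges at $v_i$ carry slopes uniformly strictly between $m^2_\infty$ and $m^1_\infty$.

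Finally, the limit $r_i = \lim_{k \to \infty}(\text{slope of edge of }\fu_k\text{ at }v_i)$ inherits the same strict bound, giving $m^2_\infty < r_i < m^1_\infty$. The main obstacle is the algebraic bookkeeping: the strict inequality must survive the compositions of the affine maps $S_{\fu_k}$ through the tower, since the local bounds at each $P_{\fu_k}$ are measured with respect to asymptotes that themselves shift under $S_{\fu_k}$. As in the closing calculation of the proof of Lemma~\ref{lem:slope_bound}, I expect this to reduce, using $s = 3a_3$ from Lemma~\ref{lem:comparison} and the Markov relation $a_1^2 + a_2^2 + a_3^2 = 3a_1 a_2 a_3$, to a tautological inequality that holds uniformly at every level of the induction.
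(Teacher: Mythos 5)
Your plan and the paper's proof diverge in an important way, and as written your approach has a genuine gap.

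The paper's proof is a direct, quantitative computation rather than an induction. Working in normal form at $v = \rho^\star(v_3)$, it writes $r_2 = sa_2/x$ explicitly, where $x$ is the horizontal distance from $v$ to the intersection of $\fr^2$ with the coordinate axis, and then bounds $x$ by a convergent geometric series: each successive mutation scales the relevant segment by a factor $\lambda = a_1/(3a_3a_2 - a_1) \leq 1/2$, so $x \leq \sum_{j\geq 1} sa_1/2^j = sa_1$. Combined with $y < sa_2$ (again from $\lambda \leq 1/2$) and the bound $y/(sa_1) > m^2_\infty$ from Lemma~\ref{lem:slope_bound}, this gives $r_2 = sa_2/x \geq y/(sa_1) > m^2_\infty$. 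The upper bound is even more elementary: $r_2 \leq m_1 < m^1_\infty$ directly, because the edges of the $\fu_k$ incident to $v_i$ all sit on one side of $\fe^1$.

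Your induction on depth in $\Markov_\ba$ has two problems. First, it does not terminate: to bound $r_i(P)$ you invoke the statement for $P_{\fu_1}$, which is one step \emph{deeper} in $\Markov_\ba$; there is no base case that anchors the recursion, since $\Markov_\ba$ has no maximal elements. Second, even granting per-level strictness, you assert the bound is ``uniformly'' strict in the limit but give no mechanism to produce a uniform gap $\delta > 0$; a sequence of strict inequalities can degenerate to equality at the limit point. You flag the shifting asymptotes under $S_{\fu_k}$ as ``algebraic bookkeeping'' to be resolved later, but that bookkeeping is precisely where the argument would have to produce the uniform contraction. The missing ingredient is exactly the quantitative fact $\lambda \leq 1/2$, which the paper uses not to propagate an inequality down the tower, but to sum a geometric series once and for all and pin down the limit point's location strictly inside the cone bounded by $\d^1_\infty$ and $\d^2_\infty$.
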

\begin{proof}
	Note that -- since $a_1$ and $a_2$ are interchangeable -- we only need to check this inequality for a single value of $i$, in particular we verify this inequality in the case $i=2$. The slope $r_2$ is a limit of slopes of edges of triangles $\fu_i$, indicated in Figure~\ref{fig:better_bound}. We have that $r_2 \leq m_1$ by construction, and $m_1 \leq m^1_\infty$ by Lemma~\ref{lem:slope_bound}. To show $r_2 \geq m^2_\infty$, let $x$ be length of the horizontal segment between $v := \rho^\star(v_3)$ and $\d^2_\infty$ shown in Figure~\ref{fig:better_bound}. Let $y$ be the length of the vertical edge of $\rho^\star(\fu^1_1(P))$. By Lemma~\ref{lem:slope_bound} we have that  $y/sa_1 > m^2_\infty$; hence it is sufficient to show that $r_2 = sa_2/x \geq y/sa_1$. To do this we show that $x \leq sa_1$ and $y < sa_2$.

	Both these inequalities follow from the fact that, given a Markov triple $(a_1,a_2,a_3)$ such that $a_3 \geq a_2$ and $a_3 \geq a_1$, we have $3a_3a_2-a_1 \geq 2a_1$ and $3a_3a_1-a_2 \geq 2a_2$. Considering the diagram $\Diag_1(P)$, and the affine map $A \colon v \mapsto \lambda v + u$ which takes the triangle corresponding to $(a_2,a_3,3a_3a_2-a_1)$ to $\fu^1_1(P)$, $A$ must take the edge with lattice length $(3a_3a_2-a_1)$ to one with lattice length $a_1$; hence, 
	\[
	\lambda = \frac{a_1}{3a_3a_2-a_1} \leq \frac{1}{2},
	\]
	and thus $y = \lambda sa_2 < sa_2$. Repeatedly applying the fact that $\lambda \leq \frac{1}{2}$ for any Fano triangle $P$ associated to a Markov triple, we have that $x \leq \sum_{j \geq 1}\frac{sa_1}{2^j} = sa_1$.	
\end{proof}

Note that, given a lower bound $K$ for the \emph{smallest} value in $\ba$, we can strengthen the bound $\lambda \leq 1/2$. Indeed, 
\[
\lambda = \frac{a_1}{3a_3a_2-a_1} \leq \frac{a_1}{3Ka_1-a_1} = \frac{1}{3K-1}.
\]

\begin{remark}
	\label{rem:small_seg}	
	The fact that $x \leq sa_1$ in the proof of Proposition~\ref{prop:better_bound} implies that the segment obtained by applying ${\rho^\star}^{-1}$ to the horizontal segment between $v$ and $\fr^2$ (extending $\rho^\star(E_1)$) has Euclidean length at most the length of the edge $E_1$.
\end{remark}

Let $R_P$ denote the triangle in $\R^2$ bounded by (initial segments of) the rays $(\rho^\star)^{-1}(\fr^1)$ and $(\rho^\star)^{-1}(\fr^2)$, and the edge $E_3$ of $P$. We also set $R_\fu := S_\fu(R_{P_\fu})$ for any $\fu \in \Region(P)$ or $\Region(P,v)$. If context removes ambiguity we write $R_\fu$ instead of $\rho^\star(R_\fu)$, where $\fu \in \Region(P)$ or $\Region(P,v)$. For example, considering the chamber $\fu = \rho^\star(\fu^1_1)$ indicated in Figure~\ref{fig:better_bound}, the triangular region $R_\fu$ is bounded by edge $\rho^\star(E_1)$ of $\rho^\star(P)$ and the rays $\fr^1$ and $\d^1_\infty$.

\begin{lem}
	\label{lem:containment}
	The region $R_\fu$ is bounded for any $\fu \in \Region(P)$. Moreover if $\fu' \geq \fu$ we have that $R_{\fu'} \subseteq R_\fu$.
\end{lem}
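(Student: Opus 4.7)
The statement has two parts---boundedness and the nesting $R_{\fu'} \subseteq R_\fu$---which I will address in turn, with the nesting treated by induction along successors in $\Region(P)$.

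For boundedness, $R_{P_\fu}$ is by construction a triangle bounded by the edge $E_3$ of $P_\fu$ together with initial segments of the rays $(\rho^\star)^{-1}(\fr^1)$ and $(\rho^\star)^{-1}(\fr^2)$ issuing from the endpoints of $E_3$. For these three curves to bound a genuine bounded triangle, the two rays must meet; this follows from Proposition~\ref{prop:better_bound}, which forces both slopes $r_1, r_2$ to be finite and to lie strictly between $m^2_\infty$ and $m^1_\infty$, so the rays are non-parallel and emanate from distinct basepoints, hence meet in a single point. Consequently $R_{P_\fu}$ is bounded, and $R_\fu = S_\fu(R_{P_\fu})$ is bounded as the image of a bounded set under the affine map $S_\fu$.

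For the containment, since $\Region(P)$ is graded by $d$ and every pair $\fu \leq \fu'$ is joined by a chain of successors, transitivity of inclusion reduces the claim to the case where $\fu'$ is a successor of $\fu$. Applying the invertible affine map $S_\fu^{-1}$ further reduces to the base case $\fu = P$ and $\fu' \in \{\fu^1_1(P), \fu^2_1(P)\}$, and the $a_1 \leftrightarrow a_2$ symmetry leaves only the claim $R_{\fu^1_1(P)} \subseteq R_P$. The chamber $\fu^1_1(P)$ is glued to $P$ along the edge $E_1$, and $R_{\fu^1_1(P)}$ is the triangle bounded by $T^1_1$ applied to the $E_3$-edge of $P^1_1$ together with initial segments of the $\fr^i$-rays for $P^1_1$ transported via $T^1_1$. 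I will verify the inclusion by comparing these three bounding curves against those of $R_P$: Remark~\ref{rem:small_seg} applied to $P$ guarantees that the apex of $R_P$ lies far enough from $v_3$ to contain $\fu^1_1(P)$, while Proposition~\ref{prop:better_bound} applied to the Fano triangle $P^1_1$, combined with the mutation formula \eqref{eq:PL_map} for inner normals, confines the transported $\fr^i$-rays of $P^1_1$ to the angular cone cut out by $\fr^1$ and $\fr^2$ of $P$.

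The main obstacle will be the coordinate bookkeeping needed to translate slope bounds between two distinct normal forms, since Proposition~\ref{prop:better_bound} is formulated intrinsically for each Fano triangle. Tracking how the $\fr^i$-rays and edges transform under $T^1_1 = S_{P,E_1,E_2}$, via the piecewise linear rule \eqref{eq:PL_map} together with the scaling data of Lemma~\ref{lem:standard_scale}, is essentially calculational; I do not expect new conceptual input to be required beyond the slope analyses already developed in Section~\ref{sec:diagrams}.
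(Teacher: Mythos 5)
Your reduction of the containment to the successor case, then to the base case $\fu = P$ and $\fu' = \fu^1_1(P)$ via $S_\fu^{-1}$, is the same strategy the paper uses, and your appeal to Proposition~\ref{prop:better_bound} there is in the right spirit (the paper compresses the comparison to the single inequality $m^1_\infty > r_2$).

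The boundedness argument, however, has a genuine gap. You argue that $\fr^1$ and $\fr^2$ meet because Proposition~\ref{prop:better_bound} forces $m^2_\infty < r_1, r_2 < m^1_\infty$, ``so the rays are non-parallel and emanate from distinct basepoints, hence meet in a single point.'' This inference is not valid: two non-parallel half-lines emanating from distinct points need not intersect -- the half-lines must additionally be oriented so as to point toward one another, which (with basepoints at the endpoints of $E_3$) requires a specific ordering of $r_1$ and $r_2$ that the slope bounds alone do not furnish. Proposition~\ref{prop:better_bound} also does not rule out $r_1 = r_2$. The paper's argument avoids this by a different route: for $\ba \neq (1,1,1)$ it replaces $P$ with the polygon associated to the triple obtained by mutating $\ba$ at the maximal entry $a_3$ (the predecessor of $\ba$ in $\Markov$), and then invokes Proposition~\ref{prop:better_bound} for that replacement polygon to conclude that $\fr^2$ and $\d^2_\infty$ intersect -- which is the geometric fact that actually pins down the apex of $R_P$. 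The case $\ba = (1,1,1)$, which has no predecessor and so cannot be treated this way, is handled by a separate direct calculation ($r_2 > 1 > r_1$). Your proof neither reproduces this device nor supplies an alternative argument for why the half-lines actually meet, and it does not address the $(1,1,1)$ case.
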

\begin{proof}
	Putting $P$ in standard form, the first part follows from the observation that $\fr^1$ and $\fr^2$ intersect. If the Markov triple $\ba = (a_1,a_2,a_3)$ associated to $P$ is not equal to $(1,1,1)$ this is follows immediately from Proposition~\ref{prop:better_bound} (which shows that $\fr^2$ and $\d^2_\infty$ intersect) after replacing $P$ by the polygon associated to the triple obtained by mutating $\ba = (a_1,a_2,a_3)$ at the maximal value $a_3$. The case $\ba = (1,1,1)$ follows from direct calculation, or, arguing similarly to the proof of Proposition~\ref{prop:better_bound}, $sa_1 = sa_2 = 3$, and $x < 3$. Since in this case $sa_2/x = r_2 = 1/r_1$, we have that $r_2 > 1 > r_1$.
	
	Given a triangle $\fu' \geq \fu$ the fact that $R_{\fu'} \subseteq R_\fu$ follows inductively from Proposition~\ref{prop:better_bound}. Indeed, comparing the regions $R_\fu$ associated to $\fu = \rho^\star(P)$ and $\fu' = \rho^\star(\fu^1_1)$ shown in Figure~\ref{fig:better_bound} we see that, as $m^1_\infty > r_2$, we have that $R_{\fu'} \subseteq R_\fu$.
\end{proof}

\begin{prop}
	\label{prop:no_overlap}
	The set $\Region(P)$ is the set of two dimensional cells of a polyhedral decomposition of a subset of $\R^2$. Moreover for each vertex $v$ of this decomposition, $v$ is either a vertex $v_i$ for $i \in \{1,2\}$ of $P$, or there is a unique $\fu$ such that every triangle in $\Region(P)$ that contains $v$ is an element of $S_\fu(\Diag(P)) := \{S_\fu(\fu') : \fu' \in \Diag(P_\fu)\}$.
\end{prop}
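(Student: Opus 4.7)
My plan is to prove both assertions by induction on the level structure of $\Region(P)$, together with the containment result of Lemma~\ref{lem:containment}. The first assertion -- that $\Region(P)$ is a polyhedral decomposition of a subset of $\R^2$ -- I would establish by induction on $n \in \Z_{\geq 0}$ applied to $\bigcup_{k=0}^n A_k(P)$; the base case $n=0$ is immediate. For the inductive step, any new chamber $\fu \in A_{n+1}(P)\setminus A_n(P)$ is uniquely of the form $S_{\fu'}(\fu'')$ for some $\fu' \in A_n(P)$ and $\fu'' \in \Diag_1(P_{\fu'})\setminus\{P_{\fu'}\}$, and by construction shares an edge with $\fu'$, namely the image of the fixed edge of the mutation $P_{\fu'} \to \fu''$. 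The key check is that $\fu$ does not improperly intersect any other chamber $\fu_0 \in \bigcup_{k=0}^n A_k(P)$ with $\fu_0\neq\fu'$. For this, I would consider the meet $\fu_\wedge := \fu' \wedge \fu_0$ in the partial order on $\Region(P)$: the chambers $\fu'$ and $\fu_0$ must descend into distinct subtrees via two distinct children $\fu_1, \fu_2$ of $\fu_\wedge$. Iterating Lemma~\ref{lem:containment} places $\fu$ inside $R_{\fu_1}$ and $\fu_0$ inside $R_{\fu_2}$, and one concludes by showing that the interiors of $R_{\fu_1}$ and $R_{\fu_2}$ are disjoint.

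For the second assertion, let $v$ be a vertex of the decomposition with $v \notin \{v_1,v_2\}$ of $P$. I would identify the unique chamber $\fu \in \Region(P)$ such that $v = S_\fu(v_3)$ is the image of the accumulation vertex of $\Diag(P_\fu)$. Existence follows by tracing back in the tree $\Region(P)$ to the chamber at which $v$ first appears as an accumulation vertex of an attached diagram. For uniqueness, if $v$ were also of the form $S_{\fu_0}(v_3)$ for some $\fu_0 \neq \fu$, then by the first part the subtrees rooted at $\fu$ and $\fu_0$ occupy disjoint regions, forcing $v$ onto their shared boundary; a direct analysis of this boundary shows that only $v_1$ and $v_2$ of $P$ can serve as common vertices across distinct subtrees, contradicting the assumption. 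Finally, the chambers of $\Region(P)$ containing $v$ are precisely those in $S_\fu(\Diag(P_\fu))$, since any further attached subdiagrams below chambers of $\Diag(P_\fu)$ are glued along edges not incident to $v = S_\fu(v_3)$, as can be read off from the gluing recipe of Section~\ref{sec:diagrams}.

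The main obstacle is the geometric disjointness of the sibling regions $R_{\fu_1}$ and $R_{\fu_2}$ needed in the inductive step of the first part. Lemma~\ref{lem:containment} only gives $R_{\fu_1},R_{\fu_2} \subseteq R_{\fu_\wedge}$; the additional content is that these regions lie on opposite sides of $\fu_\wedge$, meeting only along the boundary of $\fu_\wedge$. I expect this to follow from the observations that $\fu_1$ and $\fu_2$ are glued to $\fu_\wedge$ along distinct edges, together with the slope bounds of Proposition~\ref{prop:better_bound} and Remark~\ref{rem:small_seg}, which guarantee that the rays $\fr^1, \fr^2$ bounding each $R_{\fu_i}$ do not cross back into the sibling's region.
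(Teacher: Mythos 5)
Your overall strategy -- induction on the level filtration of $\Region(P)$, using Lemma~\ref{lem:containment} together with the slope bounds of Proposition~\ref{prop:better_bound} -- is the same as the paper's, and the key geometric input you single out (disjointness of sibling regions $R_{\fu_1}$, $R_{\fu_2}$) is indeed what the paper relies on: it records this as $R_{\fu^1_1(P)}\cap R_{\fu^2_1(P)} = \{v_3\}$, a direct consequence of the asymptotic slope bounds from \S\ref{sec:diagrams}--\ref{sec:gluing_diagrams}, so your ``main obstacle'' is already handled. However, there is a genuine gap in your inductive step for the first assertion. You reduce the intersection check for a new chamber $\fu$ (a child of $\fu'$) against an old chamber $\fu_0$ to the case where $\fu'$ and $\fu_0$ descend into \emph{distinct} subtrees below $\fu_\wedge := \fu'\wedge\fu_0$. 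But when $\fu_0$ is an ancestor of $\fu'$ -- i.e.\ $\fu_0 = \fu_\wedge$, which is a perfectly possible configuration since $\fu_0$ lives at a level $\leq n$ -- there are not two distinct children to separate, and your argument does not apply. This comparable case is exactly where the intersection patterns change (parent and child share an edge, grandparent and grandchild share a vertex, more distant ancestors share a vertex or nothing), and the paper handles it with the containment chain $R_{\fu'}\cap\fu = \fu'\cap\fu$ (shared edge), $R_{\fu''}\cap\fu = \fu''\cap\fu$ (shared vertex), and $\bar{\fu}\cap\fu \subseteq R_{\fu''}\cap\fu$ for all $\bar{\fu}\geq\fu''$. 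You would need to supply an analogous argument.

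On the second assertion, your identification of the unique $\fu$ via the accumulation vertex is the right idea and matches the paper (which takes $k$ minimal with $v\in\fu$ for some $\fu\in A_k$ and then observes $v$ corresponds to the maximal Markov entry for a unique such $\fu$). But the auxiliary claim you invoke for uniqueness -- that ``only $v_1$ and $v_2$ of $P$ can serve as common vertices across distinct subtrees'' -- is false: every accumulation vertex $v_\fu$ (including $v_3$ of $P$) is shared across the two sibling subtrees rooted at $\fu$'s children. The correct argument is that any vertex shared between incomparable $\fu$ and $\fu_0$ must be $v_{\fu\wedge\fu_0}$ (or $v_3$ of $P$ when $\fu\wedge\fu_0=P$), which lies in a chamber strictly less than $\fu$, contradicting the defining property of $v_\fu$ as the vertex not contained in any smaller chamber. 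Your final step describing which chambers contain $v$ (those in $S_\fu(\Diag(P_\fu))$ and no others) is stated rather informally; the paper gives a complete case analysis, in particular showing $v\notin R_{\fu_2}$ for the unique successor $\fu_2\notin\Diag(P_\fu)$ of each $\fu_1\in\Diag(P_\fu)$, and using $v\in\Int(R_\fu)$ but $\fu'\cap\fu\subset\partial R_\fu$ for incomparable $\fu'$.
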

\begin{proof}
	We first show that elements of $\Region(P)$ intersect along faces. By Lemma~\ref{lem:containment} we have that $R_{\fu^i_1(P)} \subseteq R_P$ for each $i \in \{1,2\}$, and hence $\fu^i_1(P) \subset R_P$. Thus, by induction, $\fu \subset R_P$ for all $\fu \in \Region(P)$. Replacing $P$ with $P_\fu$ we have that $\fu' \subset R_\fu$ for any $\fu' \geq \fu$.
	
	Fix elements $\fu$ and $\fu'$ of $\Region(P)$ such that $\fu'$ is a successor of $\fu$ in the partial order on $\Region(P)$. We have that $R_{\fu'} \cap \fu = \fu' \cap \fu$ is a shared edge. Moreover, if $\fu'' \in \Region(P)$ is a successor of $\fu'$ we have that $R_{\fu''} \cap \fu = \fu'' \cap \fu$ is a single shared vertex of $\fu$. If $\bar{\fu} \geq \fu''$, then $\bar{\fu} \cap \fu \subseteq R_{\fu''} \cap \fu$ is either a shared vertex, or empty. In other words, pairs of comparable triangles $\fu,\fu' \in \Region(P)$ intersect along faces.

	Considering the chamber $\fu^i_1(P)$ for each $i \in \{1,2\}$, the intersection $R_{\fu^1_1(P)} \cap R_{\fu^2_1(P)}$ is the vertex $v_3$ of $P$. Suppose now that $\fu \in \Region(P)$ and $\fu' \in \Region(P)$ are incomparable. Replacing $P$ with the polygon $P_{\fu \wedge \fu'}$ we have that $\fu \cap \fu' \subset R_{\fu^1_1(P)} \cap R_{\fu^2_1(P)} \subseteq \{v_3\}$. However, as $\fu$ and $\fu'$ are each contained in $R_{\fu^i_1(P)}$ for distinct values of $i \in \{1,2\}$, $v_3$ must be a vertex of $\fu$ and $\fu'$.
	
	Finally we describe the vertices of triangles appearing in $\Region(P)$. Fix a vertex $v \in \R^2$ of a triangle in $\Region(P)$, and let $k \in \Z_{\geq 0}$ be minimal such that $v \in \fu$ for some $\fu \in A_k \subset \Region(P)$. Assume $v \notin \{v_1,v_2\} \subset \V{P}$, and note that in this case there is a unique $\fu \in A_k$ such that $v$ corresponds to the largest value of the Markov triple associated to $P_\fu$. We now classify the chambers $\fu'$ in $\Region(P)$ which contain $v$. We consider a number of cases: First, if $\fu' < \fu$, $v \notin \fu'$ by hypothesis, and the fact that triangles in $\Region(P)$ intersect in faces. If $\fu' \in S_\fu(\Diag(P_\fu))$, then $v \in \fu'$ by the definition of $\Diag(P_\fu)$, see Figure~\ref{fig:adding_rays}. If $\fu' > \fu$ and $\fu' \notin  S_\fu(\Diag(P_\fu))$, then $v \notin \fu'$. Indeed, replace $P$ with $P_\fu$ and observe that any $\fu_1 \in \Diag(P)$ has a unique successor $\fu_2 \notin  \Diag(P)$. Note that $v \notin R_{\fu_2}$, and hence $v$ is not contained in any triangle $\fu_3 \geq \fu_2$. Finally, if $\fu$ and $\fu'$ are incomparable, $v \notin \fu'$ as $v \in \Int(R_\fu)$, but $\fu' \cap \fu \subset \partial R_\fu$.
\end{proof}

For each $\fu \in \Region(P)$ (or $\Region(P,v)$), let $v_\fu$ be the unique vertex of $\fu$ not contained in any $\fu' < \fu$. We define the \emph{support} of $\Region(P)$ to be the set 
\[
\{x \in \R^2 : \text{$x \in \fu$ for some $\fu \in \Region(P)$}\}.
\]
\begin{lem}
	\label{lem:edges_in_region}
	The union of edges of triangles in $\Region(P) \setminus \{P\}$ is the intersection of a collection of rays in $\R^2$ with the support of $\Region(P)$.
\end{lem}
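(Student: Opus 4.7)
The plan is to exhibit the collection of rays explicitly, indexed by the vertices of the polyhedral decomposition supplied by Proposition~\ref{prop:no_overlap}. For each apex $v_\fu$ with $\fu \in \Region(P)$ I take the rays $S_\fu(\d^i_j)$ from Definition~\ref{dfn:rays_from_edges} emanating from $v_\fu$; at the vertices $v_1, v_2$ of $P$ and their analogues inside each sub-diagram I take the ``fixed-edge'' rays $\fe^i$ and their transports $S_\fu(\fe^i)$. This will constitute the collection of rays appearing in the statement.

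To verify that every edge of every triangle in $\Region(P) \setminus \{P\}$ is contained in one of these rays I would argue inductively along the depth filtration $\Region(P) = \bigcup_k A_k(P)$. For the root $\fu = P$, Lemma~\ref{lem:recursive_rays} identifies the edges of the triangles $\fu^i_j(P)$ incident to $v_3$ with segments of the rays $\d^i_j$, and the paragraph preceding Lemma~\ref{lem:comparison} shows that the remaining ``fixed'' edges $T^i_j(F^i_j)$ line up to form $\fe^i$. Transporting this picture along $S_\fu$ gives the analogous conclusion inside each sub-diagram $S_\fu(\Diag(P_\fu))$, and since Proposition~\ref{prop:no_overlap} asserts that every triangle in $\Region(P) \setminus \{P\}$ belongs to exactly one such sub-diagram, every edge is covered.

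For the reverse inclusion, that the intersection of each ray with the support consists only of edges, I would combine Lemma~\ref{lem:containment} with Proposition~\ref{prop:better_bound}. Lemma~\ref{lem:containment} confines every descendant of $\fu$ to the bounded region $R_\fu$, so each ray $S_\fu(\d^i_j)$ or $S_\fu(\fe^i)$ meets the support only inside $R_\fu$ once it leaves the triangle $\fu$ itself. Within $R_\fu$, Proposition~\ref{prop:better_bound} forces the asymptotic directions of every deeper sub-diagram to lie strictly between the bounding rays $\fr^1, \fr^2$, which ensures that once a ray reaches a deeper apex $v_{\fu'}$ with $\fu' > \fu$ it aligns with one of the rays $S_{\fu'}(\d^{i'}_{j'})$ of the new local diagram rather than piercing the interior of a successor triangle. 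Iterating this observation, the portion of each ray lying in the support is swept out exactly by a union of edges.

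The principal obstacle I expect is propagating this alignment uniformly through all depths of the tree: at each apex $v_{\fu'}$ crossed by an incoming ray one must check that the ray realises one of the finitely many edge directions of $S_{\fu'}(\Diag(P_{\fu'}))$ emanating from $v_{\fu'}$. Proposition~\ref{prop:better_bound} together with the contraction estimate $\lambda \leq 1/2$ of Remark~\ref{rem:small_seg} should make this a finite check at each step, but the bookkeeping across infinitely many nested sub-diagrams is the delicate part of the argument.
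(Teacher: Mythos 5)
Your plan differs from the paper's argument in a way worth flagging. The paper does not construct the collection of rays in advance and then check two inclusions; instead it builds the rays by piecing edges together, using a purely combinatorial observation: every edge $E$ of $\fu \in \Region(P)\setminus\{P\}$ either contains $v_\fu$ or is shared with the predecessor $\fu'<\fu$ (in which case it contains $v_{\fu'}$), and in either case the segment is prolonged through that vertex by the unique edge of the appropriate successor triangle $S_{\fu}(\fu^i_1(P_\fu))$ (or $S_{\fu'}(\fu^i_1(P_{\fu'}))$). Combined with Proposition~\ref{prop:no_overlap}, this gives both inclusions at once without ever invoking the asymptotic estimates.

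The gap in your proposal is in the reverse inclusion. You write that Lemma~\ref{lem:containment} and Proposition~\ref{prop:better_bound} ``ensure that once a ray reaches a deeper apex $v_{\fu'}$ it aligns with one of the rays $S_{\fu'}(\d^{i'}_{j'})$,'' but those results only give containments and inequalities on asymptotic slopes; they cannot produce the \emph{exact} collinearity you need. What actually forces the alignment is an affine fact about the maps $S_\fu$ and $T^i_j$: the dilation $T^i_1 = S_{P,E_i,E_{3-i}}$ carries one endpoint of the fixed edge $E_{3-i}$ to the other, so it preserves the line through $E_{3-i}$, and therefore $T^i_1(E^i_1)$ extends $E_{3-i}$ collinearly past $v_3$ into $\fu^i_1$. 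This is precisely the content of the paper's sentence about edges of $\fu$ incident to $v_\fu$ being extended by edges of $S_\fu(\fu^i_1(P_\fu))$, and it is an identity, not an estimate. Your forward inclusion is sound, and once the exact extension property is established the reverse inclusion follows from Proposition~\ref{prop:no_overlap} (a ray built from consecutive edges cannot meet the interior of any triangle), but as written the justification offered for the alignment step does not prove it.

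A secondary caution: the rays $\d^i_j$ of Definition~\ref{dfn:rays_from_edges} live in $T_v\R^2$, so to make your explicit list of rays in $\R^2$ precise you should base each $S_\fu(\d^i_j)$ at $v_\fu$; and you should note that the resulting collection is heavily redundant, since the continuation of $S_\fu(\d^i_j)$ past a joint coincides with an $\fe$-type ray of the sub-diagram there. Neither issue is fatal, but both add bookkeeping that the paper's edge-piecing argument avoids.
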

\begin{proof}
	By induction we see that every edge $E$ of a triangle $\fu \in \Region(P)\setminus \{P\}$ is either an edge of a triangle $\fu' < \fu$ or contains $v_\fu$. The edges of $\fu$ incident to $v_\fu$ are extended by edges of the two triangles $S_\fu(\fu^i_1(P_\fu))$ for $i \in \{1,2\}$. If $E$ is contained in $\fu'$ it contains $v_{\fu'}$ (otherwise $E$ is an edge of three regions: $\fu$,  $\fu'$, and some $\fu'' < \fu'$), and this edge is extended by the triangle $S_{\fu'}(\fu^i_1(P_{\fu'}))$ not equal to $\fu$.
\end{proof}

\section{Background On the Gross--Siebert Algorithm}
\label{sec:GS_background}

In this section we briefly recall the main definitions, results, and notation used in the Gross--Siebert algorithm; we refer to~\cite{GS1} for full details. The Gross--Siebert algorithm takes as input a `discrete' part and a `continuous' part. The discrete part consists of a tuple $(B,\sP,s,\varphi)$, where:
\begin{enumerate}
	\item $B$ is an integral affine manifold with singularities,
	\item $\sP$ is a polyhedral decomposition of $B$,
	\item $s$ is open gluing data, defined in~\cite{GS1} and,
	\item $\varphi$ is a polarisation, a multi-valued piecewise linear function on $B$.
\end{enumerate}

\begin{definition} \label{def:affine_manifold}
	An integral affine manifold with singularities is a topological manifold $B$, an open dense submanifold $B_0 \subset B$ such that $\Delta := B \setminus B_0$ has codimension at least two, and an atlas on $B_0$ such that all transition functions lie in the group of $\Z$-affine functions $\Z^2 \rtimes \GL(2,\Z)$.
\end{definition}

Observe that in $B_0$ linear objects (lines, polyhedra) are well defined, and the notion of polyhedral decomposition is well-defined. The continuous part consists of a collection of \emph{slab functions} $f_\tau$. Given a codimension one cell $\tau$ of $\sP$; a slab function is a section of a line bundle on the toric variety defined by the normal fan of $\tau$. This line bundle is determined by the monodromy around the singular locus in the affine structure on $B$, we refer to \cite{GS1} for further details.

The Gross--Siebert algorithm, in the form described in \cite[Chapter~$6$]{TropGeom}, iteratively constructs a collection of rays $\sS$, called a \emph{structure}, together with a notion of \emph{order} for each ray. The set of rays with order at most $k$ form finite sets $\sS[k]$. For each $k \in \Z_{\geq 0}$ one defines the set $\Chambers(\sS,k)$ to be maximal cells of an auxiliary polyhedral decomposition of $\breve{B}$. The conditions this decomposition needs to satisfy are given in \cite[Definition~$6.24$]{TropGeom}; roughly, rays in $\sS[k]$ are unions of edges of the decomposition. Starting from an initial structure, the Gross--Siebert algorithm constructs a \emph{compatible} structure, which can be used to build a toric degeneration. Indeed, given a compatible structure Gross--Siebert define a functor $F_k$ from a category -- called $\underline{\text{Glue}}(\sS,k)$ -- defined from $\sP_k$ to the category of rings. Objects of $\underline{\text{Glue}}(\sS,k)$ are certain triples $(\omega,\tau,\fu)$ where $\omega$ and $\tau$ are strata of $\sP$ and $\fu \in \Chambers(\sS,k)$. The ring $F_k(\omega,\tau,\fu)$ is denoted $R^k_{\omega,\tau,\fu}$, and we refer to see \cite{GS1} or \cite[p.$277$]{TropGeom} for a complete definition. In general, the ring $R^k_{\omega,\tau,\fu}$ is a localisation of a quotient of the polynomial ring $\field[P_{\varphi,\omega}]$. The semigroup $P_{\varphi,\omega}$ is contained in an extension of the lattice $\Lambda$ of integral tangent vectors at a point in $\omega$ by $\Z$. We refer to elements $m$ of $P_{\varphi,\omega}$ as \emph{exponents}, and let $\bar{m}$ denote their projection to $\Lambda$. It is proved in~\cite{GS1,TropGeom} that, taking the inverse limit over this system of rings, a compatible structure defines a flat formal deformation of the reducible union of toric varieties whose moment polytopes are the maximal cells of $\sP$.

The main tool used in~\cite{GS1} to construct the walls of $\sS[k]$ is the notion of \emph{scattering diagram}. The only examples of scattering diagrams we shall use are the most basic studied and are studied in detail in~\cite{GPS,GHKK}, to which we refer for details.

\begin{figure}
	\includegraphics[scale=1.3]{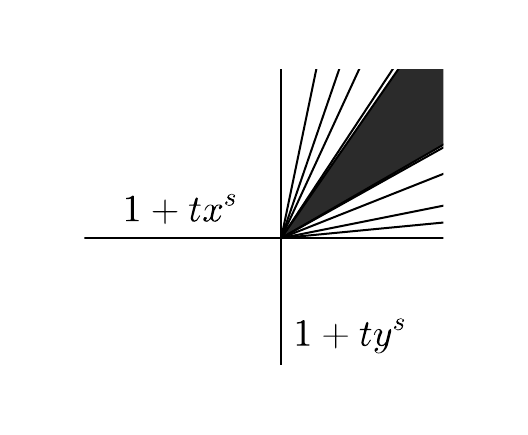}
	\caption{Rays of a scattering diagram $\fD_k(s)$.}
	\label{fig:scattering_1}
\end{figure}

Given $s \in \Z_{>0}$ we define the (initial) scattering diagram,
	\[
	\fD_0(s)  = \{ (\R(1,0), 1+tx^s) , (\R(0,1), 1+ty^s) \}.
	\]
Scattering diagrams `localise' the problem of finding a compatible structure, replacing it with the problem of achieving a certain consistency condition near each scattering diagram by adding new rays for each $k \in \Z_{\geq 0}$. The constructive proof that any scattering diagram can be made consistent by adding rays is due to Kontsevich--Soibelman~\cite{KS06}. The rays added to $\fD_0(s)$ by the scattering process admit a periodicity, resulting in a recursive formula identical to that found in Lemma~\ref{lem:recursive_rays}. Let $\fD_k(s)$ denote the scattering diagram after adding rays to order $k$ to $\fD_0(s)$, and let $\fD(s)$ denote the limiting scattering diagram as $k \to \infty$. Figure~\ref{fig:scattering_1} illustrates the supports of the rays of $\fD(s)$.

Fix a Fano triangle $P$ associated to a Markov triple $\ba$, and let $s$ denote the wedge product of the inner normal vectors to edges $E_1$ and $E_2$ (using the notation for edges of $P$ introduced in \S\ref{sec:fano_polygons}). Put $P$ in standard form with respect to $v := \rho^\star(v_3)$.

\begin{prop}
	\label{prop:rays_and_triangles}
	The set of rays $\left\{\d^i_j : i \in \{1,2\}, j \in \Z_{>0} \right\}$ given in Definition~\ref{dfn:rays_from_edges} is equal to:
	\[
	\left\{\d : (\d,f_\d) \in \fD(s), m_\d > \frac{s+\sqrt{s^2-4}}{2} \textrm{ or } m_\d < \frac{s-\sqrt{s^2-4}}{2} \right\},
	\]
	where $s$ is the determinant of the map $\rho$ and $m_\d$ is the slope of the ray $\d$.
\end{prop}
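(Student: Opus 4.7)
The plan is to identify both sides of the asserted equality as the output of the same second-order recurrence seeded by the same initial data. On the $\Diag(P)$ side, Lemma~\ref{lem:recursive_rays} already displays $\{\d^i_j : i \in \{1,2\}, j \in \Z_{>0}\}$ as the rays generated by the recursion $v^i_{j+1} = sv^i_j - v^i_{j-1}$ of Definition~\ref{dfn:rays_from_edges}. On the $\fD(s)$ side, I would appeal to the classical structural description of rank-two scattering diagrams due to Kontsevich--Soibelman, in the form treated in~\cite{GPS,GHKK}: beginning from the two initial lines of $\fD_0(s)$, the iterative scattering procedure produces two sequences of outer rays --- one converging to each asymptote --- whose primitive direction vectors obey precisely this recurrence.

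The concrete steps would be the following. First, match initial conditions by identifying the primitive directions $(0,1)$ and $(1,0)$ supported on the two lines of $\fD_0(s)$ with the seed vectors $v^1_1$ and $v^2_1$ of Definition~\ref{dfn:rays_from_edges}. Second, show by induction that two consecutive outer rays with primitive directions $u=v^i_{j-1}$ and $u'=v^i_j$ produce, via the standard rank-two consistency calculation at their intersection, a single further outer ray in primitive direction $su' - u = v^i_{j+1}$; this relies on the fact that the wall functions carried by outer rays remain of the basic form $1+t^\ell z^\alpha$, so the commutator computation collapses to a single new ray. Third, match the boundary slopes: by Lemma~\ref{lem:asymptote_slope} the values $m^i_\infty = \frac{s - (-1)^i\sqrt{s^2-4}}{2}$ are precisely the roots of the characteristic polynomial $x^2 - sx + 1$ of the recurrence, so both sequences $\{\d^1_j\}$ and $\{\d^2_j\}$ lie strictly outside the interval $[m^2_\infty, m^1_\infty]$ and accumulate onto its endpoints from outside. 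Combining these three steps yields the inclusion $\{\d^i_j\} \subseteq \{\d \in \fD(s) : m_\d \notin [m^2_\infty, m^1_\infty]\}$.

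The main obstacle is the reverse inclusion: ruling out the existence of further rays of $\fD(s)$ with slope strictly outside the asymptotic interval not accounted for by the two sequences. For this I would use the fact that scattering in rank two is local and that each ray produced at an intersection of two earlier rays has primitive direction lying in the positive cone they span. An induction on the order $k$ of the truncated diagrams $\fD_k(s)$, with the base case that all rays of $\fD_0(s)$ are axial, then forces every outer ray in $\fD(s)$ to arise from the interaction of two consecutive earlier outer rays. The set of outer rays of $\fD(s)$ is therefore exactly the disjoint union of the two sequences $\{\d^1_j\}$ and $\{\d^2_j\}$, completing the identification.
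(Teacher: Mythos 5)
Your overall strategy --- exhibit both sides as the output of the recursion $v^i_{j+1}=sv^i_j-v^i_{j-1}$ with matching seeds $(1,0)$, $(0,1)$ --- is the paper's strategy as well, and had you simply \emph{cited} the structural description of $\fD(s)$ outside the quadratic-irrational slopes (the paper invokes the mutation invariance of scattering diagrams and \cite[Example~$1.15$]{GHKK}, where every ray beyond those bounds is obtained by alternately applying the matrices $S_1$, $S_2$ to $(1,0)^T$ and $(0,1)^T$), the only remaining work would be the change-of-recursion check, which you and the paper both do. The gap is that your ``concrete steps'' attempt to rederive that structural input by a local induction, and the mechanism you describe does not work.

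Concretely: (i) every ray of $\fD(s)$ emanates from the origin, so there is no intersection point of two consecutive outer rays at which a ``rank-two consistency calculation'' could be performed; to speak of pairwise interactions you must pass to a perturbed diagram and take its asymptotic version, and the combinatorics there is not the one you describe. (ii) More seriously, a local scattering at a joint only produces walls whose exponents lie in the positive span of the exponents of the incoming walls, whereas $su'-u$ has coefficient $-1$ on $u$: for example $(s,s^2-1)=s\,(1,s)-(0,1)$ does \emph{not} lie in $\Cone\{(0,1),(1,s)\}$. Thus your step (2) is incompatible with the very positivity/locality principle your reverse-inclusion argument relies on. (iii) The claim that the commutator of two walls with binomial functions ``collapses to a single new ray'' fails in the relevant range: already the two initial walls $1+tx^s$, $1+ty^s$ with $s=3a_3\geq 3$ generate infinitely many new walls --- this is precisely why $\fD(s)$ has a region dense with rays. (iv) Finally, the reverse inclusion does not follow from the positive-cone constraint: a hypothetical extra ray with slope between two consecutive $\d^i_j$, hence still outside $\left[m^2_\infty,m^1_\infty\right]$, also lies in the first quadrant, so nothing in your induction excludes it. Excluding such rays (and showing each $\d^i_j$ really carries a non-trivial function) is exactly the content of the cluster-chamber/mutation-invariance result of \cite{GHKK} (equivalently, of Reineke-type results for the Kronecker quiver), and you should appeal to it, as the paper does, rather than attempt the local induction.
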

\begin{proof}
	Each of the rays $\d^i_j$ is generated by a vector $v^i_j$ which are defined via the recursive formula
	\[
	v^i_{j-1} + v^i_{j+1} = sv^i_j.
	\]
	We show that the rays -- with slopes satisfying the given bounds -- in the support of $\fD(s)$ satisfy the same recursion relation. This follows from the invariance of the scattering diagram under cluster mutations proven in \cite{GHKK}. In particular in \cite[Example~$1.15$]{GHKK} it is observed that every ray -- between the specified bounds -- is generated by alternately applying the linear transformations with matrices 
	\begin{align*}
	 S_1 := \begin{pmatrix} -1 & s \\ 0 & 1 \end{pmatrix} && S_2 := \begin{pmatrix}  1 & 0 \\ s & -1  \end{pmatrix}
	\end{align*}
	to the vectors	$(1,0)$ and $(0,1)$. Note that there is a sign difference between our transformations and those considered in \cite{GHKK}, as our rays lie in the first quadrant of $\R^2$ rather than the fourth quadrant. Rays in the support of $\fD(s)$ are generated by vectors $\tilde{v}^i_j$ for $i \in \{1,2\}$, $j \in \Z_{>0}$, where:
	\begin{itemize}
		\item  $\tilde{v}^1_1 := (0,1)^T$, $\tilde{v}^2_1 := (1,0)^T$,
		\item  $\tilde{v}^2_{j+1} := S_1\tilde{v}^1_k$, and,
		\item  $\tilde{v}^1_{j+1} := S_2\tilde{v}^2_k$.
	\end{itemize}
	It is easily verified that $\tilde{v}^i_j = \begin{pmatrix}0&1 \\1&0\end{pmatrix}\tilde{v}^{3-i}_j$, and hence, writing $\tilde{v}^1_j = \begin{pmatrix}y_{j-1}\\ y_j\end{pmatrix}$, 
	\begin{align*}
	\tilde{v}^1_{j+1} &= \begin{pmatrix}0&1 \\1&0\end{pmatrix} \begin{pmatrix}  -1 & s \\ 0 & 1  \end{pmatrix} \begin{pmatrix}
	y_{j-1} \\ y_j
	\end{pmatrix} \\ &= \begin{pmatrix}
 	  y_j \\ sy_j-y_{j-1}
 	 \end{pmatrix}.
	\end{align*}
	Thus $y_{j+1} + y_{j-1} = sy_j$, and hence $y_j = x_j$, and $\tilde{v}^i_j = v^i_j$ for all $i$ and $j$, as required.
\end{proof}

\begin{remark}
	While the association of this set of rays with the triangles of the previous section seems somewhat mysterious, it is in fact tautological given the connection that both concepts have with cluster algebras. The relationship between combinatorial mutation and cluster algebras is explored in~\cite{KNP15}, and the deep connections between scattering diagrams and cluster algebras are explored in~\cite{GHKK}.
\end{remark}

The following is a well-known expectation in the theory of scattering diagrams, although we do not know a reference for a proof. This conjecture is referred to as an expectation in \cite{GPS,GHKK}, and the statement is assumed in \cite[p.$5$]{B17}.

\begin{conj}
	\label{conj:dense_with_rays}
	Every ray $\d$ in the positive quadrant with rational slope between $\frac{s - \sqrt{s^2-4}}{2}$ and $\frac{s + \sqrt{s^2-4}}{2}$ appears in the support of the scattering diagram $\fD_k(s)$ -- that is, appears with non-trivial function $f_\d$ -- for some $k$.
\end{conj}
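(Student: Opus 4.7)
The plan is to convert the conjecture into a statement about non-vanishing Donaldson--Thomas / BPS invariants via the well-known correspondence between consistent rank-two scattering diagrams and the representation theory of a Kronecker-type quiver $K_s$ (following Reineke, and developed in \cite{GPS,GHKK}). Under this correspondence, a primitive ray $\R_{\geq 0}(a,b)$ supports a non-trivial wall of $\fD(s)$ exactly when the moduli space of slope-semistable $K_s$-representations of charge proportional to $(a,b)$ is non-empty, and the wall function records a refined Euler characteristic of this moduli.

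First I would fix a primitive $(a,b) \in \Z^2_{>0}$ with $\gcd(a,b)=1$ and slope $b/a$ lying in the open interval $\bigl(\tfrac{s-\sqrt{s^2-4}}{2},\, \tfrac{s+\sqrt{s^2-4}}{2}\bigr)$. The essential algebraic input is that the Euler form of $K_s$ evaluates to
\[
\chi\bigl((a,b),(a,b)\bigr) \;=\; a^2+b^2-sab,
\]
which is strictly negative precisely because $b/a$ satisfies $(b/a)^2 - s(b/a) + 1 < 0$. Thus $(a,b)$ is a positive imaginary root of $K_s$, and Kac's theorem produces a positive-dimensional family of indecomposable $K_s$-representations with dimension vector $(a,b)$. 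I would then invoke Schofield's theory of general subrepresentations to upgrade indecomposability to slope-semistability with respect to the tautological stability determined by $(a,b)$ itself, yielding a non-empty moduli space and hence a non-zero DT invariant.

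With non-emptiness of $M^{\mathrm{ss}}_{(a,b)}$ established, the Reineke--Kontsevich--Soibelman wall-crossing formula forces a non-trivial wall of $\fD(s)$ along $\R_{\geq 0}(a,b)$: otherwise consistency of the path-ordered product around a small loop through this direction would fail at the relevant order. Since $\fD(s)$ is the limit of the $\fD_k(s)$ and the Kontsevich--Soibelman scattering procedure manufactures each individual wall at some finite order in the grading parameter $t$ — controlled by a simple linear function of $a+b$ — the wall along $\R_{\geq 0}(a,b)$ appears already in $\fD_k(s)$ for some finite $k$, which is the desired conclusion.

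The main obstacle is the uniform passage from indecomposability to semistability. Kac's theorem on its own supplies indecomposable representatives but says nothing about stability, so Schofield's criterion must be applied for every primitive $(a,b)$ in the wild cone; doing this cleanly requires an induction on dimension vector together with non-trivial facts about general representations of wild quivers, and controlling the induction uniformly in $s$ is presumably the technical reason that the statement persists in the literature as an expectation rather than a theorem. An alternative route, avoiding quiver representations altogether, would be to produce non-vanishing theta functions or broken-line counts supported on each rational ray directly within the GHKK positivity framework; either strategy requires substantial additional input beyond what is assembled in the present paper.
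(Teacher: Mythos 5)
The first thing to be clear about is that the paper does not prove this statement at all: it is stated as Conjecture~\ref{conj:dense_with_rays}, explicitly flagged as ``a well-known expectation in the theory of scattering diagrams'' for which the author knows no reference, and item~(3) of Theorem~\ref{thm:superpotential} is proved only \emph{conditionally} on it. So there is no proof in the paper to compare yours against; the relevant question is whether your proposal closes the gap that the paper deliberately leaves open, and it does not.

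Your strategy (translate non-triviality of $f_\d$ into non-vanishing of Kronecker-type DT invariants via Reineke/GPS, then produce semistable objects in every primitive class $(a,b)$ with $a^2+b^2-sab<0$) is the natural one, but as written it has two genuine holes. First, the step you yourself flag is exactly the content of the conjecture: Kac's theorem gives indecomposables for imaginary roots, but what you need is that every primitive $(a,b)$ in the open cone is a Schur root admitting semistables for the tautological stability, \emph{and} that the resulting wall function is non-trivial -- i.e.\ that the relevant (refined) DT invariants do not cancel. Appealing to ``Schofield's theory'' and ``an induction on dimension vectors'' without carrying it out, and without invoking a positivity statement (e.g.\ the positivity of \cite{GHKK}) to rule out cancellation in $f_\d$, leaves the proof incomplete precisely where it matters; your closing paragraph concedes as much. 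Second, a smaller but real mismatch: the diagram $\fD_0(s)$ here has initial functions $1+tx^s$ and $1+ty^s$, whereas the Reineke/GPS dictionary with the $s$-Kronecker quiver is stated for initial functions $(1+tx)^s$, $(1+ty)^s$; one must pass through the change-of-lattice argument of \cite{GPS} to transfer non-vanishing statements between the two diagrams, and this is not addressed. In short, the proposal is a reasonable research plan, not a proof, and it does not establish anything beyond what the paper already assumes.
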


As well as the notion of a scattering diagram we will utilize the notion of a \emph{broken line} from \cite{G09,CPS11}. These will provide an enumerative interpretation of the Laurent polynomials mirror to $\P^2$ as described in Theorem~\ref{thm:P2_classification}. The notion of broken line is very close to that of a \emph{tropical disc}: broken lines can \emph{bend} on the walls of a scattering diagram and one can canonically complete these bends so that the resulting object is a tropical curve \emph{with stops} (following the terminology of \cite{Nishinou:discs}). For more details see \cite[Lemma~$5.4$]{CPS11}.

The idea of calculating a superpotential tropically, utilising broken lines in the affine manifold, was first explored in \cite{CPS11}. In \S\ref{sec:proof} we show that there is a domain $U$ in the dual intersection complex $\breve{B}_{\P^2}$ of a toric degeneration of $(\P^2,E)$ such that the tropically defined superpotential is equal to the family of Laurent polynomials described in \cite{Triangles}.

In an ideal setting tropical curves should be the `spines' of images of holomorphic curves under a special Lagrangian torus fibration. Tropical discs are similar, but now the curve has boundary and a `stop' is introduced where the tropical disc terminates. For a more detailed discussion of this point see \cite{Nishinou:discs,CPS11,G09}. 

\begin{definition}
	\label{dfn:broken_line}
	Fixing a value of $k \in \Z_{\geq 0}$, a \emph{broken line} is a proper continuous map
	\[
	\beta \colon (-\infty,0] \rightarrow B
	\]
	with `bends' at a sequence of points $-\infty = t_0 < t_1 < \cdots < t_r = 0$ such that $\beta|_{(t_j,t_{j+1})}$ is an affine map with image disjoint from the rays of $\sS[k]$.
	
	The broken line $\beta$ carries a sequence of monomials $a_jz^{m_j}$ such that $\beta'(t) = \bar{m}_j$. Fixing a value of $i \in \{0,\ldots,r\}$, let $\fu$ and $\fu'$ denote the distinct chambers of $\sS[k]$ containing $\beta(t_i-\epsilon)$ and $\beta(t_i+\epsilon)$ for sufficiently small $\epsilon \in \R_{>0}$ respectively. The monomial $a_jz^{m_j}$ defines a unique element in the ring $R^k_{\tau,\tau,\fu}$ and the wall-crossing formula $\theta_{\fu, \fu'}$ defines a collection of monomials with order $\leq k$; these are the \emph{results of transport} of $a_jz^{m_j}$.
	
	We also insist that $a_1 = 1$, and that there is an unbounded 1-cell of $\sP$ parallel to $\bar{m}_1$ for which $m_1$ has order zero.
\end{definition}



Given a general\footnote{This is a generic condition, see \cite[Proposition~$4.4$]{CPS11} and \cite[Definition~$4.6$]{CPS11}.} point $p \in B$, denote the set of broken lines $\beta$ with $\beta(0) = p$ by $\mathfrak{B}(p)$. For a given structure $\sS[k]$ on $B$, and a chamber $\fu$ such that $p \in \fu$, we can produce the \emph{superpotential at order} $k$ as an element of $R^k_{\omega,\tau,\fu}$, taking
\[
W^k_{\omega,\tau,\fu}(p) = \sum_{\beta \in \mathfrak{B}(p)}a_\beta z^{m_\beta}
\]

In \cite{CPS11} the authors obtain various results for $W^k_{\omega,\tau,\fu}(p)$, two of which we shall utilize in \S\ref{sec:proof}.
\begin{enumerate}
	\item The superpotential $W^k_{\omega,\tau,\mathfrak{u}}(p)$ is independent of the choice of $p \in \mathfrak{u}$~\cite[Lemma 4.7]{CPS11}.
	\item The superpotentials are compatible with changing strata and chambers~\cite[Lemma 4.9]{CPS11}.
\end{enumerate}

The content of the second point here is that, applying a change of chamber map to the superpotential, one obtains
\[
\theta_{\fu,\fu'}(W^k_{\omega,\tau,\fu}) = W^k_{\omega,\tau,\fu'}
\]
where we have suppressed the dependence of  $W^k_{\omega,\tau,\fu}(p)$ on $p$ using the first point. This formula implies that the superpotential changes by algebraic mutation discussed in Remark~\ref{rem:alg_mutation}. To see this, we need to compare the rings $R^k_{\omega,\tau,\fu}$ and $\field[M]$ of which the respective superpotentials are elements. In \S\ref{sec:proof} we shall find that the superpotential $W^k_{\omega,\tau,\fu}$ is, in the terminology of \cite{CPS11}, \emph{manifestly algebraic} in a subset $V \subset \breve{B}$. The main consequence of this definition is that the limit $W$ of polynomials $W^k$ is also polynomial. Following \cite[p.$277$]{TropGeom}, $R^k_{\omega,\tau, \fu}$ is a localisation of the ring
\[
\field[P_{\varphi,\omega}]/I^k_{\omega,\tau,\sigma_\fu},
\]
where $\sigma_\fu$ is defined in \cite[p.$276$]{TropGeom} and $I^k_{\omega,\tau, \sigma_\fu}$ is defined in \cite[p.$265$]{TropGeom}. Thus, for sufficiently large values of $k$, the lift of $W^k_{\omega,\tau,\fu}$ to $\field[P_{\varphi,\omega}]$ is independent of $k$. In fact, by Theorem~\ref{thm:superpotential}, the decompositions $\sP_k$ can be chosen so that $\fu$ does not undergo subdivision for large values of $k$. Taking the projection $\field[P_{\varphi,\omega}] \rightarrow \field[M]$ induced by setting $t=1$, we can represent $W$ as a single Laurent polynomial. We summarise the definition of the wall crossing formula appearing in \cite{CPS11} and \cite{G09} in the following Lemma.

\begin{lem}
	The wall crossing formula
	\[
	\theta^k_{\fu,\fu'}(z^m) = z^m \prod{f^{\langle n,\bar{m} \rangle}_\d}
	\]
	defines a birational map $\theta^k_{\fu,\fu'} \colon \field(M) \rightarrow \field(M)$. If there is only a single ray supported on $\d$ and $f_\d = 1+c_mz^m$ for some exponent $m$ then the birational map $\theta^k_{\fu,\fu'}$ is an \emph{algebraic mutation} with factor polynomial $(1+c_mz^{\bar{m}})$.
\end{lem}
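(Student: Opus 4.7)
The plan is to verify three things: that the formula extends to a well-defined ring homomorphism, that it is invertible, and that in the special case it agrees with the algebraic mutation formula of Remark~\ref{rem:alg_mutation}.

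First I would check that the assignment $z^m \mapsto z^m \prod_\d f_\d^{\langle n_\d,\bar m\rangle}$ respects the group law on $M$. Since $\bar m \mapsto \langle n_\d,\bar m\rangle$ is linear and the product over $\d$ is finite (only finitely many walls separate $\fu$ and $\fu'$ at order $\leq k$), we have $\theta^k_{\fu,\fu'}(z^{m+m'}) = \theta^k_{\fu,\fu'}(z^m)\theta^k_{\fu,\fu'}(z^{m'})$ in $\field(M)$. Each $f_\d$ is a nonzero element of $\field[M]$, hence a unit in $\field(M)$, so the image of $z^m$ lies in $\field(M)^\times$; extending $\field$-linearly gives a ring homomorphism $\field[M]\to\field(M)$, which factors through a $\field$-algebra homomorphism $\field(M)\to\field(M)$.

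Next I would exhibit an inverse. The inward normal $n_\d$ to $\d$ (relative to $\fu$) is the outward normal relative to $\fu'$, so the natural candidate is $\theta^k_{\fu',\fu}(z^m) = z^m\prod_\d f_\d^{-\langle n_\d,\bar m\rangle}$. A direct computation yields
\[
\theta^k_{\fu',\fu}\circ\theta^k_{\fu,\fu'}(z^m) = z^m\prod_\d f_\d^{\langle n_\d,\bar m\rangle}f_\d^{-\langle n_\d,\bar m\rangle} = z^m,
\]
and similarly in the reverse order. Hence $\theta^k_{\fu,\fu'}$ is a birational automorphism of $\Spec\field[M]$, with inverse $\theta^k_{\fu',\fu}$.

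For the second assertion, suppose there is a single ray $\d$ with $f_\d = 1+c_mz^m$. The exponent $m$ lives in the extended semigroup $P_{\varphi,\omega}$, and its projection $\bar m \in M$ is the primitive tangent direction to $\d$, so in particular $\langle n_\d,\bar m\rangle = 0$ (which is the required consistency condition that $f_\d$ itself is fixed by $\theta^k_{\fu,\fu'}$). Writing a general exponent as $m'$, the wall-crossing formula specialises to
\[
\theta^k_{\fu,\fu'}(z^{m'}) = z^{m'}(1+c_mz^{\bar m})^{\langle n_\d,\bar{m'}\rangle}.
\]
Setting $w := n_\d \in M^\vee$ (so that $\bar m \in w^\bot$ is primitive) and comparing with Remark~\ref{rem:alg_mutation}, this is exactly the algebraic mutation $\theta^\star_{w,F}$ with factor polynomial $F = 1+c_mz^{\bar m}$.

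The only real subtlety, rather than an obstacle, is keeping careful track of the convention that $m \in P_{\varphi,\omega}$ is an exponent in the extended ring while $\bar m \in M$ is its projection; this is what ensures $f_\d$ pulls back to a genuine binomial on $M$ and $\langle n_\d,\bar m\rangle = 0$, so that the two conventions for mutations match up on the nose.
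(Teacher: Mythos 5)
The paper itself gives no proof of this lemma: it is introduced with the sentence ``We summarise the definition of the wall crossing formula appearing in [CPS11] and [G09] in the following Lemma,'' and the text immediately after it just records the consequence for how the superpotential changes across a wall. So you are not deviating from the paper's argument — you are supplying a verification that the paper silently delegates to its references. Your verification is correct in both parts and is essentially the only reasonable unwinding of the definitions.

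One point of imprecision worth tightening: in the inverse computation
\[
\theta^k_{\fu',\fu}\circ\theta^k_{\fu,\fu'}(z^m) = z^m\prod_\d f_\d^{\langle n_\d,\bar m\rangle}f_\d^{-\langle n_\d,\bar m\rangle}
\]
the middle expression skips a step. Since $\theta^k_{\fu',\fu}$ is applied to the entire element $z^m\prod_\d f_\d^{\langle n_\d,\bar m\rangle}$, you should first write
\[
\theta^k_{\fu',\fu}\Bigl(z^m\prod_\d f_\d^{\langle n_\d,\bar m\rangle}\Bigr) = \theta^k_{\fu',\fu}(z^m)\cdot\prod_\d \theta^k_{\fu',\fu}(f_\d)^{\langle n_\d,\bar m\rangle},
\]
and only then invoke the fact that every exponent of $f_\d$ has projection tangent to the wall, so $\theta^k_{\fu',\fu}(f_\d) = f_\d$. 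You do observe precisely this consistency condition — ``$\langle n_\d,\bar m\rangle = 0$ \ldots $f_\d$ itself is fixed by $\theta^k_{\fu,\fu'}$'' — but you deploy it only in the algebraic-mutation case, when in fact it is also what makes the cancellation in the inverse computation legitimate. A related cosmetic remark: the extension of the homomorphism from $\field[M]$ to $\field(M)$ requires injectivity (equivalently, that nonzero elements map to nonzero elements); this is most cleanly deduced after you have exhibited the inverse, so the logical order of your two paragraphs is slightly circular as written, though easily rearranged.
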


Thus the result of \emph{crossing a wall} is that the function recorded at the base point, viewed simply as a Laurent polynomial, undergoes a birational change of variables which is precisely the mutation with factor given by the line segment in the direction of the wall. This is an essential ingredient in the proof of Theorem~\ref{thm:superpotential_2} since it will allow us to compute the superpotential in every chamber from a calculation of broken lines in a single chamber.
\section{The Affine Manifold $\breve{B}_{\PP^2}$}
\label{sec:aff_mfld}

We now consider the affine structure on the dual intersection complex $\breve{B}_{\P^2}$ for a toric degeneration of $\P^2$. This is described in \cite[Example~$2.4$]{CPS11}. The authors of \cite{CPS11} consider the affine structure on the intersection complex and dual intersection complex of a so-called \emph{distinguished toric degeneration}. Given the pair $(\mathbb{P}^2,E)$ for a smooth genus one curve $E$, a distinguished toric degeneration will give an intersection complex as shown in Figure~\ref{fig:intersection_complex}, as shown in the proof of Theorem~$6.4$ in \cite{CPS11}.

For a precise definition of the discrete Legendre duality between $B_{\P^2}$ and $\breve{B}_{\P^2}$ see~\cite{GS1}. Rather than provide this definition here we will describe $\breve{B}_{\P^2}$ as an affine manifold. The affine manifold $B_{\P^2}$ associated to the \emph{intersection} complex is shown in Figure~\ref{fig:intersection_complex}. The affine structure on $\breve{B}_{\P^2}$ is such that the three `outgoing' unbounded $1$-cells of $\sP$ are parallel to each other, the dual condition to the requirement that $B_{\P^2}$ have smooth (flat) boundary. In particular, as a topological manifold, $\breve{B}_{\P^2}$ is isomorphic to $\R^2$ and its affine structure contains three focus-focus singularities. This affine structure is described in \cite{CPS11}, and is illustrated in Figure~\ref{fig:initial_lines}.

Charts are described by cutting along the invariant lines of each focus-focus singularity. More precisely, letting $P$ denote the central triangular region shown in Figure~\ref{fig:initial_lines}, we fix six charts
\[
\Phi_{v_1,v_2}\colon U_{v_1,v_2} \to \R^2
\]
on $\breve{B}_{\P^2}$, where $v_1, v_2 \in \V{P}$ and $v_1 \neq v_2$. The domain $U_{v_1,v_2} \subset \breve{B}_{\P^2}$ is formed by removing a ray emanating from each of the three focus-focus points such that none of these three rays contains $v_1$ and exactly one contains $v_2$, and taking the connected component of the resulting space which contains $P$. An example of such a chart is shown in Figure~\ref{fig:gluing_things}(a). The map $\Phi_{v_1,v_2}$ identifies $U_{v_1,v_2}$ with the domain in $\R^2$ formed by removing rays from each singular point from $P$ -- regarded as a subset of $\R^2$ -- and restricting to the connected component containing $P$. This identification is made such that the transition function $\Phi_{v_2,v_1}\circ\Phi^{-1}_{v_1,v_2}$ is the identity on $P$, and conjugate to $
\begin{pmatrix}
1 & 1 \\ 0 & 1
\end{pmatrix}$
on the unique half space $H  \subset \R^2$ such that $v_1,v_2 \in \partial H$.


\begin{remark}
	Following the work of Gross--Hacking--Keel for log Calabi--Yau manifolds \cite{GHK1,GHK2} one might attempt to consider the affine manifold obtained by regarding all the singularities of $\breve{B}_{\P^2}$ as lying at the origin, which would -- in the setting described in in \cite{GHK1,GHK2} -- play the role of $U^{trop}$, for a log Calabi--Yau $U$. However in this case $U$ does not have \emph{maximal boundary}: the resulting affine manifold is a single ray and does not fit easily into this framework.
\end{remark}

Following the Gross--Siebert program~\cite{GS1}, we endow the 1-cells $\tau$ of $B_{\P^2}$ supporting $\Delta$ with slab functions $f_\tau$ defining a log structure on a reducible union of toric varieties. We shall make the standard choices of normalisation so that $f_\tau$ is $(1+z^{m})$ where $m$ is an exponent such that $\bar{m}$, a integral lattice tangent vector to $\breve{B}_{\P^2}$ at a point on $\tau$, is primitive, lies parallel to $\tau$ and toward the focus-focus singularity. We use the following construction to describe the set of (supports of) rays which appear in $\sS[k]$ for some $k \in \Z_{\geq 0}$.

\begin{figure}
	\includegraphics[scale = 0.8]{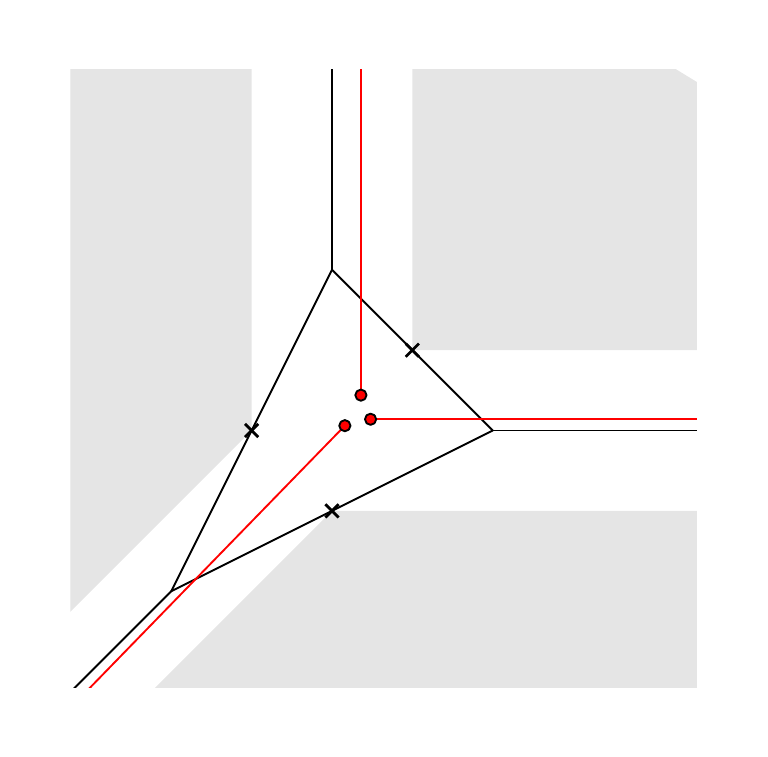}
	\caption{Broken lines in the central region}
	\label{fig:initial_lines}
\end{figure}

\begin{construction}
	\label{cons:support}
	Let $\Rays_1$ be the set of six rays emanating from $\Delta$ parallel to an edge of $P$; these rays intersect at the vertices of $P$, and assume that we have constructed sets $\Rays_k$. For each intersection point $v$ of rays $\d_1$ and $\d_2$ in $\Rays_k$, let $A_v \colon \R^2_{\geq 0} \to \breve{B}_{\P^2}$ be the map taking $(0,0) \mapsto v$ and the vectors $(1,0)$, $(0,1)$ to the (integral) tangent directions of the rays $\d_1$ and $\d_2$. Set
	\[
	\Rays_{k+1} := \Rays_k {\textstyle\coprod} \coprod_{\substack{v \in \Ima\d_1 \cap \Ima\d_2 \\ \d_1,\d_2 \in \Rays_k}} \left\{A_v(\d) : (\d,f_\d) \in \fD(s)\right\},
	\]
	where $s$ is the (absolute value of the) wedge product of the direction vectors of $\d_1$ and $\d_2$, and we assume that $\d_1 \neq \d_2$. It follows from \cite[Theorem~$6.49$]{TropGeom} that the set $\bigcup_{i \geq 0}{\Rays_i}$ is the set of supports of rays in the compatible structure $\sS$.
\end{construction}

We use \cite[Corollary~$6.8$]{CPS11} to compute the tropical superpotential in $\breve{B}_{\P^2}$. This states that, for a base point in the interior of the bounded cell of $\sP$, the superpotential for this structure is given by the usual Givental/Hori--Vafa superpotential:
\[
W = x+y+\frac{1}{xy},
\]
see Figure~\ref{fig:initial_lines}. By \cite[Lemma~$4.9$]{CPS11}, this calculation determines the superpotential in every other chamber, using the wall-crossing formula $\theta_{\fu,\fu'}$ to pass between chambers.


\section{Proof of Theorems $1.1$ and $1.2$}
\label{sec:proof}

Throughout this section we fix the Fano triangle $P := \conv{(1,0),(0,1),(-1,-1)} \subset N_\R$. We also use $P$ to denote the central region in the affine manifold $\breve{B} := \breve{B}_{\P^2}$, see Figure~\ref{fig:initial_lines}. Note that these polygons are identified by the chart $\Phi_{v,v'}$ for any distinct pair of vertices $v,v' \in \V{P}$. We first construct the subset $V \subset \breve{B}$ which appears in the statement of Theorem~\ref{thm:superpotential}. We then identify the rays of the structure $\sS$ which intersect $V$ in a line segment with edges of triangles in $\Region(P,v)$ for some $v \in \V{P}$. We use this to show that we can choose polyhedral decompositions $\sP_k$ such that for any $v \in \V{P}$ and $\fu \in \Region(P,v)$, $\fu \in \sP_k$ for all sufficiently large $k$.

Fixing a vertex $v$ of the polygon $P$, set
\[
\cT_v := \left\{\cl\left(\Phi_{v,v'}^{-1}(\fu^\circ)\right) : \fu \in \Region(P,v)\right\}.
\]
where $v' \in \V{P}$ is different from $v$. We also define $V_v := \bigcup_{\fu \in \cT_v}\fu \subset \breve{B}$; the support of $\cT_v$. Note that, since the transition function between the two possible charts $\Phi_{v,v'}$ acts as the identity on $V_v$, the choice of vertex $v'$ does not affect the subset $V_v$ or any triangle $\fu \in \cT$.

\begin{remark}
	Taking the interior $\fu^\circ$ of $\fu \in \Region(P,v)$ is necessary, since $\Region(P,v)$ is not contained in the image of $U_{v,v'}$. We note however that the complement of $\Ima \Phi_{v',v}$ is contained in a pair of edges of $P$, and does not intersect the interior of any $\fu \in \Region(P,v)$.
\end{remark}

We define $\cT := \bigcup_{v \in \V{P}}{\cT_v}$, and set
\[
V := \bigcup_{v \in \V{P}}{V_v} \subset \breve{B},
\]

This definition of $\cT$ identifies chambers in different set $\cT_v$. In particular the canonical map $I \colon \coprod_{v \in \V{P}}{\cT_v} \to \cT$, is a two-to-one function onto all elements in $\cT\setminus \{P\}$. This follows from the fact -- see Lemma~\ref{lem:V_1} -- that $I$ identifies three pairs of the six triangles $\fu^i_1 \in \Diag_1(P,v)$, obtained by varying $i \in \{1,2\}$ and $v \in \V{P}$. Given a vertex $v$ of $P$, consider the diagram $\Diag_1(P,v)$, and define the triangle
\[
T_{i,v} := \cl(\Phi^{-1}_{v,v'}({\fu^i_1}^\circ)),
\]
where $v' \neq v$ is a vertex of $P$. This construction produces six triangles $T_{i,v} \in \cT$. We now show that three pairs of these triangles are identical.

\begin{figure}[H]
	\centering
	\subfigure[Gluing a diagram in $U_{v,v'}$]{%
		\includegraphics[scale=0.85]{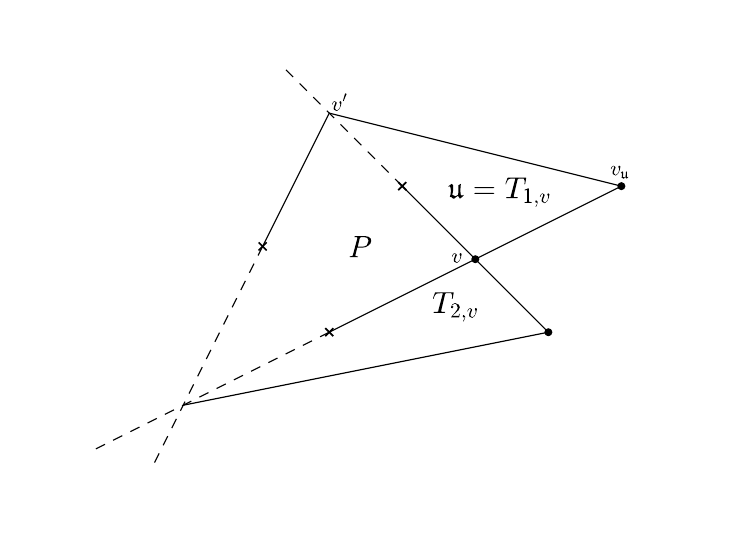}
		\label{fig:chart_1}
	}
	\quad
	\subfigure[Gluing in $\Diag_1(P_\fu)$]{%
		\includegraphics[scale=0.85]{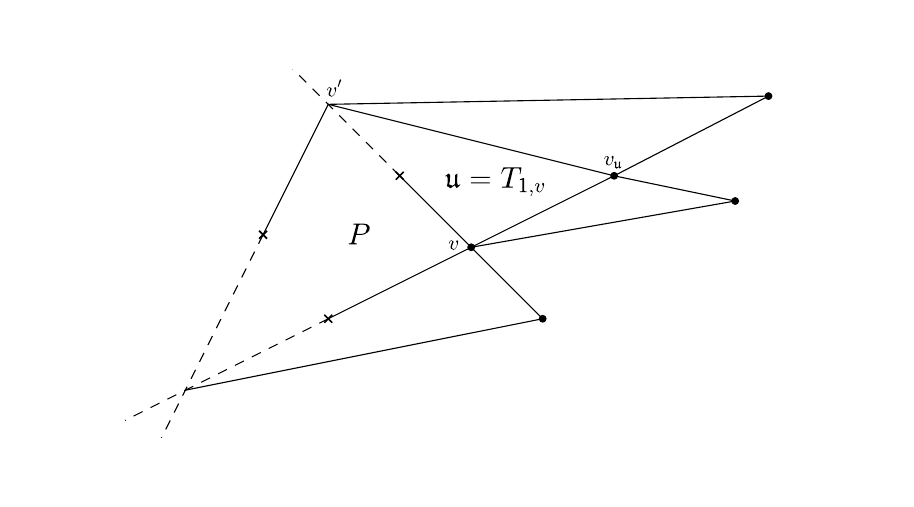}
		\label{fig:chart_2}}
	\subfigure[Gluing in $\Diag_1(P_{\fu'})$]{%
		\includegraphics[scale = 0.9]{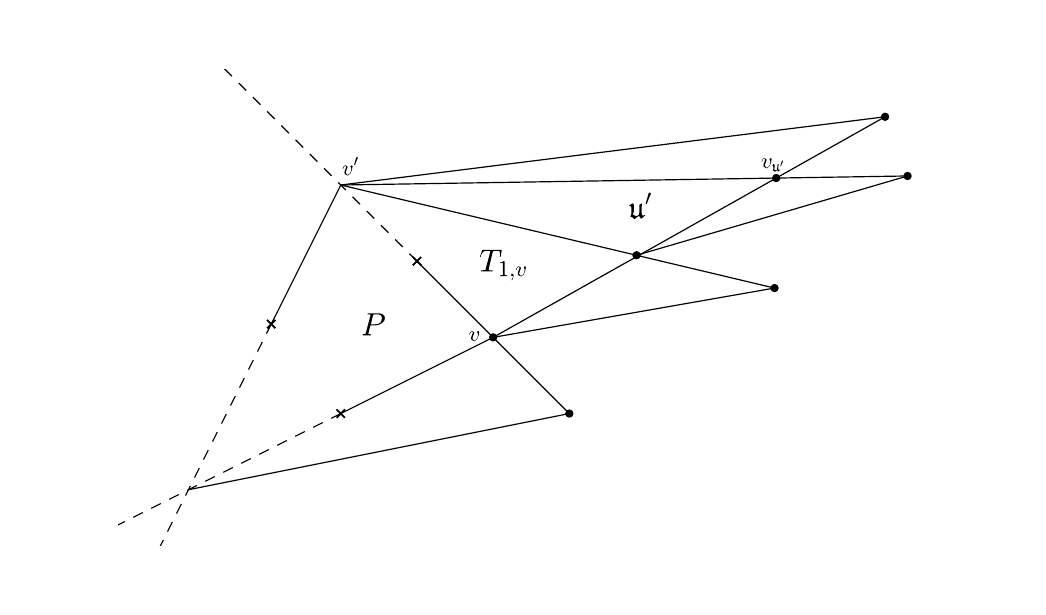}
		\label{fig:chart_3}}
	\caption{Building up $\Chambers(\sS,k)$ using polygon mutation \label{fig:gluing_things}}
	
\end{figure}

\begin{lem}
	\label{lem:V_1}
	Let $v$ and $v'$ be vertices of $P$, and fix $i \in \{1,2\}$ such that $v' \in T_{i,v}$; then $T_{v,i} = T_{v',3-i}$ as subsets of $\breve{B}$.
\end{lem}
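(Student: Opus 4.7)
The plan is to verify two facts: first, that both triangles $T_{i,v}$ and $T_{3-i,v'}$ are attached to the central triangle $P$ along the common edge $E = [v, v']$; and second, that their Euclidean representatives in the two natural charts are identified by the relevant transition function.

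For the first fact, I would unravel the $\Diag_1$ construction. By Definition~\ref{dfn:diagram}, the triangle $\fu^i_1 \in \Diag_1(P,v)$ is the image of $P^i_1$ under the rescaling $S_{P, E_i, E_{3-i}}$, and hence shares the edge $E_i$ of $P$. This edge is, by definition, the edge of $P$ disjoint from the vertex $v_i$ in the labeling centered at $v$; the hypothesis $v' \in T_{i,v}$ then forces $v'$ to be the other endpoint of $E_i$, so $E_i = [v, v']$. The analogous computation centered at $v'$ with index $3-i$ identifies the attaching edge of $T_{3-i,v'}$ as the same edge $E = [v, v']$.

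For the second fact, I would compare the two triangles through the transition function between charts. The charts $\Phi_{v, v'}$ and $\Phi_{v', v}$ both contain $P$ and agree on it; their transition function $\Phi_{v, v'} \circ \Phi^{-1}_{v', v}$ fixes $E$ pointwise (since it is the identity on $P$, and $E \subset \partial P$) and is conjugate to $\begin{pmatrix} 1 & 1 \\ 0 & 1 \end{pmatrix}$ on the half space across $E$, with invariant direction along $E$. Using the normal form $\rho^\star$, the mutation formula of Lemma~\ref{lem:P2_mutations}, and the rescaling $S_{P,E_i,E_{3-i}}$, I would compute the third vertex of each Euclidean triangle explicitly, and then verify by direct calculation that this monodromy transports the third vertex of $T_{3-i,v'}$ (viewed in the $\Phi_{v', v}$-chart) onto the third vertex of $T_{i,v}$ (viewed in the $\Phi_{v, v'}$-chart).

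The main technical subtlety is the binary choice of the factor direction $u \in w^\perp$ in the mutation formula: for the $v$-centering one must pick $u$ so that $v + u$ is the appropriate endpoint of the mutated edge $E$, while the $v'$-centering forces the opposite choice. Because the invariant direction of the monodromy is along $E$ and hence parallel to $u$, the monodromy automatically identifies the two distinct Euclidean third vertices once these sign conventions are made consistent, yielding the equality $T_{i,v} = T_{3-i,v'}$ as subsets of $\breve{B}$.
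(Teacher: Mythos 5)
Your proposal is correct and follows essentially the same route as the paper: both reduce the claim to the fact that the transition function across the edge $\conv{v,v'}$ (identity on $P$, a unipotent shear on the opposite half-space with invariant direction along that edge) carries the Euclidean mutation triangle computed with centre $v$ onto the one computed with centre $v'$. The only cosmetic difference is that you propose comparing third vertices directly while the paper compares the direction of the new edge using the normal-vector formula \eqref{eq:PL_map}; the deferred computation does check out (for $v=(1,0)$, $v'=(0,1)$ the apexes $(2,1/2)$ and $(1/2,2)$ are exchanged by the shear), and your remark about making the sign/labelling conventions consistent is exactly the paper's closing caveat that the index $3-i$ holds once the binary choices in each $\Diag_1(P,v)$ are fixed compatibly.
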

\begin{proof}
	Consider the chart $U_{v,v'}$ of $\breve{B}$. The triangle $T_{i,v}$ is formed by an edge $E_1$ of $P$, the continuation $E_2$ of the other edge of $P$ which meets $v$, and a third edge $E_3$. This is illustrated in Figure~\ref{fig:region_2}. Moreover let $E'_3$ denote the edge of $P$ disjoint from $v$. Without loss of generality we may assume that -- in a chart on $\breve{B}$ -- $v = (1,0)$, and $v' = (0,1)$. Applying the formula given in \eqref{eq:PL_map}, the direction of the edge $E_3$ is obtained from the direction of $E'_3$ by applying the linear transformation with matrix
	$\begin{pmatrix}
	0 & 1 \\
	-1 & 0
	\end{pmatrix}$. That is, by applying the linear transformation which fixes the linear subspace parallel to $E_1$ and sends $(0,1)^T \mapsto (1,0)^T$. The same linear map appears as the restriction of the transition function $\Phi_{v',v}\circ\Phi^{-1}_{v,v'}$ to the connected component of  $U_{v,v'}\cap U_{v',v}$ which contains the interior of $T_{v,i}$. Thus the direction of $E_3$ in the chart $U_{v',v}$ is same as the direction of the edge $E'_3$. Thus $E_3$ and $E_1$ are both edges of the triangle $T_{v',j}$ for some $j \in \{1,2\}$. The index $j$ depends on the binary choice of edges $E_1$ and $E_2$ in the construction of each $\Diag_1(P,v)$, but these choices can be made so that $j = 3-i$ for every $v$ and $i$.
\end{proof}

\begin{remark}
	Note that the set $\cT$ is partially ordered, and in fact there is an order preserving bijection between $\cT$ and the trivalent graph $\Markov$; hence $\cT$ is a graded meet-semilattice. We let $\fu \wedge \fu'$ denote the infimum of elements $\fu$ and $\fu'$ of $\cT$.
\end{remark}

Given an edge $E = \conv{v,v'}$ of $P$, let $\fu$ be the unique successor to $P$ in $\cT$ which contains $E$. Let $R_E$ be the image of $R_\fu$ -- defined in \S\ref{sec:gluing_diagrams} -- under $\Phi^{-1}_{v,v'}$. Moreover the subset $U_{v,v'}$ contains all of $R_E$, except for part of the edge $E$ of $P$, see Figure~\ref{fig:gluing_things}(a). Note that any point in $V$ is contained in $R_E$ for some edge $E$ of $P$. Moreover, given a pair of edges $E,E'$ of $P$, $R_E\cap R_{E'} \subset \V{P}$.

\begin{lem}
	\label{lem:edges_form_rays}
	The union of all edges of triangles in $\cT$ is a set of rays in $V$.
\end{lem}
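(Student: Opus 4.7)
The plan is to bootstrap from Lemma~\ref{lem:edges_in_region}, which already establishes the desired ray structure in the flat $\R^2$ setting for each $\Region(P,v)$, and then to verify compatibility across the gluings described in Lemma~\ref{lem:V_1}. So the argument has two stages: local (within a single chart of $\breve{B}$) and global (across chart changes).

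For the local stage I would fix a vertex $v\in\V{P}$ and a companion vertex $v'\in\V{P}\setminus\{v\}$. By construction, $\Phi_{v,v'}$ identifies the interior of $V_v$ with the support of $\Region(P,v)$ in $\R^2$, up to a pair of invariant rays emanating from the focus-focus singularities, and these invariant rays do not pass through the interior of any chamber of $\cT_v$. Lemma~\ref{lem:edges_in_region} then says that the edges of triangles in $\Region(P,v)\setminus\{P\}$ are cut out by a set of rays in $\R^2$; pulling back via the affine chart $\Phi_{v,v'}^{-1}$ transports these to rays in $V_v\subset\breve{B}$. This handles every chamber except those identified in Lemma~\ref{lem:V_1} between different $V_v$'s.

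For the global stage I would pass to the edges of the triangles $T_{i,v}$ that are shared under the identifications $T_{i,v}=T_{3-i,v'}$. Each such edge sits in both $V_v$ and $V_{v'}$, and admits an extension into each of the two sets as a ray by the local stage. The two extensions agree as subsets of $\breve{B}$ because the direction vectors transform consistently: the calculation at the end of the proof of Lemma~\ref{lem:V_1} shows that the transition function $\Phi_{v',v}\circ\Phi_{v,v'}^{-1}$, restricted to the relevant half-space, implements exactly the same linear map on tangent directions as polygon mutation at the edge shared between $P_{\fu}$ and $P_{\fu'}$, where $\fu=T_{i,v}$. Iterating this identification along the meet-semilattice structure of $\cT$, the local ray structures on the $V_v$ assemble into a single collection of rays in $V$.

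The main obstacle is the bookkeeping at the three focus-focus singularities: one must check that the rays we produce are never forced to cross an invariant line of the affine structure in a way that would conflict with monodromy. This is handled by the observation that the successors of $P$ in $\cT$ lie in the half-spaces $R_E$ described before Lemma~\ref{lem:edges_form_rays}, and that for each edge $E=\conv{v,v'}$ of $P$ the region $R_E$ is entirely contained in $\Ima\Phi_{v,v'}$ aside from (part of) $E$ itself. Thus every edge-extension can be realised within a single affine chart, and the compatibility provided by Lemma~\ref{lem:V_1} is sufficient to glue the resulting rays over the edges of $P$.
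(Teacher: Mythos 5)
Your proof follows essentially the same route as the paper's: both arguments pull back the ray structure furnished by Lemma~\ref{lem:edges_in_region} through the chart $\Phi_{v,v'}^{-1}$, use the fact that rays (minus their segment inside $P$) land in $R_E\subset\Ima\Phi_{v,v'}$, and then dispose of the apparent problem at the focus-focus singularities by noting that the edges of $P$ themselves are affine line segments in $\breve{B}$ since they meet $\Delta$ in the monodromy-invariant direction. Your ``global stage'' spells out a chart-compatibility check via Lemma~\ref{lem:V_1} that the paper leaves implicit, but this is an expansion of the same argument rather than a different one.
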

\begin{proof}
	By Lemma~\ref{lem:edges_in_region} we have that triangles $\fu \in \Region(P,v)\setminus\{P\}$ form a set of rays in the support of $\Region(P,v)$ in $\R^2$. Given such a ray $\d$, we have that $\Phi_{v,v'}(\Ima\d\setminus P) \subset R_E$ for some $E = \conv{v,v'}$ of $P$. Thus, possibly excluding a segment of an edge of $P$, $\Phi^{-1}_{v,v'}$ identifies $\d$ with a ray in $\breve{B}$. However, each edge of $P$ is also a line segment in $\breve{B}$ (meeting $\Delta \subset \breve{B}$ in a monodromy invariant direction). The process of extending rays by gluing triangles is illustrated in Figure~\ref{fig:gluing_things}.
\end{proof}

\begin{figure}[H]
	\makebox[\textwidth][c]{\includegraphics[scale=0.7]{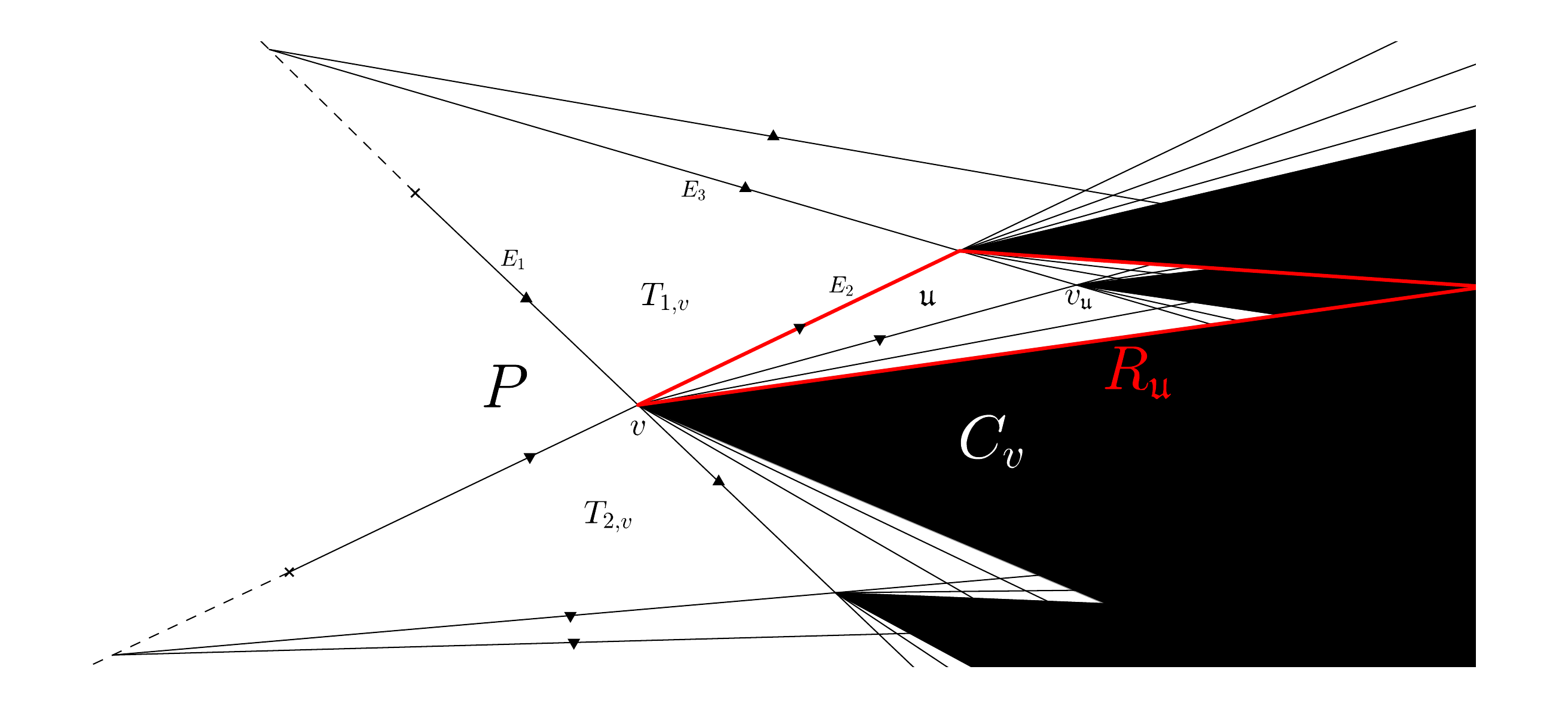}}%
	\caption{Triangles formed by the structure $\sS$.}
	\label{fig:region_2}
\end{figure}

\begin{remark}
	\label{rem:outgoing}
	For any $\fu \in \cT \setminus \{P\}$, let $v_\fu$ denote the vertex of $\fu \in \cT$ which is not contained in $\fu'$ for all $\fu' < \fu$, see Figure~\ref{fig:region_2}. Orienting the edges of $\Diag_1(P_\fu)$ for some $\fu \in \cT$ -- as shown in Figure~\ref{fig:region_2} -- we see that the vertex $v_\fu$, corresponding to the maximal value in the Markov triple associated to $P_\fu$, meets two \emph{incoming} rays (edges of $\fu$), and every other ray incident to $v$ is \emph{outgoing}.
\end{remark}

As well as the subspace $V \subset \breve{B}$, we define a subspace $W \subset \breve{B}$ which will correspond to the subset of $\breve{B}$ dense with rays. Let $J$ be the set of \emph{joints} in $V$: the set of points $p \in \breve{B}$ such that $p$ is a vertex of a triangle $\fu \in \cT$. For each $p \in J$ we have that either $p = v_\fu$ for some $\fu \in \cT \setminus\{P\}$, or $p \in \V{P}$. In either case we let $\bar{C}_p$ be the cone in $\R^2$ formed by the rays $\d^i_\infty$ for $i \in \{1,2\}$: the asymptotes which appear in the construction of $\Diag(P_\fu)$ (or $\Diag(P_\fu,p)$, in the case $P_\fu = P$). We define $C_p$ to be the cone in $\breve{B}$ based at $p$ which is identified with $\bar{C}_p$ by a chart on $\breve{B}$ which identifies -- up to a translation and scale -- $P_\fu$ with $\fu$. We set $W := \bigcup_{p \in J}{C_p}$. To simplify notation we also set $C_\fu := C_{v_\fu}$.

Observe that -- writing $\bar{\d}_E \colon [0,T) \to V$ for the extension of an edge $E$ in $V$ -- there is a ray $\d_E \colon [0,\infty) \to \breve{B}$ such that $\d_E|_{[0,T)} = \bar{\d}_E$. By Lemma~\ref{lem:slope_bound} the restriction of $\d_E$ to $[T,\infty)$ is contained inside $W \subset \breve{B}$ -- the `region dense with rays'. In fact, since Lemma~\ref{lem:V_W_disjoint} shows that the intersection of $V$ and $W$ consists of joints -- and since the tangent space to $W$ at such a vertex $v$ is a strictly convex cone -- we have that $\Ima\d_E \cap V = \Ima\bar{\d}_E$.

\begin{lem}
	\label{lem:V_W_disjoint}
	The intersection $V \cap W$ is equal to the set of vertices of triangles in $\cT$, that is, $V\cap W = J$. In particular the region $W$ does not intersect the interior of any triangle $\fu \in \cT$.
\end{lem}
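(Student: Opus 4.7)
The inclusion $J \subseteq V \cap W$ is immediate: every joint $p$ is a vertex of some chamber in $\cT$, so $p \in V$, and $p$ is the apex of the cone $C_p \subseteq W$.

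For $V \cap W \subseteq J$, I would take $x \in V \cap W$, pick a chamber $\fu \in \cT$ with $x \in \fu$ and a joint $p$ with $x \in C_p$, and aim to show $x = p$. The key structural input is Proposition~\ref{prop:no_overlap}, transferred to $\cT$ via the charts $\Phi_{v,v'}$ and Lemma~\ref{lem:V_1}: every chamber of $\cT$ that contains the point $p$ has $p$ as a vertex, and all such chambers form the fan $\{\fu^*\} \cup S_{\fu^*}(\Diag(P_{\fu^*}) \setminus \{P_{\fu^*}\})$ for the unique $\fu^* \in \cT$ with $v_{\fu^*} = p$ (taking $\fu^* = P$ and using the diagram $\Diag(P,p)$ when $p \in \V{P}$). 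I then split into two cases according to whether $p \in \V{\fu}$.

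If $p \in \V{\fu}$, then $\fu$ belongs to the fan, and its two edges at $p$ are consecutive rays $\d^i_j, \d^i_{j+1}$ of Definition~\ref{dfn:rays_from_edges} in the chart realising the diagram at $p$. By Lemma~\ref{lem:slope_bound} and Proposition~\ref{prop:better_bound}, these edges have slopes strictly outside the interval $(m^2_\infty, m^1_\infty)$ of asymptotic slopes bounding $C_p$. Since $\fu$ is convex with $p$ as a vertex, it lies in the closed angular complement of the open cone $C_p$, and so $\fu \cap C_p = \{p\}$; hence $x = p \in J$. If instead $p \notin \V{\fu}$, the same structural input gives $p \notin \fu$, and together with Case~1 the fan chambers at $p$ cover every angular direction at $p$ except $C_p^\circ$, so $C_p$ is disjoint from $V$ in a punctured neighborhood of $p$. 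To rule out intersections far from $p$, I would invoke Lemma~\ref{lem:containment}: the cone $C_p$ is contained in the descendant region $R_{\fu^*}$, while chambers outside the fan at $p$ either lie on the ancestor side of $\fu^*$ (separated from $C_p$ by $\fu^*$ itself) or belong to disjoint bounded regions $R_{\fu'}$ for some $\fu'$ incomparable to $\fu^*$ in the partial order on $\cT$. In either sub-case $\fu \cap C_p = \emptyset$, contradicting $x \in \fu \cap C_p$.

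The principal obstacle is the global disjointness in the second case: ruling out that a chamber not containing $p$ meets $C_p$ far from the apex. This will require carefully combining the local fan description at each joint with the nested containment of the bounded regions $R_\fu$ established in Section~\ref{sec:gluing_diagrams}, together with a careful tracking of the side of $\fu^*$ on which $C_p$ lies; the fact that all of this is visible simultaneously in the single chart $\Phi_{v,v'}$ covering a neighborhood of $\fu^*$ makes this manageable.
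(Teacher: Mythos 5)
Your proof structure is close to the paper's (both reduce to showing $\fu\cap C_p\subset\{p\}$ for all $\fu\in\cT$ and all joints $p$, and split on the relation between $\fu$ and the chamber $\fu^*$ with $p=v_{\fu^*}$), and your Case~1, where $p\in\V{\fu}$, matches the paper in spirit. But Case~2 contains two genuine gaps.

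First, the central geometric claim you invoke --- ``the cone $C_p$ is contained in the descendant region $R_{\fu^*}$'' --- is false: $C_p$ is an unbounded cone, while $R_{\fu^*}$ is \emph{bounded} by Lemma~\ref{lem:containment}. The correct input goes in the opposite direction: for each successor $\fu''$ of $\fu^*$ one has $C_p\cap R_{\fu''}=\{p\}$, because $R_{\fu''}$ is bounded in part by one of the asymptote rays $\d^i_\infty$ that form $\partial C_p$ and lies on the far side of it. Your claim, if it were true, would actually \emph{undermine} the argument, since descendants of $\fu^*$ lie in $R_{\fu^*}$ and one would then worry that $C_p$ runs through them.

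Second, your classification of the chambers not containing $p$ --- ``either on the ancestor side of $\fu^*$ $\ldots$ or in disjoint regions $R_{\fu'}$ for $\fu'$ incomparable to $\fu^*$'' --- omits the chambers $\fu>\fu^*$ that do not contain $p$ (i.e.\ descendants of $\fu^*$ outside the fan at $p$). Such $\fu$ exist: from each fan chamber $S_{\fu^*}(\fu^i_j(P_{\fu^*}))$ one successor leaves $p$, and everything beyond it is a descendant of $\fu^*$ not incident to $p$. These are exactly the chambers handled by the disjointness fact above: each lies in some $R_{\fu''}$ with $\fu''$ a successor of $\fu^*$, and $C_p\cap R_{\fu''}=\{p\}$. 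The paper's proof makes precisely this move (its ``$\fu'<\fu$'' case), treats ancestors via the chart of $P_\fu$, and reduces the incomparable case to the comparable ones by iterating through the vertices $x_1,x_2$ of $\fu^*$ using Proposition~\ref{prop:better_bound}; your sketch of the ancestor and incomparable subcases is essentially compatible with that, but the descendant case needs to be added and the bounded/unbounded containment corrected before the proof closes.
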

\begin{proof}
	Fixing an arbitrary vertex $v$ of a triangle in $\cT$, and an arbitrary triangle $\fu \in \cT$, the result follows from the claim that $\fu \cap C_v \subset \{v\}$. Note that the vertex $v$ is either a vertex of $P$, or equal to $v_{\fu'}$ for some $\fu' \in \cT$; if $v \in \V{P}$ we set $\fu' := P$.
	
	We first consider the claim in the case that $\fu$ and $\fu'$ are comparable. Note that -- up to segments of edges of $P$ -- $\fu$ and $\fu'$ are contained in some chart $U_{v_1,v_2}$ of $\breve{B}$. Assuming that $\fu' > \fu$ the cone $C_v$ is contained in the cone $C$ formed by extending the edges of $\fu$ incident to $v_\fu$ (or, if $\fu = P$, incident to a vertex of $P$). Equivalently, replacing $P$ with $P_{\fu}$ in Figure~\ref{fig:better_bound}, the cone $C_v$ is contained in the positive quadrant. Hence we have that $C_v \cap \fu \subset \{v\}$. If instead $\fu' < \fu$, then $\fu \subset R_{\fu''}$ for some successor $\fu''$ of $\fu'$. However, it follows from the definition of $R_{\fu''}$ that $C_{v} \cap R_{\fu''} = \{v\}$.

	Consider the case in which $\fu$ and $\fu'$ are incomparable. Let $x_1$ and $x_2$ be the vertices of $\fu'$ different from $v$. It follows from Proposition~\ref{prop:better_bound} that $C_{v} \setminus R_{\fu'} \subset C_{x_1}\cup C_{x_2}$. Since $\fu'$ is not greater than $\fu$, $\fu \cap R_{\fu'}$ is contained in a single point in the boundary of $R_{\fu'}$ (disjoint from $C_{v}$). Thus it is sufficient to show that $C_{x_i} \cap \fu \subset \{x_i\}$ for each $i \in \{1,2\}$. However, for each $i \in \{1,2\}$, either $C_{x_i} = C_{\fu_i}$ for some $\fu_i < \fu'$, or $x_i \in P < \fu'$. Thus -- iterating this process -- we reduce to the case in which the regions $\fu$ and $\fu'$ are comparable, which we have already shown.
\end{proof}

In Proposition~\ref{prop:no_overlap} we have shown that the triangles in $\cT$ form the chambers of a polyhedral decomposition. We now show that these regions are exactly the chambers defined by the structure $\sS$ described in Construction~\ref{cons:support}. We show that every ray which intersects the interior of the region $V$ is a ray $\d_E$ for an edge $E$ of a triangle in $\cT$.

\begin{lem}
	\label{lem:in_W}
	Fix a vertex $v_i$ of a triangle $\cT$ for each $i \in \{1,2\}$, and a ray $\d_i \subset W$ such that $\d_i \subset C_{v_i}$. Any ray based at the intersection of $\d_1$ and $\d_2$, whose tangent direction is a non-negative linear combination of the direction vectors of $\d_1$ and $\d_2$, is a subset of $W$. 
\end{lem}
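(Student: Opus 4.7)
My plan is to exhibit a single joint $v^\star \in J$ such that both the intersection point $p := \d_1 \cap \d_2$ and every ray based at $p$ with tangent direction in the positive cone spanned by the direction vectors of $\d_1$ and $\d_2$ lie inside $C_{v^\star}$. Since $C_{v^\star} \subseteq W$ by definition, this immediately gives the result.

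First I would observe that $p \in C_{v_1} \cap C_{v_2}$, since $\d_i \subset C_{v_i}$ by hypothesis. Since each $C_{v_i}$ is (the image under a chart of) a convex cone in $\R^2$, and $\d_i \subset C_{v_i}$, the tangent vector to $\d_i$ at $p$ lies in the tangent cone of $C_{v_i}$ at $p$.

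The next step is to locate a common \emph{ancestor} $v^\star$ using the meet-semilattice structure on $\cT$. Let $\fu_i \in \cT$ be the unique triangle with $v_i = v_{\fu_i}$, using the convention $\fu_i = P$ if $v_i \in \V{P}$. Set $\fu^\star := \fu_1 \wedge \fu_2$ and let $v^\star := v_{\fu^\star}$, or choose $v^\star \in \V{P}$ appropriately when $\fu^\star = P$. I would then prove, by induction on the depth difference $d(\fu_i) - d(\fu^\star)$, that $C_{v_i} \subseteq C_{v^\star}$. The inductive step is a direct consequence of Proposition~\ref{prop:better_bound}: each successor $\fu'$ of a triangle $\fu$ has its bounding asymptotic rays $\d^1_\infty, \d^2_\infty$ strictly between the analogous rays for $\fu$, so the cone attached to $v_{\fu'}$ is nested inside the cone attached to $v_\fu$. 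With this nesting, $p \in C_{v^\star}$ and both tangent directions to the $\d_i$ at $p$ lie in the tangent cone of $C_{v^\star}$ at $p$; convexity of $C_{v^\star}$ then implies that every non-negative linear combination of these directions produces a ray based at $p$ that remains inside $C_{v^\star}$.

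The main technical obstacle I foresee is handling the nesting argument across the focus-focus singularities of $\breve{B}$. Within a single chart $U_{v,v'}$ the cones $C_v$ are literal convex cones in $\R^2$ and the argument is straightforward, but when $v_1$, $v_2$, $p$, and $v^\star$ are distributed across distinct charts one must invoke the explicit transition functions from \S\ref{sec:aff_mfld} to see that the inclusions $C_{v_i} \subseteq C_{v^\star}$ and the convexity conclusion survive. I expect this to reduce to the observation -- used in the definition of $\cT$ and exploited in Lemma~\ref{lem:V_1} -- that the focus-focus monodromy acts trivially on the edges of $P$ that the relevant triangles share, so that up to segments of edges of $P$ the whole configuration can be drawn in a single chart.
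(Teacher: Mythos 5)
There is a genuine gap at the heart of your argument: the claimed nesting $C_{v_i} \subseteq C_{v^\star}$ for a successor is false, and it is not what Proposition~\ref{prop:better_bound} gives. If $\fu'$ is a successor of $\fu$, the apex $v_{\fu'}$ of $C_{v_{\fu'}}$ is a vertex of the triangle $\fu' \in \cT$, and such vertices never lie in $C_{v_\fu}$ (this is exactly the content of Lemma~\ref{lem:V_W_disjoint}: triangles of $\cT$ meet cones only in their apexes). Concretely, for $P$ the $(1,1,1)$ triangle and $\fu' = \fu^1_1$, the vertex $v_{\fu'}$ lies on one of the edge-rays of the fan of triangles of $\Diag(P)$ around $v$, strictly outside the cone $C_v$ spanned by the asymptotes; since $v_{\fu'} \in C_{v_{\fu'}} \setminus C_{v_\fu}$, the inclusion $C_{v_{\fu'}} \subseteq C_{v_\fu}$ fails for every successor, not just in edge cases. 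Proposition~\ref{prop:better_bound} (together with Lemma~\ref{lem:containment}) controls the slopes of the bounding rays $\fr^i$ and the nesting of the \emph{bounded} regions $R_{\fu'} \subseteq R_\fu$; containment of spanning directions for cones with \emph{different} apexes does not give containment of the cones, so your inductive step does not follow from it.

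What is actually true, and what the paper's proof uses, is the weaker statement that any ray contained in $C_{\fu'}$ has a \emph{tail} $\d((T,\infty))$ contained in $C_\fu$; by induction the tails of $\d_1$ and $\d_2$ lie in $C_{\fu^\star}$ with $\fu^\star = \fu_1 \wedge \fu_2$. But this alone does not place the intersection point $p = \d_1 \cap \d_2$ in $C_{\fu^\star}$: one needs the additional topological step that $R_{\fu^\star} \setminus (\fu^\star \cup C_{\fu^\star})$ has two connected components containing $v_1$ and $v_2$ respectively, so two rays starting in different components whose tails lie in $C_{\fu^\star}$ can only meet inside $C_{\fu^\star}$. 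Your final step (convexity of the cone plus the directions lying in its recession cone) is fine and matches the paper, and your remarks about working in a single chart $U_{v,v'}$ are in line with the paper's opening reduction via $R_{E_1}, R_{E_2}$; but as written the middle of the argument rests on a false inclusion, and repairing it essentially requires the tail-plus-separation argument above.
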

\begin{proof}
	Note that $v_i \in R_{E_i}$ for some edge $E_i$ of $P$ for $i \in \{1,2\}$. Let $v$ be a vertex shared by $E_1$ and $E_2$, and note that -- away from $E_1 \cup E_2$ --  $R_{E_1}$ and $R_{E_2}$ are contained in $U_{v,v'}$ for either choice of vertex $v' \neq v$ of $P$. The sets $C_{v_i}\setminus\{v_i\}$ -- and hence $\d_1(0,\infty)$ and $\d_2(0,\infty)$ -- are also contained in the domain of this chart.

	Let $\fu := \fu_1 \wedge \fu_2$ and observe that -- by Proposition~\ref{prop:better_bound} -- if $\fu'$ is a successor to $\fu$ in the partial order on $\cT$ then, for any ray $\d$ whose image is contained in $C_{\fu'}$, there is a $T \in \R_{\geq 0}$ such that $\d((T,\infty)) \subset C_{\fu}$. Note that if $\fu = P$, $C_\fu$ is not defined, but in this case the vertex $v$ is unique and we set $C_\fu := C_v$. By induction there exist $T_1,T_2 \in \R_{\geq 0}$ such that $\d_i((T_i,\infty)) \subset C_\fu$ for each $i \in \{1,2\}$. Since $R_\fu \setminus (\fu \cup C_\fu)$ consists of two connected components, and $v_1$ and $v_2$ are contained in different components, we have that $\d_1\cap\d_2 \subset C_\fu$. However -- as $C_\fu$ is convex -- if $\d_i((T_i,\infty)) \subset C_\fu$ for each $i \in \{1,2\}$ then any positive linear combination of points in $\d_1((T_1,\infty)) \cup \d_2((T_2,\infty))$ is contained in $C_\fu$.
\end{proof}

\begin{prop}
	\label{prop:rays_are_edges}
	The set of rays in $\sS$ with one-dimensional intersection with $V$ is equal to the set of rays $\d_E$ where $E$ ranges over the edges of the triangular regions $\fu \in \Region$.
\end{prop}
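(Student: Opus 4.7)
The plan is to argue by induction on the step $k$ in Construction~\ref{cons:support}, maintaining two claims in parallel: (i) every edge $E$ of a triangle in $\cT$ extends, through the chart that contains it, to a ray appearing in $\Rays_k$ for all sufficiently large $k$; and (ii) every ray in $\Rays_k$ whose image meets $V$ in a set of positive $1$-dimensional measure is of the form $\d_E$ for some edge $E$ of a triangle in $\cT$. The base case $k=1$ is immediate, since $\Rays_1$ consists of the six rays emanating from $\Delta$ parallel to the edges of $P$, and these are precisely the extensions of the edges of $P$.

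For the inductive step, a new ray arises at a joint $v$ where two rays $\d_1,\d_2 \in \Rays_k$ cross, the new rays being added as $A_v(\d)$ for $\d \in \fD(s)$, where $s$ is the absolute wedge of the direction vectors of $\d_1$ and $\d_2$. Lemma~\ref{lem:V_W_disjoint} forces $v$ either to lie in $J$ or in $W\setminus V$, and I would treat these cases separately. In the first case $v$ is either a vertex of $P$ or equals $v_\fu$ for a unique $\fu \in \cT$, and the inductive hypothesis identifies $\d_1$ and $\d_2$ with the two edges of $\fu$ (or of $P$) meeting $v$. Via a chart $\Phi_{v,v'}$ together with the affine map $S_\fu$, the local picture at $v$ is identified with the standard scattering diagram $\fD(s)$ at the corresponding vertex of $\rho^\star(P_\fu)$ from Section~\ref{sec:diagrams}. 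Proposition~\ref{prop:rays_and_triangles} then matches those rays of $\fD(s)$ with slope outside the interval $[\tfrac{s-\sqrt{s^2-4}}{2},\tfrac{s+\sqrt{s^2-4}}{2}]$ with the rays $\d^i_j$ of the mutation recursion, i.e.\ with the edges of triangles in $\Diag(P_\fu)$ incident to the apex. Transported back to $\breve{B}$, these produce exactly the rays $\d_E$ for edges $E$ of triangles in $S_\fu(\Diag(P_\fu)) \subset \cT$ that are incident to $v$, establishing both the first and second claim at this joint. The remaining rays of $\fD(s)$, whose slopes lie strictly between the asymptotes, stay inside $C_v \subset W$ by Lemma~\ref{lem:slope_bound} (strengthened by Proposition~\ref{prop:better_bound} when $\fu \neq P$), so by $V \cap W = J$ they have only isolated intersection with $V$.

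In the second case, $v \in W\setminus V$, the joint sits in some $C_{p_1} \cap C_{p_2}$ with $p_1,p_2 \in J$, and the incoming rays $\d_1,\d_2$ lie in these respective cones near $v$. Lemma~\ref{lem:in_W} then forces every ray added at $v$ in a non-negative combination of the incoming directions to stay in $W$, and hence (again by Lemma~\ref{lem:V_W_disjoint}) to have only isolated intersection with $V$. Combining the two cases, at each step the only new rays with positive-dimensional intersection with $V$ are precisely those $\d_E$ accounted for by the scattering at joints already present in $J$, and every such $\d_E$ eventually appears; taking the union over $k$ yields the proposition.

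The main technical obstacle is the matching performed in Case~$v \in J$: identifying the abstract rays produced by the rank-$2$ scattering $\fD(s)$ with the concrete edges of the mutation diagram $\Diag(P_\fu)$. This is exactly the content of Proposition~\ref{prop:rays_and_triangles}, which rests on the cluster-theoretic description of $\fD(s)$ from \cite{GHKK}. Once that identification is in place, the clean dichotomy between $\d_E$-rays and rays confined to $W$ is provided by the slope bounds of Lemma~\ref{lem:slope_bound} and Proposition~\ref{prop:better_bound}, and the rest of the argument is essentially bookkeeping.
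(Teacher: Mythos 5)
Your proposal is correct and follows essentially the same route as the paper: identify the scattering at each joint of $V$ with the standard diagram $\fD(s)$ via Proposition~\ref{prop:rays_and_triangles}, so that the rays outside the asymptotic cone are exactly the edge-rays $\d_E$ of $S_\fu(\Diag(P_\fu))$, and then use the slope bounds together with Lemma~\ref{lem:V_W_disjoint} and Lemma~\ref{lem:in_W} to confine every other ray (and everything generated at intersections inside $W$) to the cones $C_v \subset W$, hence away from the interior of $V$. Your packaging of this as an induction over the steps of Construction~\ref{cons:support} is only a presentational difference from the paper's argument.
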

\begin{proof}
	Consider an intersection point $v$ between rays $\d_E$ and $\d_{E'}$ in $V$. By Proposition~\ref{prop:no_overlap} there is a triangle $\fu \in \cT$ such that the elements of $\cT$ incident to $v$ are precisely the images of the triangles $S_\fu(\Diag(P_\fu))$ in $\breve{B}$. By Proposition~\ref{prop:rays_and_triangles}, the tangent directions to edges incident to $v$ generate rays of the scattering diagram. Thus we have that $\cT$ is defined by a collection of scattering diagrams in $\breve{B}$. It remains to check that these are the only rays in the compatible structure $\sS$ which intersect the interior of $V$.
	
	By Lemma~\ref{lem:edges_form_rays} and the following discussion we have that rays formed by prolonging edges of chambers in $\cT$ enter a cone $C_\fu \subset W$ for some $\fu \in \cT$ and never re-emerge from $C_\fu$. Note that every ray generated at a vertex $v$ of a triangle in $\cT$ is either is contained in $C_v$, or is a ray $\d_E$ for an edge $E$ of a triangle in $\cT$. Thus any intersection $x$ between rays generated at a vertex $v$ which occurs inside $W$ satisfies the hypotheses of Lemma~\ref{lem:in_W}, and hence all rays generated by scattering diagram at $x$ are contained in a cone $C_v$ for some vertex $v$ of a triangle in $\cT$. Similarly any rays generated at an intersection point of rays generated in $W$ satisfy the conditions of Lemma~\ref{lem:in_W}, and hence all rays in the structure $\sS$ are either of the form $\d_E$, or are disjoint from the interior of $V$.
\end{proof}

The first two points of the statement of Theorem~\ref{thm:superpotential} now follow from Lemma~\ref{lem:edges_form_rays} and Proposition~\ref{prop:rays_are_edges}. Indeed, recall from \cite[Definition~$6.24$]{TropGeom} that the polyhedral $\sP_k$ associated to $\sS[k]$ must satisfy the following:
\begin{enumerate}
	\item Elements of $\sP_k$ must be rational polyhedra with rational vertices.
	\item A sufficiently long initial segment of $\d$ must be a union of edges of $\sP_k$.
	\item Any point of intersection of (non-parallel) rays in $\sS[k-1]$ must be a vertex of $\sP_k$.
\end{enumerate}

Note that the $k$ which appears in this definition is not related to the $k$ which appears in the constructions in \S\ref{sec:diagrams} or \S\ref{sec:gluing_diagrams}. Let $\sP_0$ be the decomposition of $\breve{B}$ shown in Figure~\ref{fig:initial_lines} which decomposes $\breve{B}$ into the region $P$ and three non-compact regions with parallel edges. For any $k$ the set $\sS[k]$ is finite, hence there is a $K(k) \in \Z_{>0}$ such that any intersection point between rays in $\sS[k-1]$ is a vertex $v_\fu$ for some $\fu \in \cT$ such that $d(\fu) < K(k)$. Let $V_k$ -- the subset of $\breve{B}$ appearing in the statement of Theorem~\ref{thm:superpotential} -- be the union of chambers $\fu \in \cT$ such that $d(\fu) \leq K(k)$.

We define $\sP_k$ to be any choice of extension of the decomposition given by $\{\fu \in \cT : d(\fu) \leq K(k)\}$ which meets three conditions specified above. The third point in Theorem~\ref{thm:superpotential} follows from Proposition~\ref{pro:dense_complement}.

\begin{prop}
	\label{pro:dense_complement}
	We have that $\cl(W) \cup V = \breve{B}$. Equivalently, assuming Conjecture~\ref{conj:dense_with_rays}, rays in the structure $\sS$ are dense in the complement of $V$.
\end{prop}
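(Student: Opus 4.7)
The plan is to prove that every $x \in \breve{B} \setminus V$ lies in $\cl(W)$; the equivalent density statement for rays then follows from Conjecture~\ref{conj:dense_with_rays} together with Proposition~\ref{prop:rays_are_edges}, since any ray in $\sS$ not already of the form $\d_E$ is a scattering ray based at a joint of $\cT$ and hence supported in a cone $C_\fu \subset W$.

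Fix $x \in \breve{B} \setminus V$. I would first construct an infinite descending chain $P = \fu_0 < \fu_1 < \cdots$ in the meet-semilattice $\cT$ such that $x \in R_{\fu_n}$ for every $n$. The basic input is the identity $R_\fu = \fu \cup \bigcup_{\fu' \succ \fu} R_{\fu'}$ where $\fu'$ ranges over the (at most two) immediate successors of $\fu$; this is implicit in Lemma~\ref{lem:containment} and the proof of Proposition~\ref{prop:no_overlap}. Starting from $\fu_0 = P$, the condition $x \notin \fu_n$ supplies the next link $\fu_{n+1}$, possibly after choosing an appropriate chart near $\V{P}$.

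I then split into two cases according to whether the cones $C_{\fu_n}$ eventually capture $x$. If $x \in C_{\fu_n}$ for some $n$, then $x \in W$ and the conclusion is immediate. Otherwise $x \in R_{\fu_n} \setminus C_{\fu_n}$ for all $n$, and I would argue that the vertex sequence $(v_{\fu_n})$, which lies in $\partial C_{\fu_n} \subset \cl(W)$, converges to $x$. If the chain traverses both possible successors infinitely often, the quantitative estimate $\lambda \leq \tfrac{1}{3K-1}$ noted after Proposition~\ref{prop:better_bound}, where $K$ denotes the minimum entry of the Markov triple attached to $P_{\fu_n}$, forces the diameters of $R_{\fu_n}$ to shrink to zero and hence $v_{\fu_n} \to x$. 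If instead the chain eventually chooses the same successor at every step, the nested regions $R_{\fu_n}$ accumulate along one of the asymptotes $\d^i_\infty$ appearing in an earlier scattering, which is contained in $C_{\fu_m} \subset W$ for all sufficiently small $m$; in this case $x$ itself lies on such an asymptote and hence already in $W$.

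The main obstacle is the geometric convergence step in the alternating case: pinning down that $K \to \infty$ along any genuinely non-stabilising descending path. I expect this to reduce to a purely combinatorial analysis of the Markov mutation $(a,b,c) \mapsto (b,c,3bc-a)$, showing that the minimum entry of $\ba_{\fu_n}$ is unbounded unless the path eventually always mutates on one fixed side, which is exactly the asymptote case already handled. Once this combinatorial input is in place, combining the two cases produces a sequence in $W$ converging to $x$, completing the proof.
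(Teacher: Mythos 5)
Your strategy is essentially the paper's: trap a point $x \in \breve{B}\setminus (V\cup W)$ in a nested chain of regions $R_{\fu_n}$ and split according to whether the Markov data along the chain stabilises. Two points need attention. First, the ``basic input'' identity $R_\fu = \fu \cup \bigcup_{\fu' \succ \fu} R_{\fu'}$ is false as stated: the cone $C_{v_\fu}\cap R_\fu$ is covered by neither successor region (already for $\fu = P$, the region between the asymptotes $\d^1_\infty$ and $\d^2_\infty$ based at $v_3$ lies outside $R_{\fu^1_1}\cup R_{\fu^2_1}$). The correct statement, which the paper uses, is that $R_\fu\setminus(\fu\cup C_\fu)$ has two connected components indexed by the successors; your separate case ``$x\in C_{\fu_n}$ for some $n$'' silently restores the missing piece, so the chain construction survives, but the identity you attribute to Lemma~\ref{lem:containment} and Proposition~\ref{prop:no_overlap} is not what they give. (Also note that, by Lemma~\ref{lem:gluing_chambers}, staying inside a single diagram corresponds to \emph{alternating} the successor index, so the intrinsic dichotomy is ``one edge eventually fixed forever'' versus ``every edge eventually mutated'', i.e.\ minimal Markov entry bounded versus unbounded.)

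The genuine gap is the quantitative convergence step, and you have mislocated where the difficulty sits. The combinatorial input you defer --- that the minimum entry of $\ba_{\fu_n}$ is unbounded unless the chain eventually fixes one edge --- is immediate: the minimum is non-decreasing under Markov mutation, so if bounded it is eventually constant, which forces the mutation to avoid that entry from some point on. What does \emph{not} follow from $\lambda \le \tfrac{1}{3K-1}$ alone is that $\operatorname{diam} R_{\fu_n}\to 0$: the factor $\lambda$ rescales the normalised polygon $P_{\fu_{n+1}}$, whose lattice edge lengths blow up, and the edge shared with $\fu_n$ is not contracted at that step, so the relevant edge-length sequences $e(i)$, $f(i)$ are not monotone. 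The paper's proof needs the uniform bound $L$ (from boundedness of $R_{\fu_1}$), the relation $e(i+1)\le f(i)/(3K-1)$, and a triangle-inequality summation yielding the tail bound $2L/(3K-2)$ before it can conclude $f(i)\to 0$; some argument of this kind is required in your alternating case before you may assert $v_{\fu_n}\to x$. Likewise, in the stabilising case the claim that ``$x$ itself lies on such an asymptote'' is not free: it rests on the decay $f(i+1)\le f(i)/2$ (available because $\tilde F(i)$ is eventually constant) together with the thin-region argument --- the regions $X_i$ have two sides in $W$ and a third side shorter than $f(i)$ by Remark~\ref{rem:small_seg} --- which is what pins $x$ into $\cl(W)$. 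Your conclusions in both cases are correct in substance, but the analytic core of the paper's proof is asserted rather than supplied.
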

\begin{proof}
	Let $x$ be a point in the complement of $V \cup W$ in $\breve{B}$; we will show that $x \in \cl(W)$. First note that $x \in R_E$ for some edge $E = \conv{v,v'}$ of $P$. We construct a sequence in $\cT$ associated to $x$. Let $\fu_1 \in \cT$ be the triangle, different from $P$, which contains $E$. Note that $x \notin \fu$, and that $R_{\fu_1} \setminus (C_{\fu_1} \cup \fu_1)$ consists of two connected components which are in bijection with the successors of $\fu_1$ in $\cT$. Let $\fu_2$ be the triangle corresponding to the connected component containing $x$. Iterating this process we obtain a monotone sequence $(\fu_i)^\infty_{i=1} \subset \cT$. Let $\ba_i$ denote the Markov triple corresponding to $\fu_i$ for each $i$.
	
	\begin{figure}
		\includegraphics[scale=0.8]{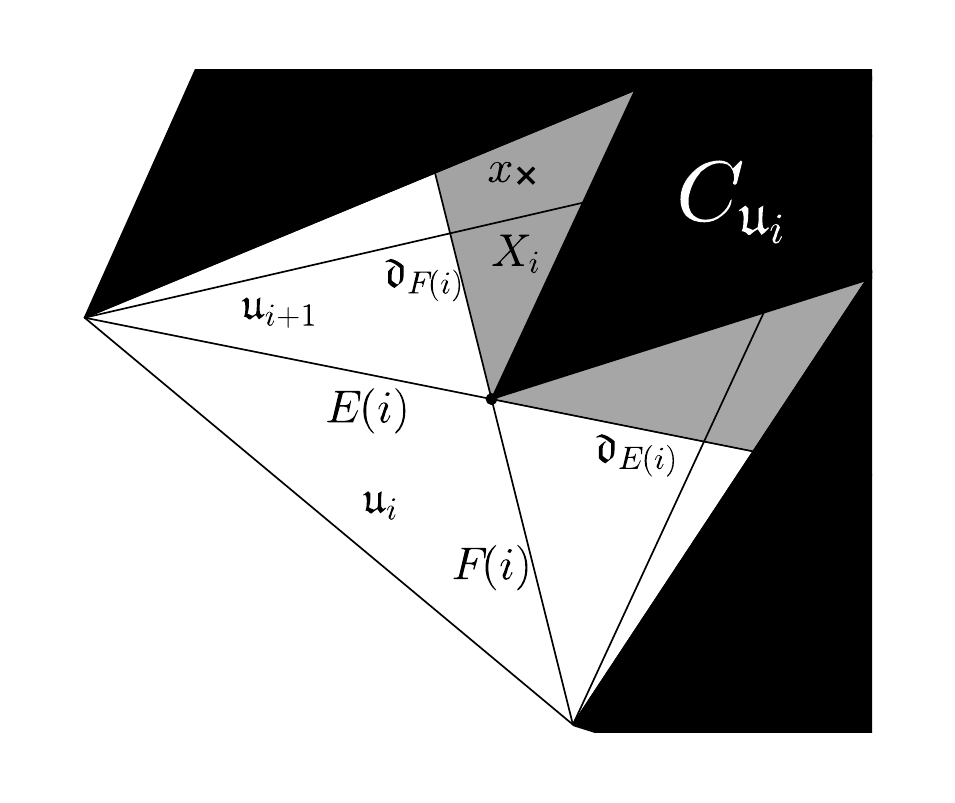}%
		\caption{Regions containing the point $x$.}
		\label{fig:vanishing}
	\end{figure}
	
	For each $i \in \Z_{> 0}$, let $E(i)$ be the edge of $\fu_i$ shared with $\fu_{i+1}$, and let $F(i)$ be the edge of $\fu_i$ containing $v_{\fu_i}$ and different from $E(i)$. Let $e(i)$ and $f(i)$ denote the lengths of these edges, and let $\tilde{E}(i)$ and $\tilde{F}(i)$ denote the corresponding edges of $P_{\fu_i}$. Let $\Gamma$ be the cone formed at $v_{\fu_i}$ by the rays $\d_{E(i)}$ and $\d_{F(i)}$. Note that $x \in \Gamma$, since $R_{\fu_i} \setminus \Gamma \subset V$. Let $X_i$ be the connected component of $(C \cap R_{\fu_i}) \setminus C_{\fu_i}$ which contains $x$, see Figure~\ref{fig:vanishing}. Observe that two edges of $X_i$ are contained in $W$, the remaining edge of $X_i$ extends $F(i)$, and that $X_i \cap \fu_{i+1} = F(i+1)$. It follows from Remark~\ref{rem:small_seg} that the edge of $X_i$ extending $F(i)$ is shorter than $F(i)$. Hence it suffices to show that $f(i) \to 0$ as $i \to 0$.

	First consider the case that the minimal entry in $\ba_i$ is bounded above as $i \to \infty$. In this case the edge $\tilde{F}(i)$ of $P_{\fu_i}$ corresponding to $F(i)$ is eventually constant. Using the bound on $\lambda$ which appears in the proof of Proposition~\ref{prop:better_bound}, we have that $f(i+1) \leq f(i)/2$ for all sufficiently large $i$, hence $f(i)\to 0$. Assume instead that the minimal entry in $\ba_i$ is unbounded, and -- fixing a positive integer $K$ -- choose $i \in \Z_{> 0}$ such that $\tilde{E}(i+1) = \tilde{F}(i)$ and all entries in $\ba_i$ are bounded below by $K$. Observe that -- since $\fu_i$ and $\fu_{i+1}$ are contained the bounded region $R_{\fu_1}$ -- the lengths of their edges have a uniform upper bound $L \in \R$. From the discussion on page~$14$ we have that $e(i+1) \leq f(i)/(3K-1)$. Since one of $e(i+2)$ and $f(i+2)$ is bounded above by $f(i+1)/(3K-1)$, and $E(i+1)$ is an edge of $\fu_{i+2}$, we have that $e(i+2)$ and $f(i+2)$ are bounded above by $x := 2L/(3K-1)$ via the triangle inequality. The sequences $(e(j))^\infty_{j=i+2}$ and $(f(j))^\infty_{j=i+2}$ need not be decreasing, but we observe from repeated application of the triangle inequality that these sequences are bounded by 
	\[
	x\left(1+\frac{1}{3K-1} + \cdots + \frac{1}{(3K-1)^{(j-i-2)}}\right) \leq x\frac{3K-1}{3K-2} = \frac{2L}{3K-2}.
	\]
\end{proof}

This concludes the proof of Theorem~\ref{thm:superpotential}. In fact Theorem~\ref{thm:superpotential_2} is now an immediate consequence of this result and the results of \cite{CPS11}.

\begin{proof}[Proof of Theorem~\ref{thm:superpotential_2}]
	By Proposition~\ref{prop:rays_are_edges} we have that rays in $V$ are unions of edges of triangles $\fu \in \cT$. Moreover, by \cite[Example~$1.15$]{GHKK}, the function attached to each such ray is binomial and -- setting coefficients to be equal to $1$ -- we may assume this function is $1+z^m$. Comparing the formula in \cite{CPS11} for crossing a ray with (algebraic) mutation, we see that the tropical superpotential with basepoint in a triangle $\fu \in \cT$ is precisely the maximally mutable (see \cite{Overarching}) Laurent polynomial with Newton polytope $P_\fu$.
\end{proof}

\bibliographystyle{plain}
\bibliography{tropical_discs}
\end{document}